\definecolor{darkred}{rgb}{0.8,0.1,0.1}
\theoremstyle{plain}
\newtheorem{theo}{Theorem}[section]
\newtheorem{lem}[theo]{Lemma}
\newtheorem{propo}[theo]{Proposition}
\newtheorem{cor}[theo]{Corollary}
\theoremstyle{definition}
\newtheorem{defi}[theo]{Definition}
\newenvironment{ex}
  {\pushQED{\qed}\exx}
  {\popQED\endexx}
\newenvironment{rem}
  {\pushQED{\qed}\remm}
  {\popQED\endremm}
\numberwithin{equation}{section}
\def\nn{\nonumber}
\def\bbC{\mathbb{C}}
\def\Hom{\mathrm{Hom}}
\def\id{\mathrm{id}}
\def\ID{\mathrm{Id}}
\def\1{I}
\def\oone{\mathbbm{1}}
\def\op{\mathrm{op}}
\def\triv{\operatorname{triv}}
\def\Loc{\mathbf{Loc}}
\def\astObj{{\ast{\text{-}}\mathbf{Obj}}}
\def\Set{\mathbf{Set}}
\def\Alg{\mathbf{Alg}}
\def\Vec{\mathbf{Vec}}
\def\Ch{\mathbf{Ch}}
\def\Mon{\mathbf{Mon}}
\def\CMon{\mathbf{CMon}}
\def\astMon{{\ast{\text{-}}\mathbf{Mon}}}
\def\astAlg{{\ast{\text{-}}\mathbf{Alg}}}
\def\CC{\mathbf{C}}
\def\DD{\mathbf{D}}
\def\EE{\mathbf{E}}
\def\MM{\mathbf{M}}
\def\Cat{\mathbf{Cat}}
\def\OCat{\mathbf{OrthCat}}
\def\ICat{\mathbf{ICat}}
\def\MonCat{\mathbf{MCat}}
\def\SMonCat{\mathbf{SMCat}}
\def\IMonCat{\mathbf{IMCat}}
\def\ISMonCat{\mathbf{ISMCat}}
\def\sSet{\mathbf{sSet}}
\def\SymSeq{\mathbf{SymSeq}}
\def\Op{\mathbf{Op}}
\def\astOp{{\ast{\text{-}}}\mathbf{Op}}
\def\AAA{\mathfrak{A}}
\def\CCC{\mathfrak{C}}
\def\DDD{\mathfrak{D}}
\def\O{\mathcal{O}}
\def\P{\mathcal{P}}
\def\E{\mathcal{E}}
\newcommand\und[1]{\underline{#1}}
\newcommand\ovr[1]{\overline{#1}}
\DeclareMathOperator*{\Motimes}{\text{\raisebox{0.25ex}{\scalebox{0.8}{$\bigotimes$}}}}
\def\sk{\vspace{2mm}}
\let\@fnsymbol\@alph
\title{%
Involutive categories, colored $\ast$-operads and quantum field theory
}
\author{%
Marco Benini$^{1,a}$, 
Alexander Schenkel$^{2,b}$\ and\
Lukas Woike$^{1,c}$\vspace{4mm}\\
{\small ${}^1$ Fachbereich Mathematik, Universit\"at Hamburg,}\\
{\small Bundesstr.~55, 20146 Hamburg, Germany.}\vspace{3mm}\\
{\small ${}^2$ School of Mathematical Sciences, University of Nottingham,}\\
{\small University Park, Nottingham NG7 2RD, United Kingdom.}\vspace{5mm}\\
{\small \begin{tabular}{ll}
Email: & ${}^a$~\texttt{marco.benini@uni-hamburg.de}\\
& ${}^b$~\texttt{alexander.schenkel@nottingham.ac.uk}\\
& ${}^c$~\texttt{lukas.jannik.woike@uni-hamburg.de}\vspace{3mm}
\end{tabular}
}
}
\date{January 2019}
\begin{document}

\maketitle

\begin{abstract}
\noindent 
Involutive category theory provides a flexible framework to describe involutive structures on algebraic objects, such as anti-linear involutions on complex vector spaces. Motivated by the prominent role of involutions in quantum (field) theory, we develop the involutive analogs of colored operads and their algebras, named colored $\ast$-operads and $\ast$-algebras. Central to the definition of colored $\ast$-operads is the involutive monoidal category of symmetric sequences, which we obtain from a general product-exponential $2$-adjunction whose right adjoint forms involutive functor categories. For $\ast$-algebras over $\ast$-operads we obtain involutive analogs of the usual change of color and operad adjunctions. As an application, we turn the colored operads for algebraic quantum field theory into colored $\ast$-operads. The simplest instance is the associative $\ast$-operad, whose $\ast$-algebras are unital and associative $\ast$-algebras.
\end{abstract}

\vspace{2mm}

\paragraph*{Report no.:} ZMP-HH/18-6, Hamburger Beitr\"age zur Mathematik Nr.\ 725

\paragraph*{Keywords:} involutive categories, involutive monoidal categories, $\ast$-monoids, colored operads, $\ast$-algebras, algebraic quantum field theory

\paragraph*{MSC 2010:} 18Dxx, 81Txx

\tableofcontents




\section{\label{sec:intro}Introduction and summary} 
In ordinary category theory,  an involution on an object $c\in\CC$
of a category $\CC$ is an endomorphism $i : c\to c$ 
that squares to the identity, i.e.\ $i^2 =\id_c$. 
Unfortunately, this concept is too rigid to describe many examples
of interest. For instance, given an associative and unital $\ast$-algebra $A$
over $\bbC$, e.g.\ the algebra of observables of a quantum system, 
the involution $\ast : A \to A$ on its underlying vector space is {\em not} 
an endomorphism in the category of complex vector spaces, but rather 
a complex {\em anti-linear} map. 
\sk

Involutive categories \cite{BeggsMajid,Egger,Jacobs} were developed 
in order to introduce the flexibility required to resolve this insufficiency. 
Their definition is a particular instance of 
the ``microcosm principle'' of Baez and Dolan
\cite{BaezDolan}, which states that {\em certain algebraic structures can be 
defined in any category equipped with a categorified version of the same structure}.
Hence, an involutive category is a category $\CC$ equipped with
an endofunctor $J :\CC\to\CC$ that squares to the identity endofunctor
$\ID_\CC$, up to a given natural isomorphism $j : \ID_\CC \to J^2$ which has to
satisfy certain coherence conditions (cf.\ Definition \ref{def:invcat}).
In an involutive category $(\CC,J,j)$, one can introduce
a more flexible concept of involution on an object $c\in\CC$, which is
given by a $\CC$-morphism $\ast : c\to Jc$ satisfying
$(J\ast)~\ast = j_c$ as morphisms from $c$ to $J^2c$ (cf.\ Definition \ref{def:astobject}).
Such objects (homotopy fixed points, as a matter of fact) are called 
self-conjugates in \cite{Jacobs}, involutive objects in \cite{Egger} 
and $\ast$-objects in \cite{BeggsMajid}. We shall follow the latter terminology
because it seems the most natural one to us.
If a category is equipped with its trivial involutive structure $J = \ID_\CC$ and $j=\id_{\ID_\CC}$
(cf.\ Example \ref{ex:trivial}), then $\ast$-objects are just endomorphisms squaring to the identity, 
i.e.\ the ordinary involutions mentioned above. This framework, however,
becomes much richer and flexible by allowing for non-trivial involutive structures:
For example, endowing the category of complex vector spaces $\Vec_{\bbC}$
with the involutive structure given by the endofunctor that assigns to
a complex vector space $V$ its complex conjugate vector space $\ovr{V}$,
the complex anti-linear map underlying a $\ast$-algebra
may be regarded as a $\ast$-object $\ast : A\to\ovr{A}$ in this involutive category
(cf.\ Examples \ref{ex:vec} and \ref{ex:vec2}).
\sk

The observables of a quantum system form a unital and associative $\ast$-algebra over $\bbC$. 
This shows the relevance of involutive categories for general quantum theory, 
quantum field theory and also noncommutative geometry. 
Our main motivation for this paper stems precisely from these areas and more specifically from 
our recent operadic approach to algebraic quantum field theory
\cite{BeniniSchenkelWoike}. There the axioms of algebraic quantum field theory \cite{HaagKastler,Brunetti}
are encoded in a colored operad and generalized to richer target categories, 
such as chain complexes and other symmetric monoidal categories, 
which are central in modern approaches to quantum gauge theories 
\cite{CostelloGwilliam,BeniniSchenkelSzabo,BeniniSchenkel,BeniniSchenkelWoike,BeniniSchenkelWoikehomotopy,YauQFT}. 
For their physical interpretation, however, it is essential that 
quantum systems such as quantum field theories come equipped with involutions. 
These enable us to perform the GNS construction and recover the usual
probabilistic interpretation of quantum theory.
We refer to \cite{Jacobs} for a generalization of the GNS construction 
to involutive symmetric monoidal categories.
\sk
 
The purpose of this paper is to combine the theory of colored operads 
and that of involutive categories, resulting in what we shall call colored $\ast$-operads. 
Despite of our quite concrete motivation, we believe that
working out the theory of colored $\ast$-operads in full generality 
provides an interesting and valuable addition
to the largely unexplored field of involutive category theory. On the one hand, 
our constructions naturally lead to interesting new structures such as involutive 
functor categories, which have not been discussed in the literature. 
On the other hand, our study of involutive structures on the category of 
symmetric sequences, which is a monoidal category that does not admit a braiding,
provides an interesting example of an involutive monoidal
category in the sense of \cite{Jacobs}, but not in the sense of \cite{BeggsMajid,Egger},
see Remark \ref{rem:whynonrev} for details. 
This shows that Jacobs' definition of involutive monoidal categories 
is the one suitable to develop the theory of colored $\ast$-operads, 
consequently we shall use this one in our paper.
\sk

The outline of the paper is as follows:
Sections \ref{sec:invcat} and \ref{sec:invmoncat}
contain a brief review of involutive categories and 
involutive (symmetric) monoidal categories following
mostly \cite{Jacobs}. We shall in particular
emphasize and further develop the $2$-categorical
aspects of this theory, including the $2$-functorial
behavior of the assignments of the categories of
$\ast$-objects and $\ast$-monoids. For the sake of concreteness, 
we also describe the most relevant constructions 
and definitions arising this way in fully explicit terms. 
Theorems \ref{theo:iso2trivial} and \ref{theo:iso2trivialmonoidal} establish 
simple criteria that are useful
to detect whether an involutive ((symmetric) monoidal) category
is isomorphic to one with a trivial involutive structure.
In Section \ref{sec:symseq} we show that the category
of colored symmetric sequences, which underlies colored operad theory,
carries a canonical involutive monoidal structure in the sense of
\cite{Jacobs}, but not in the sense of \cite{BeggsMajid,Egger}.
The relevant involutive structure is obtained by employing 
a general construction, namely exponentiation of involutive
categories, which results in involutive structures on functor categories.
Colored $\ast$-operads with values in any cocomplete involutive closed
symmetric monoidal category $(\MM,J,j)$ are defined in Section \ref{sec:astOp}
as $\ast$-monoids in our involutive monoidal category of colored symmetric
sequences. In Proposition \ref{propo:OpastObj} we shall prove that the resulting category
is isomorphic to the category of ordinary colored operads with values in the category
of $\ast$-objects in $(\MM,J,j)$, which provides an alternative point of view on 
colored $\ast$-operads. The possibility to switch between these equivalent 
perspectives is useful for concrete applications and also to import techniques from ordinary 
operad theory to the involutive setting. In Section \ref{sec:astAlg} we introduce and 
study the category of $\ast$-algebras over colored $\ast$-operads.
In particular, we prove that a change of colored $\ast$-operad induces
an adjunction between the associated categories
of $\ast$-algebras, which generalizes the corresponding
crucial and widely used result from ordinary to involutive category theory.
Finally, in Section \ref{sec:QFTs} we endow the algebraic quantum field theory
operads constructed in \cite{BeniniSchenkelWoike} with a canonical order-reversing structure 
of colored $\ast$-operads and provide a characterization of
the corresponding categories of $\ast$-algebras. As a simple example,
we obtain a $\ast$-operad structure on the associative operad
and show that its $\ast$-algebras behave like $\ast$-algebras over $\bbC$ 
in the sense that the involution reverses the order of multiplication 
$(a\,b)^\ast = b^\ast\,a^\ast$. It is essential to emphasize
that this order-reversal is encoded in our $\ast$-operad structure. 
This is radically different from the approach of \cite{BeggsMajid,Egger}, 
whose definition of an involutive monoidal category prescribes that 
the endofunctor $J$ reverses the monoidal structure up to natural isomorphism, 
thus recovering unital and associative $\ast$-algebras over $\bbC$ 
directly as $\ast$-monoids in $\Vec_\bbC$.

\paragraph*{Notations:} We denote categories by boldface letters like $\CC$, $\DD$ and $\EE$.
Objects in categories are indicated by $c\in \CC$ and we write
$\CC(c,c^\prime)$ for the set of morphisms from $c$ to $c^\prime$ in $\CC$. Functors are denoted 
by capital letters like $F : \CC\to \CC^\prime$ or  $X : \DD\to\CC$, and so are the identity functors
$\ID_\CC^{} : \CC\to\CC$. Natural transformations are denoted by Greek letters like 
$\zeta : F\to G$ or $\alpha : X\to Y$. Given functors $K : \DD^\prime \to \DD$, $X : \DD\to \CC$ 
and $J:\CC\to \CC^\prime$, we denote their composition simply by juxtaposition $JXK : \DD^\prime \to \CC^\prime$.
Given also a natural transformation $\alpha : X\to Y$ of functors $X,Y: \DD\to \CC$,
we denote by
\begin{subequations}
\begin{flalign}
J\alpha K \,:\, JXK ~\longrightarrow ~ JYK\quad  
\end{flalign}
the {\em whiskering} of $J$, $\alpha$ and $K$. Explicitly,
$J\alpha K$ is the natural transformation with components
\begin{flalign}
(J\alpha K)_{d^\prime}^{} = J\alpha_{K d^\prime}^{}\,:\, JXKd^\prime ~\longrightarrow ~ JYKd^\prime\quad,
\end{flalign}
\end{subequations}
for all $d^\prime\in\DD^\prime$. For $\beta : Y\to Z$ another natural transformation, one easily confirms that
\begin{flalign}
(J\beta K)~(J\alpha K) \,=\, J\big(\beta \alpha \big)K\,:\, JXK~\longrightarrow ~ JZK \quad,
\end{flalign}
where (vertical) composition of natural transformations is also denoted by juxtaposition.
We shall need some basic elements of (strict) $2$-category theory, for which we refer to \cite{KellyStreet}.


\section{\label{sec:invcat}Involutive categories}
This section contains a brief review of involutive categories. 
We shall mostly follow the definitions and conventions of Jacobs \cite{Jacobs} 
and refer to this paper for more details and some of the proofs.
We strongly emphasize and also develop further the $2$-categorical aspects 
of involutive category theory established in \cite{Jacobs}, which 
will be relevant for the development of our present paper. 
When it comes to notations and terminology, we sometimes
prefer the work of Beggs and Majid \cite{BeggsMajid} 
and the one of Egger \cite{Egger}.

\subsection{Basic definitions and properties}
\begin{defi}\label{def:invcat}
An {\em involutive category} is a triple $(\CC,J,j)$ consisting of
a category $\CC$, an endofunctor $J : \CC\to\CC$ and a natural isomorphism
$j : \ID_\CC^{} \to J^2$ satisfying 
\begin{flalign}
 j J \,=\, J j \,:\, J ~\longrightarrow ~ J^3\quad.
\end{flalign}
\end{defi}

\begin{ex}\label{ex:trivial}
For any category $\CC$, the triple $(\CC,\ID_{\CC}^{},\id_{\ID_{\CC}})$
defines an involutive category. We call this the {\em trivial involutive category} over $\CC$.
\end{ex}

\begin{ex}\label{ex:vec}
Let $\Vec_\bbC$ be the category of complex vector spaces.
Consider the endofunctor $\overline{(-)} :\Vec_{\bbC}\to \Vec_{\bbC}$ 
that assigns to any $V\in\Vec_{\bbC}$ its complex conjugate vector
space $\overline{V}\in\Vec_{\bbC}$ and to any $\bbC$-linear map $f : V\to W$
the canonically induced $\bbC$-linear map $\overline{f} : \overline{V}\to\overline{W}$.
Notice that $\overline{\overline{(-)}}= \ID_{\Vec_{\bbC}}$, hence
the triple $(\Vec_\bbC,\overline{(-)},\id_{\ID_{\Vec_{\bbC}}})$ is an involutive category. 
\end{ex}

\begin{ex}\label{ex:Cprofiles}
Let $\CCC$ be any non-empty set and $\Sigma_{\CCC}$ the associated {\em groupoid of $\CCC$-profiles}.
The objects of $\Sigma_{\CCC}$  are finite sequences $\und{c} = (c_1,\dots,c_n)$
of elements in $\CCC$, including also the empty sequence $\emptyset \in \Sigma_{\CCC}$.
We denote by $\vert \und{c}\vert =n$ the length of the sequence.
The morphisms of $\Sigma_{\CCC}$ are right permutations
$\sigma : \und{c} \to \und{c}\sigma := (c_{\sigma(1)},\dots,c_{\sigma(n)})$,
with $\sigma\in\Sigma_{\vert \und{c}\vert}$ in the symmetric group on $\vert \und{c}\vert$ letters.
We define an endofunctor $\mathrm{Rev} : \Sigma_\CCC\to \Sigma_\CCC$
as follows: To an object $\und{c}= (c_1,\dots,c_n)\in\Sigma_\CCC$ 
it assigns the reversed sequence
\begin{subequations}
\begin{flalign}
\mathrm{Rev}(\und{c}) \,:= \, \und{c}\, \rho^{}_{\vert\und{c}\vert} \,:= \, (c_n,\dots, c_1)\quad,
\end{flalign}
where $\rho^{}_{\vert\und{c}\vert}\in\Sigma_{\vert\und{c}\vert}$ 
denotes the order-reversal permutation. To a 
$\Sigma_\CCC$-morphism $\sigma : \und{c} \to \und{c}\sigma$  it assigns the
right permutation
\begin{flalign}
\mathrm{Rev}(\sigma) :=  \rho^{}_{\vert\und{c}\vert}\, \sigma\,\rho^{}_{\vert\und{c}\vert} \,:\,  \mathrm{Rev}(\und{c}) ~\longrightarrow ~ \mathrm{Rev}(\und{c}\sigma) \quad,
\end{flalign}
\end{subequations}
where we also used that $\vert \und{c}\sigma\vert = \vert \und{c}\vert$.
Notice that $\mathrm{Rev}^2 = \ID_{\Sigma_\CCC}$, hence
the triple $(\Sigma_\CCC,\mathrm{Rev},\id_{\ID_{\Sigma_\CCC}})$ is an involutive category.
\end{ex}

The following very useful result appears in \cite[Lemma 1]{Jacobs}.
\begin{lem}\label{lem:Jselfadjoint}
For every involutive category $(\CC,J,j)$, the endofunctor $J: \CC\to \CC$ is self-adjoint, i.e.\  $J \dashv J$.
As a consequence, $J$ preserves all limits and colimits that exist in $\CC$.
\end{lem}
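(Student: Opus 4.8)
The plan is to exhibit an explicit adjunction $J \dashv J$ by writing down its unit and counit and then verifying the two triangle identities; the preservation statement will follow immediately from the standard fact that a left adjoint preserves all colimits and a right adjoint preserves all limits, so that a functor which is adjoint to itself preserves both.

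First I would take the unit to be the given natural isomorphism $\eta := j : \ID_\CC^{} \to J^2$ and the counit to be its inverse $\epsilon := j^{-1} : J^2 \to \ID_\CC^{}$. In the self-adjunction $J \dashv J$ both of the relevant composites $RL$ and $LR$ coincide with $J^2$, so these transformations have exactly the right source and target: $\eta$ serves as the map $\ID_\CC^{} \to JJ$ and $\epsilon$ as the map $JJ \to \ID_\CC^{}$. What remains is to check the triangle identities $(\epsilon J)(J\eta) = \id_J$ and $(J\epsilon)(\eta J) = \id_J$.

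Spelling these out, the first reads $(j^{-1}J)(Jj) = \id_J$ and the second reads $(Jj^{-1})(jJ) = \id_J$. This is precisely the point at which the coherence condition $jJ = Jj$ of Definition \ref{def:invcat} is used. Rewriting $Jj$ as $jJ$ in the first identity produces $(j^{-1}J)(jJ)$, which by the functoriality of whiskering recorded in the Notations equals $(j^{-1}j)J = \id_{\ID_\CC^{}}J = \id_J$; symmetrically, rewriting $jJ$ as $Jj$ in the second identity produces $(Jj^{-1})(Jj) = J(j^{-1}j) = J\,\id_{\ID_\CC^{}} = \id_J$. Hence both triangle identities hold and $J \dashv J$ is established.

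Finally, since $J$ is simultaneously a left and a right adjoint of itself, it preserves all colimits and all limits that exist in $\CC$, which is the asserted consequence. The only delicate point in the whole argument is careful bookkeeping of the left versus right whiskerings appearing in the two triangle identities, together with the observation that the single coherence axiom $jJ = Jj$ is exactly what is needed to bring each identity into a form where functoriality of whiskering collapses $j$ against $j^{-1}$. I do not expect any genuine obstacle beyond this care; the substance of the statement is entirely contained in matching the coherence axiom to the triangle identities.
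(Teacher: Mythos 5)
Your proof is correct and is exactly the standard argument behind the result the paper cites from Jacobs (the paper itself gives no proof, deferring to \cite[Lemma 1]{Jacobs}): unit $j$, counit $j^{-1}$, and the coherence axiom $jJ = Jj$ is precisely what makes both triangle identities collapse. Nothing to add.
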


\begin{defi}\label{def:invfunandnat}
An {\em involutive functor}  $(F,\nu) : (\CC,J,j)\to (\CC^\prime ,J^\prime ,j^\prime) $ consists of
a functor $F : \CC\to\CC^\prime$ and a natural transformation $\nu : F J \to J^\prime F$
satisfying
\begin{flalign}\label{eqn:invfundiagram}
\xymatrix{
\ar[d]_-{Fj} F \ar@{=}[rr]& & F\ar[d]^-{j^\prime F} \\
F J^2 \ar[r]_-{\nu J }& J^\prime F J  \ar[r]_-{J^\prime \nu} &J^{\prime 2}F
}
\end{flalign}
An {\em involutive natural transformation} 
$\zeta : (F,\nu) \to (G,\chi)$ between involutive functors $(F,\nu), (G,\chi) :
(\CC,J,j)\to (\CC^\prime ,J^\prime ,j^\prime)$ is a natural transformation $\zeta : F\to G$ satisfying
\begin{flalign}
\xymatrix{
\ar[d]_-{\nu} FJ  \ar[rr]^-{\zeta J }&& GJ \ar[d]^-{\chi}\\
J^\prime F \ar[rr]_-{J^\prime \zeta}&& J^\prime G
}
\end{flalign}
\end{defi}

\begin{propo}
Involutive categories, involutive functors and involutive natural transformations
form a $2$-category $\ICat$.
\end{propo}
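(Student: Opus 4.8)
The plan is to verify the three defining axioms of a strict $2$-category directly from the data already introduced in Definition \ref{def:invfunandnat}: namely, we must specify $0$-, $1$-, and $2$-cells together with their compositions, check that hom-categories are genuine categories, and confirm the interchange law relating horizontal and vertical composition of involutive natural transformations. The $0$-cells are involutive categories $(\CC,J,j)$, the $1$-cells are involutive functors $(F,\nu)$, and the $2$-cells are involutive natural transformations $\zeta$. Since the underlying functors, natural transformations, and their vertical/horizontal compositions are inherited from the ordinary $2$-category $\Cat$, the real content is to show that all these operations preserve the involutive compatibility conditions and that the inherited identities serve as units.

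First I would define vertical composition: given involutive natural transformations $\zeta : (F,\nu)\to(G,\chi)$ and $\xi : (G,\chi)\to (H,\psi)$, I claim the ordinary vertical composite $\xi\zeta : F\to H$ is again involutive. This follows by stacking the two defining squares from Definition \ref{def:invfunandnat} vertically and using that $J^\prime(\xi\zeta) = (J^\prime\xi)(J^\prime\zeta)$, which is exactly the functoriality of the whiskering recorded in the Notations. The identity $2$-cell $\id_F$ is trivially involutive, so each hom-category $\ICat\big((\CC,J,j),(\CC^\prime,J^\prime,j^\prime)\big)$ is a well-defined category.

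Next I would treat the $1$-cells and horizontal composition. For composability of involutive functors $(F,\nu) : (\CC,J,j)\to(\CC^\prime,J^\prime,j^\prime)$ and $(G,\mu):(\CC^\prime,J^\prime,j^\prime)\to(\CC^{\prime\prime},J^{\prime\prime},j^{\prime\prime})$, I would define the composite structure map as the whiskered pasting $(\mu F)\,(G\nu) : GFJ\to J^{\prime\prime}GF$ and verify that it satisfies the hexagon \eqref{eqn:invfundiagram}; this is a diagram chase pasting the coherence square for $(F,\nu)$ with $G$ applied to it and the square for $(G,\mu)$, invoking the compatibility $jJ=Jj$ as needed. The identity $1$-cell on $(\CC,J,j)$ is $(\ID_\CC,\id_J)$, whose axiom reduces to $jJ=Jj$ and hence holds. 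Horizontal composition of $2$-cells is the usual Godement product $\xi\ast\zeta$, and I would check it is involutive by pasting the relevant naturality squares, the main bookkeeping being the compatibility of the composite structure maps defined just above.

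The main obstacle is the \emph{interchange law}: one must confirm that horizontal and vertical composition of involutive $2$-cells satisfy the middle-four-interchange identity. However, since all compositions are defined to agree with those of the ambient $2$-category $\Cat$ on underlying functors and natural transformations, interchange is automatically inherited from $\Cat$; the only genuine verification is that each composite lands in the subclass of \emph{involutive} data, which is precisely what the preceding diagram chases establish. Thus no independent interchange computation is required, and the associativity and unitality of composition likewise descend from $\Cat$. Assembling these checks shows that $\ICat$ is a strict $2$-category.
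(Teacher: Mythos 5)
Your proof is correct and follows essentially the same route as the paper, which states the proposition without proof and records exactly this structure (identity $1$-cells $(\ID_\CC,\id_J)$, composite structure maps $(\nu^\prime F)(F^\prime\nu)$, and vertical/horizontal composition inherited from $\Cat$) in Remark \ref{rem:2cat}. One small remark: the condition $jJ=Jj$ is not actually needed either for the identity $1$-cell axiom (which is trivially satisfied) or for the hexagon of a composite (which follows from pasting the two hexagons and naturality of the outer structure map), but invoking it superfluously does no harm.
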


\begin{rem}\label{rem:2cat}
Let us describe the $2$-category structure on $\ICat$ explicitly.
\begin{itemize}
\item[(i)] For any involutive category $(\CC,J,j)$, the identity involutive functor
is given by  $\ID_{(\CC,J,j)}^{} := (\ID_\CC, \id_{J}) :  (\CC,J,j)\to  (\CC,J,j)$.

\item[(ii)] Given two involutive functors $(F,\nu) : (\CC,J,j)\to (\CC^\prime , J^\prime , j^\prime)$ 
and $(F^\prime ,\nu^\prime): (\CC^\prime , J^\prime , j^\prime)\to (\CC^{\prime\prime} , J^{\prime\prime} , 
j^{\prime\prime})$, their composition is given by
\begin{flalign}
(F^\prime ,\nu^\prime )\, (F,\nu)  := \big(F^\prime F , (\nu^\prime F)\, (F^\prime \nu) \big) 
\, :\, (\CC,J,j)~\longrightarrow ~ (\CC^{\prime\prime} , J^{\prime\prime} , j^{\prime\prime})\quad.
\end{flalign}

\item[(iii)] Vertical/horizontal composition of involutive natural transformations
is given by vertical/horizontal composition of their underlying natural transformations.
(It is easy to verify that the latter compositions define involutive natural transformations.)\qedhere
\end{itemize}
\end{rem}

The following technical lemma is proven in \cite[Lemma 2]{Jacobs}.
\begin{lem}\label{lem:nuiso}
For every involutive functor $(F,\nu) : (\CC,J,j)\to (\CC^\prime ,J^\prime ,j^\prime)$, 
the natural transformation $\nu: F J \to J^\prime F$ is a natural isomorphism.
\end{lem}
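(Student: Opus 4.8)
The plan is to exhibit an explicit two-sided inverse of $\nu$ by taking its \emph{mate} with respect to the self-adjunctions $J\dashv J$ and $J^\prime\dashv J^\prime$ supplied by Lemma~\ref{lem:Jselfadjoint}, whose unit and counit may be taken to be $j$ and $j^{-1}$ (respectively $j^\prime$ and $(j^\prime)^{-1}$), the triangle identities being exactly the axiom $jJ=Jj$. Concretely, I would set
\[
\mu \,:=\, \big((j^\prime)^{-1} FJ\big)\,\big(J^\prime \nu J\big)\,\big(J^\prime F j\big) \,:\, J^\prime F ~\longrightarrow~ FJ \,,
\]
which is well typed since $J^\prime F j : J^\prime F\to J^\prime FJ^2$, then $J^\prime\nu J : J^\prime FJ^2\to J^{\prime 2} FJ$, and finally $(j^\prime)^{-1}FJ : J^{\prime 2}FJ\to FJ$. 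It then remains to check the two equations $\mu\,\nu=\id_{FJ}$ and $\nu\,\mu=\id_{J^\prime F}$, which I would verify componentwise at an arbitrary object $c\in\CC$.

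For the first equation, the component $(\mu\,\nu)_c$ is the composite $(j^\prime)^{-1}_{FJc}\,J^\prime(\nu_{Jc})\,J^\prime F(j_c)\,\nu_c$. The key manoeuvre is to apply naturality of $\nu$ to the morphism $j_c : c\to J^2 c$, which rewrites $J^\prime F(j_c)\,\nu_c$ as $\nu_{J^2 c}\,F(J(j_c))$; here the involutive axiom $Jj=jJ$ identifies $J(j_c)=j_{Jc}$, so that $(\mu\,\nu)_c$ becomes $(j^\prime)^{-1}_{FJc}\,J^\prime(\nu_{Jc})\,\nu_{J^2 c}\,F(j_{Jc})$. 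The middle three factors are precisely the left-hand side of the coherence diagram \eqref{eqn:invfundiagram} evaluated at the object $Jc$; hence they equal $j^\prime_{FJc}$ and cancel against $(j^\prime)^{-1}_{FJc}$, leaving $\id_{FJc}$.

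For the second equation, the component $(\nu\,\mu)_c=\nu_c\,(j^\prime)^{-1}_{FJc}\,J^\prime(\nu_{Jc})\,J^\prime F(j_c)$ is treated dually. Naturality of $j^\prime$ (equivalently of $(j^\prime)^{-1}$) applied to $\nu_c$ moves $\nu_c$ past $(j^\prime)^{-1}_{FJc}$, producing $(j^\prime)^{-1}_{J^\prime Fc}\,J^{\prime 2}(\nu_c)$, after which all remaining arrows carry a $J^\prime$ and may be collected as $J^\prime\big(J^\prime(\nu_c)\,\nu_{Jc}\,F(j_c)\big)$. Now the coherence diagram \eqref{eqn:invfundiagram} at the object $c$ collapses the bracket to $j^\prime_{Fc}$, and the target axiom $J^\prime j^\prime=j^\prime J^\prime$ identifies $J^\prime(j^\prime_{Fc})=j^\prime_{J^\prime Fc}$, which cancels against the remaining $(j^\prime)^{-1}_{J^\prime Fc}$ to give $\id_{J^\prime Fc}$. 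Since both composites are identities, $\nu$ is a natural isomorphism with inverse $\mu$.

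The only real subtlety, and the step I would flag as the main obstacle, is resisting the temptation to apply the coherence condition directly at $c$ in the first computation: doing so would force one to invert $\nu_c$, i.e.\ to assume the very conclusion. Evaluating the coherence at $Jc$ instead, which is made possible by feeding $j_c$ through the naturality of $\nu$ together with $Jj=jJ$, is exactly what breaks this circularity. Everything else is routine bookkeeping with whiskering and the interchange law, using only that $j$ and $j^\prime$ are isomorphisms.
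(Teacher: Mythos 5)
Your proof is correct: the candidate inverse $\mu$ is well typed, and both composites collapse to identities exactly as you compute, using naturality of $\nu$ and of $j^\prime$, the coherence diagram \eqref{eqn:invfundiagram} evaluated at $Jc$ (for $\mu\,\nu$) and at $c$ (for $\nu\,\mu$), and the axioms $jJ=Jj$ and $j^\prime J^\prime = J^\prime j^\prime$. The paper does not prove this lemma itself but defers to \cite[Lemma 2]{Jacobs}, whose argument is the same explicit construction of the inverse as the mate of $\nu$ along the self-adjunctions $J\dashv J$ and $J^\prime\dashv J^\prime$; your point about evaluating the coherence at $Jc$ rather than at $c$ is precisely what makes the verification non-circular.
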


As in any $2$-category, there exists 
the concept of adjunctions in the $2$-category $\ICat$.
\begin{defi}\label{def:invadj}
An {\em involutive adjunction} 
\begin{flalign}
\xymatrix{
(L,\lambda) \,:\, (\CC,J,j)~\ar@<0.5ex>[r]&\ar@<0.5ex>[l]  ~(\DD,K,k) \,:\, (R,\rho)
}
\end{flalign}
consists of two involutive functors $(L,\lambda) : (\CC,J,j)\to (\DD,K,k)$
and $(R,\rho) : (\DD,K,k)\to (\CC,J,j)$ together with two involutive
natural transformations $\eta : \ID_{(\CC,J,j)} \to (R,\rho)\,(L,\lambda)$
(called {\em unit}) and $\epsilon : (L,\lambda)\,(R,\rho) \to \ID_{(\DD,K,k)}$
(called {\em counit}) that satisfy the triangle identities
\begin{flalign}
\xymatrix{
\ar[drr]_-{\id_{(R,\rho)}} (R,\rho) \ar[rr]^-{\eta\,(R,\rho)}&& (R,\rho)\,(L,\lambda)\,(R,\rho)\ar[d]^-{(R,\rho)\,\epsilon}\\
&& (R,\rho)
}\qquad\qquad
\xymatrix{
\ar[drr]_-{\id_{(L,\lambda)}}(L,\lambda) \ar[rr]^-{(L,\lambda)\,\eta} && (L,\lambda)\,(R,\rho)\,(L,\lambda)
\ar[d]^-{\epsilon\,(L,\lambda)}\\
&& (L,\lambda)
}
\end{flalign}
We also denote involutive adjunctions simply by $(L,\lambda) \dashv (R,\rho)$.
\end{defi}

\begin{rem}
Applying the forgetful $2$-functor $\ICat\to \Cat$, every involutive adjunction
$(L,\lambda)\dashv (R,\rho)$ defines an ordinary adjunction $L\dashv R$ in the 
$2$-category of categories $\Cat$. Notice that an involutive adjunction is the same thing as
an ordinary adjunction $L\dashv R$ (between categories equipped with an involutive structure) 
whose functors $L$ and $R$ are equipped
with involutive structures that are compatible with the unit and counit 
in the sense that the latter become of involutive natural transformations.
This alternative point of view will be useful 
in Corollary \ref{cor:coloradjunction} and Theorem \ref{theo:adjunctionastAlg} below, 
where we make use of the construction in the following proposition.
\end{rem}

\begin{propo}\label{propo:invadj}
Let $(R,\rho): (\DD,K,k) \to (\CC,J,j)$ be an involutive functor 
and suppose that $L: \CC \to \DD$ is a left adjoint to the functor 
$R: \DD \to \CC$. Define a natural transformation 
$\lambda$ by
\begin{flalign}
\xymatrix{
\ar[d]_-{LJ \eta} LJ \ar[rr]^-{\lambda} && KL\\
LJRL \ar[rr]_-{L \rho^{-1} L} && LRKL\ar[u]_-{\epsilon KL}
}
\end{flalign}
where $\eta: \ID_{\CC} \to RL$ and $\epsilon: LR \to \ID_{\DD}$ 
are the unit and counit of the adjunction $L \dashv R$.
Then $(L,\lambda) \dashv (R,\rho)$ is an involutive adjunction.
\end{propo}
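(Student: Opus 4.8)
The plan is to verify the three things that the definition of an involutive adjunction demands (Definition \ref{def:invadj}): first, that the given $\lambda$ makes $(L,\lambda)$ an involutive functor; second, that the unit $\eta$ and counit $\epsilon$ of the ordinary adjunction $L\dashv R$ are in fact involutive natural transformations with respect to the involutive structures $\lambda$ and $\rho$; and third, that the triangle identities hold in $\ICat$. The triangle identities for the underlying natural transformations are automatic, since $L\dashv R$ is assumed to be an ordinary adjunction; so the only genuine content is the compatibility of $\lambda$ with the coherence data and the involutivity of $\eta$ and $\epsilon$.

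First I would record two simplifying observations. Since $(R,\rho)$ is an involutive functor, Lemma \ref{lem:nuiso} guarantees that $\rho : RK \to JR$ is a natural isomorphism, so $\rho^{-1}$ appearing in the definition of $\lambda$ is legitimate; moreover the defining diagram for $\lambda$ expresses it as a composite of natural isomorphisms ($LJ\eta$ is invertible by the triangle identity only up to care, so more robustly $\lambda$ is built from $\rho^{-1}$, $\epsilon KL$ and $LJ\eta$, and I would note directly that the resulting $\lambda$ is an isomorphism, consistent with Lemma \ref{lem:nuiso}). I would then set up the mate correspondence: under the adjunction $L\dashv R$, natural transformations $LJ \to KL$ correspond bijectively to natural transformations $J R \to RK$, and the definition of $\lambda$ is precisely the statement that $\lambda$ is the mate of $\rho^{-1} : JR \to RK$ under this correspondence. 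Framing the whole proof through the calculus of mates is what will make the verifications conceptual rather than a morass of pasted diagrams.

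Next I would verify that $(L,\lambda)$ is an involutive functor, i.e.\ that the hexagon \eqref{eqn:invfundiagram} commutes with $(F,\nu)$ replaced by $(L,\lambda)$ and the target involutive structure $(K,k)$. The cleanest route is to transport the involutive-functor axiom for $(R,\rho)$ across the mate correspondence: the axiom for $(R,\rho)$ states a compatibility of $\rho$ with $j$ and $k$, and taking mates (using that the mate of a composite is the composite of mates, and that $j,k$ are the coherence isomorphisms satisfying $jJ = Jj$ and $kK = Kk$) turns it into exactly the axiom for $(L,\lambda)$. This step is where I expect the main obstacle to lie, because one must track how the coherence isomorphisms $j$ and $k$ interact with the unit and counit under mating, and in particular use Lemma \ref{lem:Jselfadjoint} (self-adjointness $J\dashv J$, $K\dashv K$) together with the hypothesis $jJ=Jj$ to match the two sides; getting the direction of $\rho^{-1}$ versus $\rho$ correct throughout is the delicate bookkeeping point.

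Finally I would check that $\eta$ and $\epsilon$ are involutive natural transformations. For $\eta : \ID_{(\CC,J,j)} \to (R,\rho)(L,\lambda)$ this means verifying the square in Definition \ref{def:invfunandnat} relating $\eta J$, $J\eta$, the identity involutive structure on $\ID_\CC$, and the composite involutive structure $(\rho L)(R\lambda)$ on $RL$ (as in Remark \ref{rem:2cat}(ii)); substituting the definition of $\lambda$ and cancelling the adjunction zig-zags via the triangle identities reduces this square to a triviality. The statement for $\epsilon$ is handled symmetrically, again by expanding $\lambda$ and applying the triangle identities for $L\dashv R$. Once these compatibilities are in place, the already-noted triangle identities upgrade to the triangle identities in $\ICat$, and $(L,\lambda)\dashv(R,\rho)$ is an involutive adjunction.
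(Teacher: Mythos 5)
Your plan is correct and matches the paper's proof in structure: the paper likewise reduces the statement to showing that $(L,\lambda)$ satisfies the involutive functor axiom \eqref{eqn:invfundiagram} (dismissed there as ``a slightly lengthy diagram chase'', which your mate-calculus framing organizes nicely) and that $\eta$ and $\epsilon$ are involutive natural transformations, the triangle identities being inherited from the ordinary adjunction $L\dashv R$. One small caveat on a side remark: you cannot ``note directly'' from the defining composite that $\lambda$ is an isomorphism, since neither $LJ\eta$ nor $\epsilon KL$ is invertible in general --- invertibility of $\lambda$ only follows a posteriori from Lemma \ref{lem:nuiso} once $(L,\lambda)$ is known to be an involutive functor, but this is never needed in the argument.
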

\begin{proof}
The above diagram defines a natural transformation $\lambda$ because
$\rho$ is a natural isomorphism, cf.\ Lemma \ref{lem:nuiso}.
A slightly lengthy diagram chase shows that $(L,\lambda) : (\CC,J,j)\to (\DD,K,k)$
is an involutive functor. Furthermore, by the definition of 
$\lambda$, the natural transformations $\eta$ and $\epsilon$ 
are involutive natural transformations. 
\end{proof}

\begin{rem}
Even though we will not need it in the following, let us briefly mention that
the dual of Proposition \ref{propo:invadj} also holds true: Let $(L,\lambda) : (\CC,J,j)\to (\DD,K, k)$
be an involutive functor and suppose that $R :\DD\to \CC$ is a right adjoint to the functor
$L : \CC\to\DD$. Then $(L,\lambda)\dashv (R,\rho)$ is an involutive adjunction for
$\rho$ defined by 
\begin{flalign}
\xymatrix{
\ar[d]_-{\eta JR} JR \ar[rr]^-{\rho^{-1}} && RK\\
RLJR \ar[rr]_-{R \lambda R} && RKLR\ar[u]_-{RK\epsilon}
}
\end{flalign}
where $\eta: \ID_{\CC} \to RL$ and $\epsilon: LR \to \ID_{\DD}$ 
are the unit and counit of the adjunction $L \dashv R$.
\end{rem}

\subsection{$\ast$-objects}
\begin{defi}\label{def:astobject}
A {\em $\ast$-object} in an involutive category 
$(\CC,J,j)$ is a $\CC$-morphism $\ast : c \to Jc$ satisfying
\begin{flalign}
\xymatrix{
\ar[drr]_-{j_c^{}} c \ar[rr]^-{\ast} && Jc\ar[d]^-{J\ast}\\
&& J^2c
}
\end{flalign}
A {\em $\ast$-morphism} $f: (\ast : c \to Jc) \to (\ast^\prime : c^\prime \to Jc^\prime)$
is a $\CC$-morphism $f :c\to c^\prime$ satisfying
\begin{flalign}
\xymatrix{
\ar[d]_-{\ast}c\ar[rr]^-{f} && c^\prime\ar[d]^-{\ast^\prime}\\
Jc\ar[rr]_-{Jf} && Jc^\prime
}
\end{flalign} 
We denote the category of $\ast$-objects in $(\CC,J,j)$
by $\astObj(\CC,J, j)$.
\end{defi}

\begin{rem}\label{rem:astisiso}
For any $\ast$-object $(\ast : c\to Jc) \in \astObj(\CC,J, j)$, 
the $\CC$-morphism $\ast : c\to Jc$ is an isomorphism with inverse given by
$ j_c^{-1}\, J\ast : Jc\to c$.
\end{rem}

\begin{ex}\label{ex:trivial2}
Consider the trivial involutive category $(\CC,\ID_\CC,\id_{\ID_\CC})$ 
from Example \ref{ex:trivial}. A $\ast$-object consists of an object $c\in\CC$ equipped
with a $\CC$-endomorphism $\ast: c \to c$ satisfying $\ast^2 =\id_c$, i.e.\ an object equipped with an involution.
\end{ex}

\begin{ex}\label{ex:vec2}
Consider the involutive category $(\Vec_\bbC,\overline{(-)},\id_{\ID_{\Vec_\bbC}})$
from Example \ref{ex:vec}. A $\ast$-object consists of a complex vector space $V$
equipped with a complex {\em anti-linear} map $\ast: V \to V$ 
satisfying $\ast^2=\id_V$. 
\end{ex}

\begin{ex}\label{ex:Cprofiles2}
Consider the involutive category $(\Sigma_\CCC,\mathrm{Rev},\id_{\ID_{\Sigma_\CCC}})$
from Example \ref{ex:Cprofiles}. A $\ast$-object
consists of a $\CCC$-profile $\und{c} = (c_1,\dots,c_n)$
equipped with a right permutation $\ast : \und{c} \to \mathrm{Rev}(\und{c}) = \und{c}\,\rho^{}_{\vert\und{c}\vert}$
satisfying $\ast \rho^{}_{\vert\und{c}\vert} \ast \rho^{}_{\vert\und{c}\vert} 
= e \in \Sigma_{\vert\und{c}\vert}$, where $e$ denotes the identity permutation.
In particular, any object $\und{c}\in\Sigma_\CCC$ carries
a canonical $\ast$-object structure given by $\rho^{}_{\vert\und{c}\vert} : 
\und{c} \to \und{c}\,\rho^{}_{\vert\und{c}\vert}$.
The assignment $\und{c} \mapsto (\rho^{}_{\vert\und{c}\vert} : 
\und{c} \to \und{c}\,\rho^{}_{\vert\und{c}\vert})$ 
defines a functor $ \rho : \Sigma_\CCC \to \astObj(\Sigma_\CCC,\mathrm{Rev},\id_{\ID_{\Sigma_\CCC}})$
that is a section of the forgetful functor $U :  
\astObj(\Sigma_\CCC,\mathrm{Rev},\id_{\ID_{\Sigma_\CCC}} ) \to\Sigma_\CCC $.
\end{ex}

For any involutive category $(\CC,J,j)$, there exists a 
forgetful functor $U : \astObj(\CC,J,j) \to \CC$ 
specified by $(\ast : c\to Jc) \mapsto c$.
If the category $\CC$ has coproducts, 
we can define for any object $c\in\CC$ a morphism
\begin{flalign}\label{eqn:freeastobject}
F(c) ~:=~\Big(\xymatrix{
c \sqcup Jc \,\cong \, Jc \sqcup c \,\ar[r]^-{\id \sqcup j_c} & \, Jc \sqcup J^2c \, \cong \, J(c \sqcup Jc) 
}\Big)
\end{flalign} 
in $\CC$, where in the last step we used that $J$ 
preserves coproducts because of Lemma \ref{lem:Jselfadjoint}. 
One can easily check that \eqref{eqn:freeastobject} defines a $\ast$-object in 
$(\CC,J,j)$, i.e.\ $F(c) \in\astObj(\CC,J,j)$. Another direct computation shows
\begin{propo}
Let $(\CC,J,j)$ be an involutive category that admits coproducts. 
The assignment $c \mapsto F(c)$ given by \eqref{eqn:freeastobject} 
naturally extends to a functor $F: \CC \to \astObj(\CC,J,j)$, which is 
a left adjoint of the forgetful functor $U : \astObj(\CC,J,j) \to \CC$.
\end{propo}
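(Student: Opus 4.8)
The plan is to establish the statement in two moves: first promote $F$ to a functor, and then exhibit a natural bijection of hom-sets witnessing $F \dashv U$. For functoriality, I would define $F$ on a morphism $f : c\to c^\prime$ by $F(f) := f\sqcup Jf : c\sqcup Jc \to c^\prime \sqcup Jc^\prime$. Since both the coproduct and $J$ are functorial, this visibly respects identities and composition, so the only nontrivial point is that $f\sqcup Jf$ is a $\ast$-morphism. To check this efficiently I would first record how the $\ast$-structure of $F(c)$ acts on the coproduct inclusions $\iota_1 : c\to c\sqcup Jc$ and $\iota_2 : Jc\to c\sqcup Jc$. Unwinding the definition \eqref{eqn:freeastobject}---the swap, then $\id\sqcup j_c$, then the canonical isomorphism $J(c\sqcup Jc)\cong Jc\sqcup J^2c$ coming from Lemma \ref{lem:Jselfadjoint}---gives the two identities $\ast\,\iota_1 = (J\iota_2)\,j_c$ and $\ast\,\iota_2 = J\iota_1$. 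With these, the equation $\ast_{c^\prime}\,(f\sqcup Jf) = J(f\sqcup Jf)\,\ast_c$ can be tested on $\iota_1$ and $\iota_2$ separately, where it reduces---using only naturality of $j$ and the defining behaviour of $f\sqcup Jf$ on inclusions---to identities that fall out directly. (The same restriction identities also give the asserted fact that $F(c)$ is a $\ast$-object: testing $(J\ast)\,\ast = j_{c\sqcup Jc}$ on $\iota_1$ uses naturality of $j$, and on $\iota_2$ it additionally uses the coherence $jJ = Jj$ of Definition \ref{def:invcat} in the form $j_{Jc}=Jj_c$.)

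For the adjunction I would construct, for each $c\in\CC$ and each $\ast$-object $(\ast^\prime : d\to Jd)$, the map
\[
\astObj(\CC,J,j)\big(F(c),(\ast^\prime : d\to Jd)\big) ~\longrightarrow~ \CC(c,d)\quad,\qquad g \longmapsto g\,\iota_1\quad,
\]
and show it is a bijection natural in both arguments. By the universal property of the coproduct, a $\CC$-morphism $g : c\sqcup Jc\to d$ is the same datum as a pair $(a,b)$ with $a := g\,\iota_1 : c\to d$ and $b := g\,\iota_2 : Jc\to d$. Testing the $\ast$-morphism equation $\ast^\prime\,g = (Jg)\,\ast$ on $\iota_1$ and $\iota_2$ and inserting the restriction identities above turns it into the two conditions $\ast^\prime\,a = (Jb)\,j_c$ and $\ast^\prime\,b = Ja$. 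Because $\ast^\prime$ is an isomorphism by Remark \ref{rem:astisiso}, the second condition determines $b = (\ast^\prime)^{-1}\,Ja$ uniquely from $a$, so the map $g\mapsto a$ is injective and has at most one preimage over each $a$.

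The step carrying the actual content, and the one I would treat most carefully, is surjectivity: I must check that the forced value $b = (\ast^\prime)^{-1}\,Ja$ automatically satisfies the remaining condition $\ast^\prime\,a = (Jb)\,j_c$, so that every $a\in\CC(c,d)$ indeed comes from a $\ast$-morphism. Substituting $b$, applying $J$, and using naturality of $j$ at $a$ reduces this requirement to $(J\ast^\prime)\,\ast^\prime = j_d$, which is exactly the $\ast$-object axiom satisfied by the target $(\ast^\prime : d\to Jd)$. Hence the map is a bijection. Naturality in $(\ast^\prime : d\to Jd)$ is immediate, since for any $\ast$-morphism $h$ into another $\ast$-object one has $(h\,g)\,\iota_1 = h\,(g\,\iota_1)$, and naturality in $c$ follows from $(f\sqcup Jf)\,\iota_1 = \iota_1^\prime\,f$; equivalently, the family of inclusions $\eta_c := \iota_1 : c\to UF(c)$ is the unit of the adjunction. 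This yields $F\dashv U$.
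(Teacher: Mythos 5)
Your proof is correct and is essentially the ``direct computation'' that the paper deliberately omits: the restriction identities $\ast\,\iota_1=(J\iota_2)\,j_c$ and $\ast\,\iota_2=J\iota_1$ are the right way to organize it, and the reduction of surjectivity to the $\ast$-object axiom $(J\ast^\prime)\,\ast^\prime=j_d$ for the target, together with the identification of $\iota_1$ as the unit, completes the adjunction exactly as intended.
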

\begin{rem}\label{rem:astObjlim}
\cite[Lemma 5]{Jacobs} shows that $\astObj(\CC,J,j)$ inherits 
all limits and colimits that exist in $\CC$. These are preserved 
by the forgetful functor $U : \astObj(\CC,J,j) \to \CC$. 
\end{rem}

As noted in \cite[Lemma 6]{Jacobs}, the assignment 
of the categories of $\ast$-objects extends to a $2$-functor 
\begin{flalign}\label{eqn:astObj2fun}
\astObj\, :\,  \ICat ~\longrightarrow~ \Cat \quad.
\end{flalign}
Concretely, this $2$-functor is given by the following assignment: 
\begin{itemize}
\item an involutive category $(\CC,J,j)$ is mapped to its category of $\ast$-objects $\astObj(\CC,J,j)$;
\item an involutive functor $(F,\nu) : (\CC,J,j)\to (\CC^\prime,J^\prime,j^\prime)$
is mapped to the functor $\astObj(F,\nu) : \astObj(\CC,J,j)\to \astObj(\CC^\prime,J^\prime,j^\prime)$
that acts on objects as
\begin{flalign}
\astObj(F,\nu)\big(\ast :c\to Jc\big)  ~:= ~\big(\xymatrix@C=1.5em{Fc \ar[r]^-{F\ast} & FJc \ar[r]^-{\nu_c} & J^\prime Fc} \big)
\end{flalign}
and on morphisms as $F$;
\item an involutive natural transformation
$\zeta : (F,\nu) \to (G,\chi)$  is mapped to the natural transformation
$\astObj(\zeta) : \astObj(F,\nu) \to \astObj(G,\chi)$ with components
$\astObj(\zeta)_{(\ast : c\to Jc)}^{} :=  \zeta_c$,
for all $(\ast :c\to Jc)\in\astObj(\CC,J,j)$.
\end{itemize}
Recalling the trivial involutive categories from Example \ref{ex:trivial},
we obtain another $2$-functor
\begin{flalign}\label{eqn:triv}
\triv\, :\,  \Cat~ \longrightarrow ~ \ICat \quad.
\end{flalign}
Concretely, this $2$-functor assigns to a category $\CC$
the trivial involutive category $(\CC,\ID_\CC, \id_{\ID_\CC})$,
to a functor $F : \CC\to \CC^\prime$ the involutive functor
$(F,\id_F) : (\CC,\ID_\CC, \id_{\ID_\CC}) \to (\CC^\prime ,\ID_{\CC^\prime}, \id_{\ID_{\CC^\prime}})$, and to
a natural transformation $\zeta : F\to G$ the involutive natural transformation
$\zeta : (F,\id_F) \to (G,\id_G)$.
\begin{theo}\label{theo:trivastObjadjunction}
The $2$-functors \eqref{eqn:astObj2fun} and \eqref{eqn:triv}
form a $2$-adjunction 
\begin{flalign}\label{eqn:trivastObjadjunction}
\xymatrix{
\triv \,:\, \Cat~\ar@<0.5ex>[r]&\ar@<0.5ex>[l]  ~\ICat \,:\, \astObj\quad.
}
\end{flalign}
The unit $\eta : \ID_{\Cat} \to \astObj\,\triv$ and counit
$\epsilon : \triv \, \astObj\to \ID_{\ICat}$ $2$-natural transformations
are stated explicitly in the proof below.
\end{theo}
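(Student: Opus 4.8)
The plan is to write down the unit and counit explicitly and then reduce the entire statement to the observation that the coherence condition making the counit an involutive functor is \emph{literally} the defining axiom of a $\ast$-object. The conceptual engine behind the whole result—worth recording at the start—is that, for the trivial source, an involutive functor $(\CC,\ID_\CC,\id_{\ID_\CC}) \to (\DD,K,k)$ is exactly the datum of a functor $G : \CC \to \DD$ together with a natural transformation $\nu : G \to KG$ whose components $\nu_c : Gc \to KGc$ satisfy $K\nu_c\circ \nu_c = k_{Gc}$ (this is what \eqref{eqn:invfundiagram} becomes), i.e.\ precisely a functor $\CC \to \astObj(\DD,K,k)$. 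This bijection is the $1$-categorical shadow of the claimed $2$-adjunction, and the unit and counit below are just its universal instances.

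First I would define the counit. For an involutive category $(\CC,J,j)$ I set $\epsilon_{(\CC,J,j)} := (U,\nu)$, where $U : \astObj(\CC,J,j)\to\CC$ is the forgetful functor $(\ast:c\to Jc)\mapsto c$ and $\nu : U\to JU$ has components $\nu_{(\ast:c\to Jc)} := \ast$. Naturality of $\nu$ is exactly the $\ast$-morphism condition of Definition \ref{def:astobject}, and the involutive-functor coherence \eqref{eqn:invfundiagram}, which for the trivial source $\triv\,\astObj(\CC,J,j)=(\astObj(\CC,J,j),\ID,\id)$ reduces to $(J\nu)\,\nu = j\,U$, evaluates at a $\ast$-object $(\ast:c\to Jc)$ to $J\ast\circ\ast = j_c$, i.e.\ the $\ast$-object axiom. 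Hence $\epsilon_{(\CC,J,j)}$ is a genuine involutive functor. Dually, for the unit I would take $\eta_\CC : \CC\to\astObj\,\triv(\CC)=\astObj(\CC,\ID_\CC,\id_{\ID_\CC})$ sending $c$ to the $\ast$-object $(\id_c:c\to c)$ (legitimate since $\id_c^2=\id_c$, cf.\ Example \ref{ex:trivial2}) and acting as the identity on morphisms.

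Next I would verify $2$-naturality of $\eta$ and $\epsilon$ and the two triangle identities; all of these are direct, the components being so rigid (identities and forgetful functors) that the naturality squares for $1$- and $2$-morphisms collapse once one feeds in the explicit description of the $2$-functor $\astObj$ on involutive functors and involutive natural transformations. For the first triangle identity, tracing $d=(\ast:c\to Jc)$ through $\astObj(\epsilon_{(\CC,J,j)})\circ\eta_{\astObj(\CC,J,j)}$ gives $d\mapsto(\id_d)\mapsto(c\xrightarrow{\id_c}c\xrightarrow{\ast}Jc)=d$ by the explicit formula for $\astObj(U,\nu)$, so it holds on the nose. For the second, the composition law of Remark \ref{rem:2cat}(ii) applied to $\epsilon_{\triv(\CC)}\circ\triv\,\eta_\CC$ yields underlying functor $U\circ\eta_\CC=\ID_\CC$ and involutive structure the whiskering $\nu\,\eta_\CC$, whose component at $c$ is $\nu_{(\id_c:c\to c)}=\id_c$; thus the composite equals $\ID_{\triv(\CC)}=(\ID_\CC,\id_{\ID_\CC})$.

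The computations are routine, so there is no genuine obstacle; the only point demanding care is the bookkeeping of whiskerings and the composition law for involutive functors in Remark \ref{rem:2cat}(ii), together with checking that the chosen $\nu$ really is natural and coherent. That coherence is the one place where the $\ast$-object axiom enters, and recognizing this coincidence is the single idea that makes the proof transparent.
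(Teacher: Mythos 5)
Your proposal is correct and follows essentially the same route as the paper: you construct the identical unit $\eta_\CC : c \mapsto (\id_c : c \to c)$ and counit $\epsilon_{(\CC,J,j)} = (U,\nu)$ with $\nu_{(\ast : c\to Jc)} = \ast$, and the verifications you spell out (the coherence \eqref{eqn:invfundiagram} collapsing to the $\ast$-object axiom, and the two triangle identities) are precisely the ``elementary check'' the paper leaves implicit.
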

\begin{proof}
The component at $\CC\in\Cat$ of the $2$-natural transformation 
$\eta$ is the functor
\begin{flalign}
\eta_\CC \,:\, \CC ~\longrightarrow~ \astObj\big(\triv(\CC)\big)
\end{flalign}
that equips objects with their identity involution (cf.\ Example \ref{ex:trivial2}),
i.e.\ $c\mapsto (\id_c : c\to c)$.
The component at $(\CC,J,j)\in\ICat$ of the $2$-natural transformation $\epsilon$
is the involutive functor
\begin{flalign}
\epsilon_{(\CC,J,j)} = (U,\nu) \,:\, \triv\big(\astObj(\CC,J,j)\big)~\longrightarrow ~(\CC,J,j)\quad,
\end{flalign}
where $U : \astObj(\CC,J,j)\to \CC$ is the forgetful functor $(\ast :c\to Jc)\mapsto c$
and its involutive structure $\nu : U \to JU$ is the natural transformation defined by the components
$\nu_{(\ast:c\to Jc)} = \ast : c\to Jc$, for all $(\ast:c\to Jc) \in \astObj(\CC,J,j)$.
An elementary check shows that $\eta$ and $\epsilon$ are indeed $2$-natural transformations
that satisfy the triangle identities, hence \eqref{eqn:trivastObjadjunction} is a 
$2$-adjunction with unit $\eta$ and counit $\epsilon$.
\end{proof}

\begin{rem}\label{rem:astObjCart}
Notice that both $\Cat$ and $\ICat$ carry
a Cartesian monoidal structure, which is concretely given by the product categories 
$\CC\times \DD$ in $\Cat$ and the product involutive categories 
$(\CC,J,j)\times (\DD,K,k) = (\CC\times \DD,J\times K,j\times k)$
in $\ICat$. Because $\astObj$ is a right adjoint functor, it follows that there are canonical isomorphisms
\begin{flalign}
\astObj \big( (\CC,J,j) \times (\DD,K,k) \big) ~
\cong ~ \astObj(\CC,J,j) \times \astObj(\DD,K,k) \quad, 
\end{flalign}
for all involutive categories $(\CC,J,j)$ and $(\DD,K,k)$. 
\end{rem}

We conclude this section with a useful result 
that allows us to detect involutive categories 
carrying a trivial involutive structure. 
\begin{theo}\label{theo:iso2trivial}
Let $(\CC,J,j)$ be an involutive category. Any section
$\ast : \CC\to \astObj(\CC,J,j)$ of the forgetful functor $U : \astObj(\CC,J,j)\to \CC$
canonically determines an $\ICat$-isomorphism between $(\CC,J,j)$
and the trivial involutive category $(\CC,\ID_{\CC},\id_{\ID_\CC})$.
In particular, if a section of $U$ exists, then the involutive categories
$(\CC,J,j)$ and  $(\CC,\ID_{\CC},\id_{\ID_\CC})$ are isomorphic.
\end{theo}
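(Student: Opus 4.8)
The plan is to recognize that the section $\ast$ is precisely the data of an involutive-functor structure on the identity functor $\ID_\CC$, and that this structure is invertible. First I would unpack the section. Since $U\,\ast = \ID_\CC$, for each object $c\in\CC$ the $\ast$-object $\ast(c)$ has underlying object $c$, so it amounts to a $\CC$-morphism $\ast_c : c\to Jc$ satisfying the $\ast$-object condition $(J\ast_c)\,\ast_c = j_c$. Functoriality of the section, together with $U\,\ast = \ID_\CC$, says exactly that every $\CC$-morphism $f : c\to c'$ is a $\ast$-morphism, i.e.\ $\ast_{c'}\,f = (Jf)\,\ast_c$; this is nothing but the naturality of the family $(\ast_c)_{c\in\CC}$, viewed as a natural transformation $\ast : \ID_\CC \to J$. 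By Remark \ref{rem:astisiso} each component $\ast_c$ is invertible (with inverse $j_c^{-1}\,J\ast_c$), so $\ast : \ID_\CC\to J$ is a natural isomorphism.

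Next I would assemble the isomorphism. Taking $\ID_\CC$ as underlying functor and $\ast$ as involutive structure, I claim that $(\ID_\CC,\ast) : (\CC,\ID_\CC,\id_{\ID_\CC}) \to (\CC,J,j)$ is an involutive functor. Indeed, because the source carries the trivial involutive structure, the coherence diagram \eqref{eqn:invfundiagram} of Definition \ref{def:invfunandnat} reduces componentwise to the single identity $j_c = (J\ast_c)\,\ast_c$, which is precisely the $\ast$-object condition recorded above; hence coherence holds automatically.

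Finally I would exhibit the inverse. The natural candidate is $(\ID_\CC,\ast^{-1}) : (\CC,J,j)\to (\CC,\ID_\CC,\id_{\ID_\CC})$, where $\ast^{-1}$ is the inverse natural isomorphism from the first step. Verifying that this is again an involutive functor is the one step requiring care: its coherence condition reduces componentwise to $j_c = \ast_{Jc}\,\ast_c$, which is not literally the $\ast$-object identity but follows from it by applying the naturality square of $\ast$ to the morphism $\ast_c : c\to Jc$ itself, giving $\ast_{Jc}\,\ast_c = (J\ast_c)\,\ast_c = j_c$. With both involutive functors in hand, a short computation using the composition rule of Remark \ref{rem:2cat}(ii) shows that the two composites carry the involutive structures $\ast^{-1}\,\ast = \id_{\ID_\CC}$ and $\ast\,\ast^{-1} = \id_J$, so they equal the identity involutive functors $\ID_{(\CC,\ID_\CC,\id_{\ID_\CC})}$ and $\ID_{(\CC,J,j)}$ respectively. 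This exhibits $(\ID_\CC,\ast)$ as an $\ICat$-isomorphism, canonically built from the section $\ast$. I expect the only (and mild) obstacle to be the bookkeeping of whiskerings in the coherence diagrams and noticing that the inverse functor's coherence is supplied by naturality of $\ast$ at $\ast_c$ rather than by the defining $\ast$-object identity; everything else is formal once the section is seen as the natural isomorphism $\ast:\ID_\CC\to J$.
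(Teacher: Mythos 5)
Your proposal is correct and follows exactly the paper's argument: read the section as a natural transformation $\ast:\ID_\CC\to J$, check that $(\ID_\CC,\ast)$ is an involutive functor out of the trivial involutive category, and invert it via $(\ID_\CC,\ast^{-1})$. You simply spell out the verifications the paper calls ``straightforward,'' including the correct observation that the coherence of the inverse uses naturality of $\ast$ at $\ast_c$ together with the $\ast$-object identity.
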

\begin{proof}
A section $\ast : \CC\to \astObj(\CC,J,j)$ of $U$
assigns to each $c\in\CC$ a $\ast$-object $\ast_c^{} : c\to Jc$ 
and to each $\CC$-morphism $f :c\to c^\prime$ a $\ast$-morphism
\begin{flalign}\label{eqn:natast}
\xymatrix{
\ar[d]_-{\ast_c^{}}c \ar[rr]^-{f} && c^\prime\ar[d]^-{\ast_{c^\prime}^{}}\\
Jc \ar[rr]_-{Jf} && Jc^\prime
}
\end{flalign}
Notice that this diagram implies that $\ast_c $ are the components of a 
natural transformation $\ast : \ID_{\CC}\to J$. It is straightforward to check that
$(\ID_\CC,\ast) : (\CC,\ID_{\CC},\id_{\ID_\CC}) \to (\CC,J,j)$ is an involutive functor,
which is invertible via the involutive functor $(\ID_\CC,\ast^{-1}) :  (\CC,J,j)\to (\CC,\ID_{\CC},\id_{\ID_\CC}) $.
\end{proof}

\begin{cor}
The involutive category $(\Sigma_\CCC,\mathrm{Rev},\id_{\ID_{\Sigma_\CCC}})$ 
of $\CCC$-profiles equipped with reversal as involutive structure
(cf.\ Examples \ref{ex:Cprofiles} and \ref{ex:Cprofiles2})
is isomorphic to the trivial involutive category $(\Sigma_\CCC,\ID_{\Sigma_\CCC},\id_{\ID_{\Sigma_\CCC}})$.
\end{cor}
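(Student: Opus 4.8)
The plan is to reduce everything to Theorem~\ref{theo:iso2trivial}. That theorem says that whenever the forgetful functor $U : \astObj(\CC,J,j)\to \CC$ admits a section, the involutive category $(\CC,J,j)$ is $\ICat$-isomorphic to the trivial involutive category over $\CC$, and moreover the section is precisely the data from which the comparison involutive functor is built. Applied to $\CC = \Sigma_\CCC$ with $J = \mathrm{Rev}$ and $j = \id_{\ID_{\Sigma_\CCC}}$, the whole corollary therefore collapses to the task of exhibiting one section $\ast : \Sigma_\CCC \to \astObj(\Sigma_\CCC,\mathrm{Rev},\id_{\ID_{\Sigma_\CCC}})$ of $U$.

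Such a section, however, has already been constructed: it is the functor $\rho$ of Example~\ref{ex:Cprofiles2}, which sends each profile $\und{c}$ to the $\ast$-object $\rho^{}_{\vert\und{c}\vert} : \und{c} \to \und{c}\,\rho^{}_{\vert\und{c}\vert} = \mathrm{Rev}(\und{c})$ given by the order-reversal permutation. I would first recall why this is a bona fide $\ast$-object. Since the involutive structure is trivial ($j = \id$), the defining condition of Definition~\ref{def:astobject} reads $\mathrm{Rev}(\rho^{}_{\vert\und{c}\vert})\,\rho^{}_{\vert\und{c}\vert} = e$; and because the formula for $\mathrm{Rev}$ on morphisms together with $(\rho^{}_{\vert\und{c}\vert})^2 = e$ gives $\mathrm{Rev}(\rho^{}_{\vert\und{c}\vert}) = \rho^{}_{\vert\und{c}\vert}\,\rho^{}_{\vert\und{c}\vert}\,\rho^{}_{\vert\und{c}\vert} = \rho^{}_{\vert\und{c}\vert}$, the condition reduces to $(\rho^{}_{\vert\und{c}\vert})^2 = e$, which holds since order reversal is an involution.

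It then remains to observe that $\rho$ is functorial and lands in $\ast$-morphisms, i.e.\ that for every $\Sigma_\CCC$-morphism $\sigma : \und{c} \to \und{c}\sigma$ the $\ast$-morphism square of Definition~\ref{def:astobject} commutes. Writing $n = \vert\und{c}\vert = \vert\und{c}\sigma\vert$, this square is exactly the identity $\mathrm{Rev}(\sigma)\,\rho^{}_n = \rho^{}_n\,\sigma$, which follows immediately from $\mathrm{Rev}(\sigma) = \rho^{}_n\,\sigma\,\rho^{}_n$ and $(\rho^{}_n)^2 = e$. This single elementary computation is the only genuine content of the argument; everything else is bookkeeping already recorded in Example~\ref{ex:Cprofiles2}, so I do not anticipate any real obstacle. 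With the section $\rho$ in hand, Theorem~\ref{theo:iso2trivial} produces the desired isomorphism, the comparison being the involutive functor $(\ID_{\Sigma_\CCC},\rho) : (\Sigma_\CCC,\ID_{\Sigma_\CCC},\id_{\ID_{\Sigma_\CCC}}) \to (\Sigma_\CCC,\mathrm{Rev},\id_{\ID_{\Sigma_\CCC}})$, with $\rho$ regarded as a natural transformation $\ID_{\Sigma_\CCC} \to \mathrm{Rev}$ and inverse $(\ID_{\Sigma_\CCC},\rho^{-1})$.
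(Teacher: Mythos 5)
Your proposal is correct and follows exactly the route the paper intends: the corollary is stated without a separate proof precisely because the section $\rho$ of the forgetful functor $U$ was already constructed in Example \ref{ex:Cprofiles2}, and Theorem \ref{theo:iso2trivial} then yields the isomorphism via the involutive functor $(\ID_{\Sigma_\CCC},\rho)$. Your explicit verifications that $\rho_{\vert\und{c}\vert}$ is a $\ast$-object and that $\rho$ is natural are the same elementary permutation identities ($\rho_n^2=e$ and $\mathrm{Rev}(\sigma)=\rho_n\sigma\rho_n$) that the paper leaves implicit.
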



\section{\label{sec:invmoncat}Involutive structures on monoidal categories}
In this section we review involutive (symmetric) monoidal categories 
and $\ast$-monoids therein. We again shall follow mostly the definitions 
and conventions of Jacobs \cite{Jacobs}. Our main goal is to
clarify and work out the $2$-functorial behavior of the assignment
of the categories of $\ast$-objects and monoids to involutive (symmetric) monoidal categories.
To fix our notations, we start with a brief review of some basic aspects 
of (symmetric) monoidal categories and monoids therein.

\subsection{(Symmetric) monoidal categories and monoids}
Recall that a {\em monoidal category} $(\CC,\otimes,I,\alpha,\lambda,\rho)$ consists of
a category $\CC$, a functor $\otimes : \CC\times\CC\to \CC$, an object $I\in \CC$ 
and three natural isomorphisms
\begin{subequations}
\begin{flalign}
\alpha \,:\,  \otimes\,(\otimes\times \ID_{\CC})~&\longrightarrow~\otimes\, (\ID_\CC\times \otimes)\quad,\\
\lambda \,:\, I\otimes (-) ~&\longrightarrow~\ID_{\CC}\quad,\\ 
\rho\,:\, (-)\otimes I ~&\longrightarrow~\ID_{\CC}\quad,
\end{flalign}
\end{subequations}
which satisfy the pentagon and triangle identities. 
We follow the usual abuse of notation
and often denote a monoidal category by its underlying category $\CC$. 
The associator $\alpha$ and the unitors $\lambda$ and $\rho$ will always be suppressed. 
Given two monoidal categories $\CC$ and $\CC^\prime$, a  {\em (lax) monoidal functor} from $\CC$ to $\CC^\prime$
is a triple $(F,F_2,F_0)$ consisting of a functor $F : \CC\to\CC^\prime$, a natural transformation
\begin{subequations}
\begin{flalign}
F_2 \,:\, \otimes^\prime\,(F\times F)~\longrightarrow~F\,\otimes \quad,
\end{flalign}
and a $\CC^\prime$-morphism
\begin{flalign}
F_0 \,:\, I^\prime ~\longrightarrow~F I\quad,
\end{flalign}
\end{subequations}
which are required to satisfy the usual coherence conditions involving the associators and unitors.
We often denote a monoidal functor by its underlying functor $F : \CC\to \CC^\prime$.
A {\em monoidal natural transformation} $\zeta : F \to G$ between 
monoidal functors $F=(F,F_2,F_0)$ and $G=(G,G_2,G_0)$ is a natural transformation
$\zeta : F\to G$ satisfying
\begin{flalign}
\xymatrix{
\ar[d]_-{F_2}\otimes^\prime\,(F\times F)\ar[rr]^-{\otimes^\prime\,(\zeta\times\zeta)} && \otimes^\prime\,(G\times G)\ar[d]^-{G_2}&& &\ar[dl]_-{F_0}I^\prime\ar[dr]^-{G_0}&\\
F\,\otimes \ar[rr]_-{\zeta\,\otimes}&& G\,\otimes && F I \ar[rr]_-{\zeta_I^{}}&&G I
}
\end{flalign}
\begin{propo}
Monoidal categories, (lax) monoidal functors and monoidal natural transformations
form a $2$-category $\MonCat$.
\end{propo}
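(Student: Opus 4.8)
The plan is to realize $\MonCat$ as structure living over the $2$-category $\Cat$ of categories, functors and natural transformations: the underlying categories, functors and natural transformations already assemble into $\Cat$, so the only genuine task is to equip composites and identities with monoidal data and to check compatibility with the coherence conditions. I would first declare the identity $1$-cell on a monoidal category $\CC$ to be $(\ID_\CC,\id,\id)$ with the evident structure morphisms, and the composite of lax monoidal functors $(F,F_2,F_0):\CC\to\CC^\prime$ and $(G,G_2,G_0):\CC^\prime\to\CC^{\prime\prime}$ to be the triple $(GF,(GF)_2,(GF)_0)$, where
\[
(GF)_2 \,:\, \otimes^{\prime\prime}(GF\times GF) \xrightarrow{G_2(F\times F)} G\,\otimes^\prime(F\times F)\xrightarrow{GF_2} GF\,\otimes\quad,
\]
using the identification $\otimes^{\prime\prime}(GF\times GF)=\otimes^{\prime\prime}(G\times G)(F\times F)$, and $(GF)_0:I^{\prime\prime}\xrightarrow{G_0}GI^\prime\xrightarrow{GF_0}GFI$.

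The technical heart of the argument is then to verify that this composite again obeys the associativity and unitality coherence axioms of a lax monoidal functor. I would establish the associativity condition by pasting the associativity coherence diagram for $F$, whiskered by $G$, together with the one for $G$, and commuting the remaining squares by naturality of $G_2$; the two unit conditions follow in the same spirit from those of $F$ and $G$ together with naturality of $G_2$ and $G_0$. That the identity triple $(\ID_\CC,\id,\id)$ satisfies the coherence conditions is immediate.

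For the $2$-cells I would check that monoidal natural transformations are closed under vertical and horizontal composition. Given monoidal $\zeta:F\to G$ and $\zeta^\prime:G\to H$, the two defining squares of $\zeta$ and $\zeta^\prime$ paste to the square for the vertical composite $\zeta^\prime\zeta$, using functoriality of $\otimes^\prime$; horizontal composites are handled analogously by invoking the explicit form of the composite structure morphisms above, and the identity natural transformation is visibly monoidal.

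Finally, all the $2$-category axioms, namely associativity and unitality of $1$-cell composition, the unit laws and the interchange law for $2$-cells, hold for $\MonCat$ because they already hold for the underlying data in $\Cat$ and the monoidal structure morphisms on composites were defined precisely so as to be compatible with these operations. The main obstacle is the coherence verification of the second paragraph: carefully pasting the associativity and unit coherence diagrams of $F$ and $G$ while tracking the whiskerings and the applications of naturality. Everything else reduces to the corresponding, already established facts in $\Cat$.
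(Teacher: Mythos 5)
Your proposal is correct and follows the standard verification that the paper implicitly relies on when stating this proposition without proof: composites of lax monoidal functors are given exactly by the whiskered composites $(GF)_2$ and $(GF)_0$ you write down, coherence follows by pasting the coherence diagrams of $F$ and $G$ with naturality of $G_2$, and the $2$-category axioms reduce to those of $\Cat$. Nothing essential is missing.
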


A {\em symmetric monoidal category} is a monoidal category $\CC$ together with
a natural isomorphism called \emph{braiding}
\begin{flalign}
\tau\,:\, \otimes ~\longrightarrow~\otimes^{\op} := \otimes\,\sigma\quad
\end{flalign}
from the tensor product to the opposite tensor product, where $\sigma : \CC\times\CC\to\CC\times\CC$ is 
the flip functor $(c_1,c_2)\mapsto (c_2,c_1)$, which
satisfies the hexagon identities and the symmetry constraint
\begin{flalign}
\xymatrix{
\ar[dr]_-{\tau}\otimes \ar[rr]^-{\id_\otimes} && \otimes =\otimes\,\sigma^2\\
&\otimes\,\sigma \ar[ru]_-{\tau\,\sigma}&
}
\end{flalign}
We often denote a symmetric monoidal category by its underling category $\CC$. 
A {\em symmetric monoidal functor}
is a monoidal functor $F : \CC\to \CC^\prime$  that preserves the braidings, i.e.\
\begin{flalign}
\xymatrix{
\ar[d]_-{F_2} \otimes^\prime\,(F\times F)\ar[rr]^-{\tau^\prime \,(F\times F)} &&  \otimes^{\prime}\,\sigma (F\times F)=\otimes^{\prime}\,(F\times F)\sigma \ar[d]^-{F_2\sigma}\\
F\otimes \ar[rr]_-{F\tau}&&F\otimes \sigma
}
\end{flalign}
commutes. A {\em symmetric monoidal natural transformation} is just a monoidal natural transformation
between symmetric monoidal functors.
\begin{propo}
Symmetric monoidal categories, symmetric monoidal functors and symmetric 
monoidal natural transformations form a $2$-category $\SMonCat$.
\end{propo}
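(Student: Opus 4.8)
The plan is to realize $\SMonCat$ as a sub-$2$-category of the $2$-category $\MonCat$ established above, thereby inheriting almost all of the required structure for free. The point is that a symmetric monoidal category is a monoidal category equipped with the extra datum of a braiding, a symmetric monoidal functor is a monoidal functor satisfying the additional braiding-preservation axiom, and a symmetric monoidal natural transformation is \emph{precisely} a monoidal natural transformation between two symmetric monoidal functors, with no further condition imposed. Consequently, the composition operations on $1$-cells and $2$-cells, as well as the identity $1$-cells and $2$-cells, are all defined exactly as in $\MonCat$. It then suffices to check that these inherited operations are closed within the symmetric setting, and that the $2$-category axioms, which already hold in $\MonCat$, restrict correctly.

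Concretely, I would first verify that the identity monoidal functor $\ID_\CC$ on a symmetric monoidal category $\CC$ is symmetric monoidal: since its structure morphisms $(\ID_\CC)_2$ and $(\ID_\CC)_0$ are identities, the braiding-preservation square collapses to the tautology $\tau = \tau$. Next, for composable symmetric monoidal functors $F : \CC\to\CC^\prime$ and $G : \CC^\prime\to\CC^{\prime\prime}$ with the composite monoidal structure inherited from $\MonCat$, I would check that $GF$ again preserves braidings. For the $2$-cells there is nothing additional to impose: vertical and horizontal composition of monoidal natural transformations is already defined in $\MonCat$, and since \emph{any} monoidal natural transformation between symmetric monoidal functors automatically qualifies as symmetric monoidal, these compositions remain within $\SMonCat$ as soon as the source and target $1$-cells are known to be symmetric monoidal, which is exactly the closure just established.

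The one genuine computation, and hence the main (though entirely routine) obstacle, is the braiding-preservation diagram chase for the composite $GF$. One pastes the braiding-preservation square for $F$, namely $(F_2\,\sigma)\,\big(\tau^\prime\,(F\times F)\big) = (F\tau)\,F_2$, together with the corresponding square for $G$ suitably whiskered by $F\times F$, and invokes the naturality of $G_2$ with respect to the braiding $\tau^\prime$; this yields the braiding-preservation square for the composite structure $(GF)_2$. All remaining $2$-category axioms — associativity and unitality of $1$-cell composition, the interchange law relating vertical and horizontal composition of $2$-cells, and the behavior of the identity $2$-cells — require no independent verification, since the underlying data, compositions, and identities of $\SMonCat$ coincide verbatim with those of $\MonCat$, where these axioms are already known to hold. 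This establishes that $\SMonCat$ is a $2$-category.
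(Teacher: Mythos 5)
Your proof is correct: the paper states this proposition without proof, treating it as a standard fact, and your argument — realizing $\SMonCat$ as a sub-$2$-category of $\MonCat$ whose only nontrivial closure check is that composites of symmetric monoidal functors again preserve braidings — is exactly the routine verification being omitted. The pasting of the two braiding-preservation squares (the one for $G$ whiskered by $F\times F$ and the image under $G$ of the one for $F$) does yield the square for $(GF)_2=(GF_2)\,(G_2(F\times F))$, so nothing is missing.
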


\begin{defi}\label{def:monoid}
A {\em monoid} in a (symmetric) monoidal category $\CC$
is a triple $(M,\mu,\eta)$ consisting of an object $M\in\CC$
and two $\CC$-morphisms $\mu : M\otimes M\to M$ (called {\em multiplication})
and $\eta : I\to M$ (called {\em unit}) satisfying the associativity and unitality axioms.
A {\em monoid morphism} $f : (M,\mu,\eta)\to (M^\prime,\mu^\prime,\eta^\prime)$
is a $\CC$-morphism $f:M\to M^\prime$ preserving multiplications and units.
We denote the category of monoids in $\CC$ by $\Mon(\CC)$.
\end{defi}

The assignment of the categories of monoids extends to a $2$-functor
\begin{flalign}\label{eqn:Mon2fun}
\Mon \,:\, \mathbf{(S)MCat} ~\longrightarrow~ \Cat \quad. 
\end{flalign}
Concretely, this $2$-functor is given by the following assignment: 
\begin{itemize}
\item a (symmetric) monoidal category $\CC$ is mapped to its category of monoids $\Mon(\CC)$;
\item a (symmetric) monoidal functor $F : \CC\to \CC^\prime$ is mapped to 
the functor $\Mon(F) : \Mon(\CC)\to \Mon(\CC^\prime)$ that acts on objects as
\begin{flalign}
\Mon(F)\big(M,\mu,\eta\big) \,:= \,\Big(FM, \xymatrix@C=2.1em{FM\otimes^\prime FM \ar[r]^-{{F_2}_{M,M}}& F(M\otimes M)
\ar[r]^-{F\mu} & FM}\!\!,\xymatrix@C=1.1em{I^\prime \ar[r]^-{F_0}& FI\ar[r]^-{F\eta}& FM}\!\Big)  
\end{flalign}
and on morphisms as $F$;
\item  a (symmetric) monoidal natural transformation
$\zeta : F\to G$ is mapped to the natural transformation $\Mon(\zeta) : \Mon(F)\to \Mon(G)$ 
with components $\Mon(\zeta)_{(M,\mu,\eta)} := \zeta_M$, for all $(M,\mu,\eta)\in\Mon(\CC)$.
\end{itemize}

\subsection{\label{sec:ISMCat}Involutive (symmetric) monoidal categories}
The following definition of an involutive (symmetric) monoidal category 
is due to \cite{Jacobs}. We prefer this definition 
over the one in \cite{Egger, BeggsMajid} as 
it has the advantage that the category of $\ast$-objects inherits a monoidal structure 
(cf.\ \cite[Proposition 1]{Jacobs} and Proposition \ref{propo:astObjmonoidal} in the present paper). 
This has interesting consequences for the theory of involutive monads in \cite{Jacobs}
and the developments in our present paper.

\begin{defi}\label{def:invmoncat}
An {\em involutive (symmetric) monoidal category} is a triple $(\CC,J,j)$ consisting of
a (symmetric) monoidal category $\CC$,
a (symmetric) monoidal endofunctor $J = (J,J_2,J_0) : \CC\to \CC$ and a (symmetric) 
monoidal natural isomorphism $j : \ID_{\CC}\to J^2$ satisfying 
\begin{flalign}
jJ \,=\, Jj \,:\, J~\longrightarrow~ J^3\quad.
\end{flalign}
\end{defi}

The following statement is proven in \cite[Lemma 7]{Jacobs}.
\begin{lem}\label{lem:Jstrong}
For any involutive (symmetric) monoidal category,
the (symmetric) monoidal endofunctor  $J = (J,J_2,J_0) : \CC\to\CC$ is {\em strong},
i.e.\ $J_2 : \otimes\, (J\times J) \to J\,\otimes$ and $J_0 : I\to JI$ are isomorphisms.
\end{lem}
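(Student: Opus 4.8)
The plan is to exploit the self-adjunction $J \dashv J$ established in Lemma \ref{lem:Jselfadjoint}, together with the fact that the monoidal natural isomorphism $j : \ID_\CC \to J^2$ provides an explicit inverse-on-the-nose for the ``double application'' of the monoidal structure maps. The strategy is to construct candidate inverses for $J_2$ and $J_0$ by hand, using $J_2$, $J_0$, $j$ and $j^{-1}$, and then verify that they are two-sided inverses by a diagram chase.

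First I would treat the unit constraint $J_0 : I \to JI$. Since $J = (J,J_2,J_0)$ is a monoidal endofunctor, applying $J$ to $J_0$ yields a morphism $JJ_0 : JI \to J^2 I$. Composing with the natural isomorphism component $j_I : I \to J^2 I$ gives $j_I^{-1}\, (J J_0) : JI \to I$. The claim is that this is a two-sided inverse of $J_0$. In one direction, the coherence condition relating $J_0$ to $JJ_0$ through $j$ (which is exactly the unit part of $j : \ID_\CC \to J^2$ being a monoidal natural transformation, i.e.\ $j_I = (JJ_0)\, J_0$ after suppressing associativity) shows that $\big(j_I^{-1}\,(JJ_0)\big)\, J_0 = \id_I$. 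The reverse composite requires using that $j$ is monoidal together with the defining equation $jJ = Jj$; I would feed $J_0 \, \big(j_I^{-1}\,(JJ_0)\big)$ through naturality of $j$ and the condition $jJ = Jj$ to collapse it to $\id_{JI}$.

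Next I would treat the tensor constraint $J_2 : \otimes\,(J\times J) \to J\,\otimes$, which is the main obstacle. The idea is entirely parallel: apply $J$ to $J_2$ to obtain a map $JJ_2$, and precompose and postcompose with appropriate components of $j$ and $j^{-1}$ to manufacture a candidate inverse $\otimes\,(J\times J) \to J\,\otimes$ running in the opposite direction. Concretely, for objects $c,c' \in \CC$ I would build the inverse from the data $j_{c\otimes c'}^{-1}$, $JJ_2$ evaluated at the relevant objects, and the components $j_c, j_{c'}$ of $j$ (which let me identify $J^2 c \otimes J^2 c'$ with the source of $J_2$ after a double application). That the monoidal natural transformation $j : \ID_\CC \to J^2$ is compatible with $J_2$ is precisely the statement that the hexagon-type coherence for $j$ relates $j_{c\otimes c'}$ to $(J J_2)\,J_2\,(j_c \otimes j_{c'})$; this is the identity I would invoke to see that the two composites both reduce to identities.

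The hard part will be bookkeeping in the $J_2$ case: one must carefully track the two applications of $J_2$ (once as $J_2$ itself, once inside $JJ_2$) and insert the associativity/coherence isomorphisms that are being suppressed by the paper's convention, while simultaneously using both that $j$ is a \emph{monoidal} natural isomorphism and that $jJ = Jj$. I expect the cleanest route is to observe that, since $j$ is a monoidal natural isomorphism between the monoidal functors $\ID_\CC$ and $J^2 = J\circ J$, the composite functor $J^2$ is monoidal with structure maps $(JJ_2)\,J_2$ and $(JJ_0)\,J_0$, and $j$ being monoidal says these equal $j_{\otimes}$ and $j_I$ up to the identity functor's (trivial) structure maps. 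Rearranging these equalities immediately exhibits $J_2$ and $J_0$ as split monomorphisms that are also split epimorphisms, hence isomorphisms. In the symmetric case no extra work is needed, since the braiding compatibility of $J$ is not involved in invertibility of $J_2$ and $J_0$.
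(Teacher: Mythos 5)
The paper does not prove this lemma itself; it cites \cite[Lemma 7]{Jacobs}, so there is no in-paper argument to compare against. Judged on its own terms, your plan is the standard one and your treatment of $J_0$ is complete and correct: the unit part of \eqref{eqn:jmonoidal} gives $(JJ_0)\,J_0 = j_I$, whence $j_I^{-1}(JJ_0)$ is a left inverse of $J_0$, and applying $J$ to this identity together with $jJ = Jj$ and naturality of $j$ at $J_0$ shows it is also a right inverse.

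For $J_2$, however, there is a concrete gap in the step you call ``immediate.'' The tensor part of \eqref{eqn:jmonoidal}, written in components, reads $\big(J{J_2}_{c,c'}\big)\,{J_2}_{Jc,Jc'}\,(j_c\otimes j_{c'}) = j_{c\otimes c'}$. The two occurrences of $J_2$ here are components at \emph{different} pairs of objects: what you get directly is that ${J_2}_{Jc,Jc'}$ is a split monomorphism and that $J{J_2}_{c,c'}$ is a split epimorphism --- these are statements about two different morphisms, neither of which is ${J_2}_{c,c'}$ itself. So the identity does not ``immediately exhibit $J_2$ as a split mono that is also a split epi.'' To close the argument you need one more ingredient, namely that $j:\ID_\CC\to J^2$ being a natural isomorphism makes $J$ an equivalence (fully faithful, with every object isomorphic to one of the form $Jd$). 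Then: (i) naturality of $J_2$ along the isomorphisms $j_c, j_{c'}$ transports the split-mono property from ${J_2}_{J(Jc),J(Jc')}$ to ${J_2}_{c,c'}$; and (ii) fullness and faithfulness of $J$ let you descend the splitting of $J{J_2}_{c,c'}$ to a splitting of ${J_2}_{c,c'}$. With both properties in hand for the same morphism, ${J_2}_{c,c'}$ is an isomorphism. Your explicit-inverse variant runs into the same object-mismatch and would need the same fix. This is a repairable omission rather than a wrong approach, but as written the decisive step for $J_2$ is not justified.
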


\begin{rem}
Let us emphasize again and more clearly that our Definition \ref{def:invmoncat} of 
involutive (symmetric) monoidal categories
agrees with the one of Jacobs \cite{Jacobs}. The definitions in \cite{BeggsMajid} and
\cite{Egger} are different because their analog of $J_2$ is order-reversing, i.e.\ a natural 
isomorphism $\otimes^\op\,(J\times J) \to J\,\otimes$. The reason why we consider order-preserving $J_2$ as in
\cite{Jacobs} is that this is better suited for our development of involutive operad theory, 
cf.\ Remark \ref{rem:whynonrev} below.
\end{rem}

\begin{rem}\label{rem:invmoncat1}
The condition for $j : \ID_{\CC}\to J^2$ to be a (symmetric) monoidal natural transformation
explicitly means that the diagrams
\begin{flalign}\label{eqn:jmonoidal}
\xymatrix{
\ar[dd]_-{\id_{\otimes}} \otimes \ar[rr]^-{\otimes\,(j\times j)} && {\otimes} (J^2\times J^2) \ar[d]^-{J_2 (J\times J)} &&&\ar[dr]^-{J_0} I \ar[dl]_-{\id_I}&\\
 && J {\otimes} (J\times J)\ar[d]^-{J\,J_2}&& \ar[d]_-{\id_I} I &&JI\ar[d]^-{J J_0}\\
\otimes \ar[rr]_-{j\,\otimes} &&J^2{\otimes} && I \ar[rr]_-{j_I}&&J^2 I
}
\end{flalign}
commute. One may reinterpret these diagrams as follows: 
The left diagram states that $(\otimes, J_2): (\CC,J,j) \times (\CC,J,j) \to (\CC,J,j)$ 
is an involutive functor on the product involutive category
$(\CC,J,j)\times (\CC,J,j)=(\CC\times \CC,J\times J,j\times j)$,
see also Remark \ref{rem:astObjCart}.
The right diagram states that $(J_0 : I\to JI ) \in\astObj(\CC,J,j)$
is a $\ast$-object in $(\CC,J,j)$. These two structures allow us to 
endow the functor $I \otimes (-) : \CC\to\CC$ with an involutive structure 
$I \otimes J(-) \to J(I\otimes (-))$ defined by the components
\begin{flalign}
\xymatrix@C=3em{I\otimes Jc \ar[r]^-{J_0\otimes \id} & JI\otimes Jc\ar[r]^-{{J_2}_{I,c}} & J(I\otimes c)}\quad,
\end{flalign}
for all $c\in\CC$. An analogous statement holds true for the functor $(-)\otimes I : \CC\to \CC$.
The axioms for the (symmetric) monoidal structure on $J$ can then be reinterpreted as the 
equivalent property that the associator and unitors (as well as the braiding in the symmetric case)
are involutive natural transformations.
\sk

Summing up, we obtain an equivalent description of an involutive (symmetric) monoidal category 
in terms of the following data: 
An involutive category $(\CC,J,j)$, 
an involutive functor $(\otimes,J_2) : (\CC,J,j)\times (\CC,J,j)\to (\CC,J,j)$, 
a $\ast$-object $(J_0 : I\to JI)\in\astObj(\CC,J,j)$ and involutive natural transformations 
for the associator and unitors (as well as the braiding in the symmetric case), 
which satisfy analogous axioms as those for (symmetric) monoidal categories.
This alternative point of view is useful for \eqref{eqn:astObjmonoidal} and \eqref{eqn:astObjunit} below. 
\end{rem}

\begin{ex}\label{ex:trivial3}
For any (symmetric) monoidal category $\CC$, the triple $(\CC,\ID_\CC,\id_{\ID_{\CC}})$,
with $\ID_\CC$ the identity (symmetric) monoidal functor and $\id_{\ID_\CC}$ the identity (symmetric)
monoidal natural transformation, defines an involutive (symmetric) monoidal category.
We call this the {\em trivial involutive (symmetric) monoidal category} over $\CC$.
\end{ex}

\begin{ex}\label{ex:vec3}
Let us equip the category of complex vector spaces $\Vec_\bbC$  with its
standard symmetric monoidal structure where $\otimes$ is the usual tensor product,
$I=\bbC$ is the ground field and $\tau$ is given by the flip maps $\tau_{V,W}^{}:V\otimes W \to W\otimes V
\,,~v \otimes w\mapsto w\otimes v$. The endofunctor 
$\overline{(-)} : \Vec_{\bbC}\to \Vec_{\bbC}$ from Example \ref{ex:vec} can be promoted to a symmetric
monoidal functor by using the canonical maps
${\overline{(-)}_2}_{V,W} :\overline{V}\otimes \overline{W}\to \overline{V\otimes W}$
and complex conjugation $\overline{(-)}_0 : \bbC\to\overline{\bbC}$.
The resulting triple $(\Vec_\bbC,\overline{(-)},\id_{\ID_{\Vec_\bbC}})$ is an involutive symmetric monoidal category.
\end{ex}

\begin{ex}\label{ex:Cprofiles3}
Recall the groupoid of $\CCC$-profiles $\Sigma_\CCC$ from Example \ref{ex:Cprofiles}.
The category $\Sigma_\CCC$ may be equipped with the symmetric monoidal structure given by
concatenation of $\CCC$-profiles, i.e.\ $\und{c}\otimes\und{d} = (c_1,\dots,c_n,d_1,\dots,d_m)$,
$I=\emptyset$ is the empty $\CCC$-profile 
and $\tau_{\und{c},\und{d}}:=\tau\langle\vert\und{c}\vert,\vert\und{d}\vert\rangle: 
\und{c}\otimes\und{d}\to \und{d}\otimes\und{c}$ is the block transposition.
The reversal endofunctor $\mathrm{Rev} : \Sigma_\CCC\to\Sigma_\CCC$ can be promoted to a symmetric
monoidal functor by using 
\begin{flalign}
{\mathrm{Rev}_2}_{\und{c},\und{d}} \,:= \, \tau\langle \vert\und{c}\vert,\vert\und{d}\vert\rangle ~:~
\mathrm{Rev}(\und{c})\otimes\mathrm{Rev}(\und{d})~\longrightarrow  ~\mathrm{Rev}(\und{c}\otimes\und{d})\quad
\end{flalign}
and $\mathrm{Rev}_0:= \id_\emptyset^{} : \emptyset \to \mathrm{Rev}(\emptyset)=\emptyset$.
The resulting triple $(\Sigma_\CCC,\mathrm{Rev},\id_{\ID_{\Sigma_\CCC}})$ is an involutive symmetric
monoidal category.
\end{ex}

\begin{defi}\label{def:invmonfunandnat}
An {\em involutive (symmetric) monoidal functor} $(F,\nu) : (\CC,J,j) \to (\CC^\prime,J^\prime,j^\prime)$
consists of a (symmetric) monoidal functor $F=(F,F_2,F_0) : \CC\to\CC^\prime$ 
and a (symmetric) monoidal natural transformation $\nu : FJ \to J^\prime F$
satisfying the analog of diagram \eqref{eqn:invfundiagram} in Definition \ref{def:invfunandnat}.

An {\em involutive (symmetric) monoidal natural transformation} $\zeta : (F,\nu)\to (G,\chi)$
between involutive (symmetric) monoidal functors $(F,\nu),(G,\chi): (\CC,J,j) \to (\CC^\prime,J^\prime,j^\prime) $
is a natural transformation $\zeta : F\to G$ that is both involutive and (symmetric) monoidal. 
\end{defi}

\begin{propo}
Involutive (symmetric) monoidal categories, involutive (symmetric)
monoidal functors and involutive (symmetric) monoidal 
natural transformations form a $2$-category $\mathbf{I(S)MCat}$. 
\end{propo}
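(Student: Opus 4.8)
The plan is to build $\mathbf{I(S)MCat}$ on top of the two $2$-categories already established, namely $\ICat$ and $\mathbf{(S)MCat}$, by observing that all of its data combine the involutive and the (symmetric) monoidal structures in a compatible fashion. Concretely, an object of $\mathbf{I(S)MCat}$ is at once an object of $\mathbf{(S)MCat}$ and, by Definition \ref{def:invmoncat}, an object of $\ICat$; an involutive (symmetric) monoidal functor $(F,\nu)$ consists of a (symmetric) monoidal functor $F$ together with a natural transformation $\nu : FJ \to J^\prime F$ that is \emph{both} (symmetric) monoidal \emph{and} an involutive structure in the sense of \eqref{eqn:invfundiagram}; and a $2$-cell is a natural transformation that is simultaneously monoidal and involutive. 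I would therefore define identities and both compositions by exactly the same formulas used in $\ICat$ (Remark \ref{rem:2cat}) and in $\mathbf{(S)MCat}$, which agree on the underlying functors and natural transformations.

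The substance of the argument is then a closure check: that these formulas do not leave the class of involutive (symmetric) monoidal $1$- and $2$-cells. For the identity $(\ID_\CC,\id_J)$ there is nothing to verify. For the composite of $(F,\nu)$ and $(F^\prime,\nu^\prime)$, the underlying functor $F^\prime F$ is (symmetric) monoidal because $\mathbf{(S)MCat}$ is a $2$-category, and its involutive structure is $(\nu^\prime F)\,(F^\prime\nu)$ as prescribed by Remark \ref{rem:2cat}. That this composite satisfies the involutive functor axiom is precisely the content of $\ICat$ being a $2$-category, so the only genuinely new point is that $(\nu^\prime F)\,(F^\prime\nu)$ is again a (symmetric) monoidal natural transformation. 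I would deduce this from two elementary facts: whiskering a (symmetric) monoidal natural transformation by a (symmetric) monoidal functor, on either side, yields a (symmetric) monoidal natural transformation; and the vertical composite of (symmetric) monoidal natural transformations is (symmetric) monoidal. The analogous closure for vertical and horizontal composition of $2$-cells follows from the same two facts together with the corresponding closure statements already available in $\ICat$.

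Finally, the $2$-category axioms — associativity and unitality of $1$-cell composition, and the interchange law relating vertical and horizontal composition of $2$-cells — require no separate work. Each such axiom is an equation between pasting composites whose underlying functors and natural transformations already satisfy it in $\ICat$ (equivalently in $\mathbf{(S)MCat}$); and since an involutive (symmetric) monoidal functor or $2$-cell carries no data beyond its underlying functor or natural transformation, these equations lift verbatim to $\mathbf{I(S)MCat}$. Put differently, the forgetful assignments to $\ICat$ and to $\mathbf{(S)MCat}$ are injective on $1$- and $2$-cells, so every such identity may be checked after forgetting, where it holds by hypothesis.

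The step I expect to be the main obstacle is the one compatibility computation flagged above: confirming that whiskering and vertical composition genuinely preserve the coherence conditions defining (symmetric) monoidal natural transformations, so that $(\nu^\prime F)\,(F^\prime\nu)$ and the composites of $2$-cells remain monoidal, in both the plain and the symmetric case. This is routine but is the only place where the monoidal and involutive structures must be made to interact; once it is settled, every remaining clause of the statement is inherited directly from the two $2$-categories constructed earlier in this section.
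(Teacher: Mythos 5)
Your proposal is correct: the paper states this proposition without proof, treating it as routine, and your argument --- reducing everything to closure of the involutive and (symmetric) monoidal conditions under the compositions already defined for $\ICat$ and $\mathbf{(S)MCat}$, with the single substantive check that $(\nu^\prime F)\,(F^\prime\nu)$ remains a (symmetric) monoidal natural transformation, and then inheriting the $2$-category axioms via the faithful forgetful assignments --- is exactly the standard verification the authors have in mind. Nothing is missing.
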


\begin{rem}\label{rem:invmoncat2}
The condition for the natural transformation $\nu: FJ\to J^\prime F$
to be monoidal explicitly means that the diagrams 
\begin{flalign}
\xymatrix{
\otimes^\prime(FJ \times FJ) 
\ar[rr]^-{\otimes^\prime (\nu \times \nu)} \ar[d]_-{F_2(J \times J)}
& & \otimes^\prime (J^\prime F \times J^\prime F) 
\ar[d]^-{J^\prime_2(F \times F)} 
&& & I^\prime \ar[ld]_-{F_0} \ar[rd]^-{J^\prime_0} \\
F \otimes (J \times J) \ar[d]_-{FJ_2} 
& & J^\prime \otimes^\prime (F \times F) \ar[d]^-{J^\prime F_2} 
&& FI \ar[d]_-{FJ_0} & & J^\prime I^\prime \ar[d]^-{J^\prime F_0} \\
FJ\otimes \ar[rr]_-{\nu \otimes} & & J^\prime F\otimes
&& FJI \ar[rr]_-{\nu_I} & & J^\prime FI
}
\end{flalign}
commute. From the perspective established in 
Remark \ref{rem:invmoncat1}, one may reinterpret these
diagrams as follows: The left diagram states that $F_2$ is
an involutive natural transformation 
\begin{flalign}\label{eqnF2tmprem}
F_2\, :\,  (\otimes^\prime, J^\prime_2)\; \big( (F, \nu) \times (F, \nu) \big)
~\longrightarrow~ (F, \nu)\; (\otimes, J_2) 
\end{flalign}
of involutive functors from $(\CC,J,j)\times(\CC,J,j) $ to $(\CC^\prime, J^\prime, j^\prime)$.
The right diagram states that $F_0$ defines a morphism
\begin{flalign}\label{eqnF0tmprem}
F_0\, :\, \big(J^\prime_0 : I^\prime \to J^\prime I^\prime\big)~\longrightarrow~ \astObj(F,\nu) \big(J_0 : I\to JI \big)
\end{flalign}
in the category $\astObj(\CC^\prime,J^\prime,j^\prime)$ of $\ast$-objects in
$(\CC^\prime,J^\prime,j^\prime)$. 
\sk

Summing up, we obtain an equivalent description of an involutive (symmetric) monoidal functor 
in terms of the following data:  An involutive functor $(F,\nu) : (\CC,J,j)\to (\CC^\prime,J^\prime,j)$, 
an involutive natural transformation $F_2$ as in \eqref{eqnF2tmprem}
and a $\ast$-morphism $F_0$ as in \eqref{eqnF0tmprem}, 
which satisfy axioms analogous to those for a (symmetric) monoidal functor.
This alternative point of view is useful for \eqref{eqn:astObjmonoidal2fun1cell} below. 
\end{rem}

\begin{rem}
Let us summarize Remarks \ref{rem:invmoncat1} and \ref{rem:invmoncat2}
by one slogan: Involutive (symmetric) monoidal categories are the same things as
(symmetric) monoidal involutive categories.
\end{rem}

Let $(\CC,J,j)$ be an involutive (symmetric) monoidal category and consider its category
of $\ast$-objects $\astObj(\CC,J,j)$. Making use of the $2$-functor
$\astObj: \ICat \to \Cat$ given in  \eqref{eqn:astObj2fun}, we may equip the category $\astObj(\CC,J,j)$
with a (symmetric) monoidal structure. Concretely, the tensor product
functor is given by
\begin{flalign}\label{eqn:astObjmonoidal}
\xymatrix{
\ar[d]_-{\cong} \astObj(\CC,J,j) \times \astObj(\CC,J,j) \ar[rr]^-{\otimes } && \astObj(\CC,J,j)\\
\astObj\big((\CC,J,j)\times(\CC,J,j)\big) \ar[rru]_-{~~~~~~\astObj(\otimes,J_2)} &&
}
\end{flalign}
where the vertical isomorphism was explained in Remark \ref{rem:astObjCart} 
and the involutive functor $(\otimes,J_2)$ in Remark \ref{rem:invmoncat1}.
The unit object
\begin{flalign}\label{eqn:astObjunit}
\big(J_0 : I\to JI\big) \in \astObj(\CC,J,j)
\end{flalign}
is the $\ast$-object constructed in Remark \ref{rem:invmoncat1}.
The associator and unitors (as well as the braiding in the symmetric case)
are obtained by applying the $2$-functor $\astObj$ to the associator and unitors 
(as well as the braiding in the symmetric case) of $(\CC,J,j)$,
which makes sense because Remark \ref{rem:invmoncat1} 
shows that these are involutive natural transformations.
Let us also mention that the
tensor product of two $\ast$-objects $(\ast : c\to Jc), (\ast^\prime : 
c^\prime\to Jc^\prime)\in\astObj(\CC,J,j)$ explicitly reads as
\begin{flalign}
( \ast : c\to Jc) \otimes (\ast^\prime:c^\prime \to Jc^\prime) \,=\,
\Big( \xymatrix{ c\otimes c^\prime \ar[r]^-{\ast\otimes \ast^\prime}
& Jc\otimes Jc^\prime \ar[r]^-{{J_{2}}_{c,c^\prime}} & J(c\otimes c^\prime) }
\Big)\quad.
\end{flalign}
Summing up, we have proven
\begin{propo}\label{propo:astObjmonoidal}
Let $(\CC,J,j)$ be an involutive (symmetric) monoidal category. Then the category of $\ast$-objects 
$\astObj(\CC,J,j)$ is a (symmetric) monoidal category with 
tensor product \eqref{eqn:astObjmonoidal} 
and unit object \eqref{eqn:astObjunit}. Moreover, if
$(\CC,J,j)$ is also closed, i.e.\ it has internal homs, then 
$\astObj(\CC,J,j)$ is closed too (cf.\ \cite[Proposition 1]{Jacobs}). 
\end{propo}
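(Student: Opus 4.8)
The plan is to exploit the slogan preceding the proposition, namely that an involutive (symmetric) monoidal category is precisely a (symmetric) monoidal object internal to the $2$-category $\ICat$ equipped with its Cartesian product, and to transport this structure along the product-preserving $2$-functor $\astObj : \ICat \to \Cat$. First I would recall from Remark \ref{rem:invmoncat1} that the monoidal data of $(\CC,J,j)$ is packaged as an involutive functor $(\otimes,J_2) : (\CC,J,j)\times(\CC,J,j) \to (\CC,J,j)$, a $\ast$-object $(J_0 : I \to JI)$ serving as the unit, and involutive natural transformations for the associator $\alpha$, the unitors $\lambda,\rho$ (and, in the symmetric case, the braiding $\tau$), all subject to the pentagon, triangle (and hexagon, symmetry) axioms now read as equalities of involutive natural transformations.

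Next I would apply the $2$-functor $\astObj$. Since $\astObj$ is a right $2$-adjoint (Theorem \ref{theo:trivastObjadjunction}), it preserves products, so by Remark \ref{rem:astObjCart} there is the canonical isomorphism $\astObj\big((\CC,J,j)\times(\CC,J,j)\big) \cong \astObj(\CC,J,j) \times \astObj(\CC,J,j)$; precomposing $\astObj(\otimes,J_2)$ with this isomorphism yields exactly the tensor functor \eqref{eqn:astObjmonoidal}, while the unit is the $\ast$-object \eqref{eqn:astObjunit}. Applying $\astObj$ to $\alpha,\lambda,\rho$ (and $\tau$) produces the candidate associator, unitors (and braiding). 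The coherence axioms then come for free: because $\astObj$ is a strict $2$-functor it preserves vertical and horizontal composition and whiskering of $2$-cells, so every commuting diagram of involutive natural transformations (pentagon, triangle, hexagon, symmetry) is sent to the corresponding commuting diagram of natural transformations in $\Cat$. This establishes that $\astObj(\CC,J,j)$ is a (symmetric) monoidal category, and unravelling $\astObj(\otimes,J_2)$ on objects reproduces the explicit formula for the tensor product of two $\ast$-objects displayed just before the proposition.

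Finally, for the closed case I would build internal homs from those of $\CC$. Writing $[c,d]$ for the internal hom in $\CC$, the fact that $J$ is strong monoidal (Lemma \ref{lem:Jstrong}) and an equivalence (via $j : \ID_\CC \cong J^2$) yields a natural isomorphism $J[c,d] \cong [Jc,Jd]$. Given $\ast$-objects $\ast_c : c \to Jc$ and $\ast_d : d \to Jd$, which are isomorphisms by Remark \ref{rem:astisiso}, I would equip $[c,d]$ with the $\ast$-structure given by the composite $[c,d] \xrightarrow{[\ast_c^{-1},\,\ast_d]} [Jc,Jd] \cong J[c,d]$, verify the $\ast$-object axiom, and then check that the hom–tensor adjunction of $\CC$ lifts to $\astObj(\CC,J,j)$.

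I expect the coherence bookkeeping of the monoidal part to be entirely automatic from $2$-functoriality, so the routine verifications there require no real effort. The genuine obstacle is the closed case, where one must track the interplay of $J_2$, $J_0$, $j$ and the isomorphism $J[c,d]\cong[Jc,Jd]$ in order to confirm that $[c,d]$ with its $\ast$-structure really represents the internal-hom functor in $\astObj(\CC,J,j)$. Since this verification is carried out in \cite[Proposition 1]{Jacobs}, I would either reproduce the short check or simply invoke it.
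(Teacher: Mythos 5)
Your proposal matches the paper's own argument: the tensor product, unit, associator, unitors and braiding are obtained exactly by applying the product-preserving $2$-functor $\astObj$ to the involutive packaging of the monoidal data from Remark \ref{rem:invmoncat1}, with coherence inherited from strict $2$-functoriality, and the closed case is delegated to \cite[Proposition 1]{Jacobs}. This is essentially the same route the paper takes in the discussion culminating in ``Summing up, we have proven''.
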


The assignment of the (symmetric) monoidal categories of
$\ast$-objects extends to a $2$-functor
\begin{flalign}\label{eqn:astObjmonoidal2fun}
\astObj \,:\, \mathbf{I(S)MCat} ~\longrightarrow ~\mathbf{(S)MCat}\quad,
\end{flalign}
which we shall denote with an abuse of notation by the same symbol as the
$2$-functor in \eqref{eqn:astObj2fun}.
Concretely, this $2$-functor is given by the following assignment: 
\begin{itemize}
\item an involutive (symmetric) monoidal category $(\CC,J,j)$ is mapped to
the (symmetric) monoidal category $\astObj(\CC,J,j)$ given in Proposition \ref{propo:astObjmonoidal};
\item an involutive (symmetric) monoidal functor $(F,\nu) : (\CC,J,j)\to (\CC^\prime,J^\prime,j^\prime)$
is mapped to the (symmetric) monoidal functor
\begin{subequations}\label{eqn:astObjmonoidal2fun1cell}
\begin{flalign}
\astObj(F,\nu) \,:\, \astObj(\CC,J,j)~ \longrightarrow ~\astObj(\CC^\prime,J^\prime,j^\prime)
\end{flalign}
with underlying functor as in \eqref{eqn:astObj2fun} and (symmetric) monoidal structure
given by
\begin{flalign}
\astObj(F)_2 \,:=\, \astObj(F_2) \quad, \qquad \astObj(F)_0 \,:= \,F_0 \quad,
\end{flalign}
\end{subequations}
where $F_2$ and $F_0$ should be interpreted according to Remark \ref{rem:invmoncat2};
\item an involutive (symmetric) monoidal natural transformation 
$\zeta: (F,\nu) \to (G,\chi)$ is mapped to the (symmetric) monoidal natural transformation 
determined by \eqref{eqn:astObj2fun}.
\end{itemize}

\begin{rem}\label{rem:astObjlift}
Notice that the $2$-functor $\astObj: \mathbf{I(S)MCat} \to \mathbf{(S)MCat}$ 
given in \eqref{eqn:astObjmonoidal2fun} is a lift of the $2$-functor
$\astObj: \ICat \to \Cat$ given in \eqref{eqn:astObj2fun} 
along the forgetful $2$-functors $\mathrm{forget_{\otimes}} : \mathbf{I(S)MCat} \to \ICat$ 
and $\mathrm{forget_{\otimes}} : \mathbf{(S)MCat} \to \Cat$ that forget the (symmetric) 
monoidal structures. More precisely, using the explicit descriptions
of our $2$-functors, one easily confirms that the diagram
\begin{flalign}
\xymatrix{
\mathbf{I(S)MCat} \ar[rr]^-{\astObj} \ar[d]_-{\mathrm{forget}_\otimes} 
& & \mathbf{(S)MCat} \ar[d]^-{\mathrm{forget}_\otimes} \\
\ICat \ar[rr]_-{\astObj} & & \Cat
}
\end{flalign}
of $2$-categories and $2$-functors commutes (on the nose). 
\end{rem}

We conclude this section with a useful result that 
generalizes Theorem \ref{theo:iso2trivial} to the 
(symmetric) monoidal setting. 
Let us first notice that the forgetful functor 
$U : \astObj(\CC,J,j)\to \CC$ satisfies
$\otimes (U\times U) = U\otimes$ and 
$U(J_0 : I \to JI) = I$, hence it
can be promoted to a (symmetric) monoidal functor
via the trivial (symmetric) monoidal structure
$U_2 = \id_{U\otimes}$ and $U_0 = \id_I$.
\begin{theo}\label{theo:iso2trivialmonoidal}
Let $(\CC,J,j)$ be an involutive (symmetric) monoidal category. Any
(symmetric) monoidal section $\ast  : \CC\to \astObj(\CC,J,j)$ of the forgetful (symmetric) monoidal 
functor $U : \astObj(\CC,J,j)\to \CC$ canonically determines an $\mathbf{I(S)MCat}$-isomorphism 
between $(\CC,J,j)$ and the trivial involutive (symmetric) monoidal  category $(\CC,\ID_{\CC},\id_{\ID_\CC})$.
In particular, if such a section of $U$ exists, then the involutive (symmetric) monoidal categories
$(\CC,J,j)$ and  $(\CC,\ID_{\CC},\id_{\ID_\CC})$ are isomorphic.
\end{theo}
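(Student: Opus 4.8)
The plan is to mimic the proof of Theorem \ref{theo:iso2trivial} but to upgrade every piece of data to the (symmetric) monoidal world. Recall that in the non-monoidal case a section $\ast : \CC \to \astObj(\CC,J,j)$ of $U$ produced a natural transformation $\ast : \ID_\CC \to J$ whose components $\ast_c : c \to Jc$ are the underlying $\ast$-objects, and the isomorphism was witnessed by the involutive functor $(\ID_\CC, \ast)$. So first I would extract from the given \emph{(symmetric) monoidal} section $\ast : \CC \to \astObj(\CC,J,j)$ exactly the same natural transformation $\ast : \ID_\CC \to J$, so that $(\ID_\CC, \ast) : (\CC,\ID_\CC,\id_{\ID_\CC}) \to (\CC,J,j)$ is an involutive functor with involutive inverse $(\ID_\CC, \ast^{-1})$, precisely as in Theorem \ref{theo:iso2trivial}. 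The only genuinely new work is to check that this involutive functor is in addition a \emph{(symmetric) monoidal} involutive functor, i.e.\ that its underlying functor $\ID_\CC$ carries a (symmetric) monoidal structure compatible with $\ast$ in the sense of Definition \ref{def:invmonfunandnat}.

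The key idea is that the hypothesis ``$\ast$ is a (symmetric) monoidal section of the (symmetric) monoidal forgetful functor $U$'' is exactly what supplies the missing coherence data. Since $U$ is monoidal with the trivial structure $U_2 = \id$, $U_0 = \id$ (as noted just before the statement), a monoidal section $\ast$ comes equipped with structure morphisms $\ast_2$ and $\ast_0$ in $\astObj(\CC,J,j)$ lying over identities in $\CC$. Unwinding the monoidality of $\ast$ through the description of $\otimes$ on $\astObj(\CC,J,j)$ in \eqref{eqn:astObjmonoidal} and of the unit \eqref{eqn:astObjunit}, these structure morphisms translate into the two statements: (i) $\ast$ is compatible with $J_2$, meaning the diagram
\begin{flalign}
\xymatrix{
c\otimes c^\prime \ar[rr]^-{\ast_{c\otimes c^\prime}} \ar[d]_-{\ast_c \otimes \ast_{c^\prime}} && J(c\otimes c^\prime)\\
Jc\otimes Jc^\prime \ar[rru]_-{~~{J_2}_{c,c^\prime}} &&
}
\end{flalign}
commutes for all $c,c^\prime \in \CC$; and (ii) $\ast$ is compatible with $J_0$, meaning $\ast_I = J_0 : I \to JI$. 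These are precisely the conditions, read through Remark \ref{rem:invmoncat2}, that equip $(\ID_\CC, \ast)$ with the structure of an involutive (symmetric) monoidal functor: condition (i) is the assertion that the identity natural transformation ${(\ID_\CC)}_2 = \id_\otimes$ is an involutive natural transformation in the sense of \eqref{eqnF2tmprem}, and condition (ii) is the assertion that ${(\ID_\CC)}_0 = \id_I$ defines the required $\ast$-morphism \eqref{eqnF0tmprem} from $J_0$ to $\astObj(\ID_\CC,\ast)(\id_I : I\to I)$.

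I would then conclude by observing that the inverse involutive functor $(\ID_\CC, \ast^{-1})$ inherits the trivial (symmetric) monoidal structure as well, since inverting $\ast$ does not disturb the identity structure maps on $\ID_\CC$; compatibility of $\ast^{-1}$ with $J_2$ and $J_0$ follows formally from (i) and (ii) using that $\ast$, $J_2$, $J_0$ are all isomorphisms (Lemma \ref{lem:Jstrong} and Remark \ref{rem:astisiso}). Hence $(\ID_\CC,\ast)$ and $(\ID_\CC,\ast^{-1})$ are mutually inverse $\mathbf{I(S)MCat}$-isomorphisms between $(\CC,\ID_\CC,\id_{\ID_\CC})$ and $(\CC,J,j)$, as claimed. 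In the symmetric case one additionally checks that $\ast$ respects the braiding, but this is automatic because the braiding on $\astObj(\CC,J,j)$ is obtained by applying the $2$-functor $\astObj$ to the braiding on $\CC$, so a symmetric monoidal section is compatible with it by construction. The main obstacle is purely bookkeeping: correctly translating the abstract monoidality of the section $\ast$ into the concrete diagrams (i) and (ii) via the explicit formulas \eqref{eqn:astObjmonoidal}–\eqref{eqn:astObjunit}, after which the verification of the involutive (symmetric) monoidal functor axioms is routine and the proof reduces, as in Theorem \ref{theo:iso2trivial}, to exhibiting the evident inverse.
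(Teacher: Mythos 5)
Your proposal is correct and follows essentially the same route as the paper: the paper's proof likewise observes that, because $U$ carries the trivial (symmetric) monoidal structure and is faithful, the monoidal structure on the section $\ast$ is forced to be trivial — which is precisely your conditions (i) $\ast_{c\otimes c^\prime} = {J_2}_{c,c^\prime}\,(\ast_c\otimes\ast_{c^\prime})$ and (ii) $\ast_I = J_0$ — and then proceeds exactly as in Theorem \ref{theo:iso2trivial}. Your write-up simply spells out the translation through \eqref{eqn:astObjmonoidal}, \eqref{eqn:astObjunit} and Remark \ref{rem:invmoncat2} in more detail than the paper does.
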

\begin{proof}
Using that the (symmetric) monoidal structure on $U$ is trivial, i.e.\ 
$U_2 = \id_{U\otimes}$ and $U_0 = \id_I$, and also that $U$ is a faithful functor,
one observes that the (symmetric) monoidal structure on the (symmetric)
monoidal section $\ast  : \CC\to \astObj(\CC,J,j)$ is necessarily trivial.
The proof then proceeds analogously to the one of Theorem \ref{theo:iso2trivial}.
\end{proof}

\begin{cor}\label{cor:SigmaCCCtrivial}
The involutive symmetric monoidal category $(\Sigma_\CCC,\mathrm{Rev},\id_{\ID_{\Sigma_\CCC}})$ 
of $\CCC$-profiles equipped with reversal as involutive structure 
(cf.\ Example \ref{ex:Cprofiles3}) is isomorphic to the trivial involutive symmetric monoidal
category $(\Sigma_\CCC,\ID_{\Sigma_\CCC},\id_{\ID_{\Sigma_\CCC}})$.
\end{cor}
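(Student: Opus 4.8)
The plan is to produce a symmetric monoidal section of the forgetful functor and then invoke Theorem \ref{theo:iso2trivialmonoidal}. A candidate section is already available: the functor $\rho : \Sigma_\CCC \to \astObj(\Sigma_\CCC,\mathrm{Rev},\id_{\ID_{\Sigma_\CCC}})$ of Example \ref{ex:Cprofiles2}, which sends each profile $\und{c}$ to the canonical $\ast$-object $\rho^{}_{\vert\und{c}\vert} : \und{c} \to \mathrm{Rev}(\und{c})$ and is an ordinary section of $U$. It therefore remains to promote $\rho$ to a \emph{symmetric monoidal} section, after which Theorem \ref{theo:iso2trivialmonoidal} applies directly.

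Since the symmetric monoidal structure on $U$ is trivial and $U$ is faithful, the argument given in the proof of Theorem \ref{theo:iso2trivialmonoidal} forces the structure maps of any monoidal section to be identities. Hence it suffices to verify that $\rho$ is a \emph{strict} symmetric monoidal functor, i.e.\ that it preserves the unit, tensor products and the braiding strictly as $\ast$-objects. Preservation of the unit is immediate: $\rho(\emptyset) = (\rho^{}_0 : \emptyset \to \mathrm{Rev}(\emptyset))$ is the identity on the empty profile, which coincides with the unit $\ast$-object $(\mathrm{Rev}_0 = \id_\emptyset)$ from \eqref{eqn:astObjunit}.

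The crux is preservation of tensor products, and this is where I expect the only real (though elementary) work to lie. Writing $n = \vert\und{c}\vert$ and $m = \vert\und{d}\vert$, Proposition \ref{propo:astObjmonoidal} computes $\rho(\und{c}) \otimes \rho(\und{d})$ as the composite of $\rho^{}_n \otimes \rho^{}_m$ with the structure map $\mathrm{Rev}_{2,\und{c},\und{d}} = \tau\langle n,m\rangle$. Tracking the underlying morphism, $\rho^{}_n \otimes \rho^{}_m$ reverses each of the two blocks separately and the subsequent block transposition $\tau\langle n,m\rangle$ interchanges them; the net effect on $\und{c}\otimes\und{d}$ is exactly the total order-reversal $\rho^{}_{n+m}$. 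Thus the composite morphism $\und{c}\otimes\und{d}\to\mathrm{Rev}(\und{c}\otimes\und{d})$ equals $\rho^{}_{n+m}$, which gives $\rho(\und{c}) \otimes \rho(\und{d}) = \rho(\und{c}\otimes\und{d})$ as $\ast$-objects. Compatibility with the braiding then follows at once: the block transposition $\tau\langle n,m\rangle$ is a $\ast$-morphism by the very construction of the symmetric monoidal structure on $\mathrm{Rev}$ in Example \ref{ex:Cprofiles3}, and since $\rho$ is the identity on underlying morphisms it automatically preserves braidings.

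With $\rho$ thereby established as a symmetric monoidal section of $U$, Theorem \ref{theo:iso2trivialmonoidal} yields the desired $\ISMonCat$-isomorphism between $(\Sigma_\CCC,\mathrm{Rev},\id_{\ID_{\Sigma_\CCC}})$ and the trivial involutive symmetric monoidal category $(\Sigma_\CCC,\ID_{\Sigma_\CCC},\id_{\ID_{\Sigma_\CCC}})$, completing the proof.
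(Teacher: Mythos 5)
Your proposal is correct and follows essentially the same route as the paper: take the functor $\rho$ from Example \ref{ex:Cprofiles2}, equip it with the trivial (strict) symmetric monoidal structure after checking $\otimes\,(\rho\times\rho)=\rho\,\otimes$ and $\rho(\emptyset)=(\mathrm{Rev}_0:\emptyset\to\mathrm{Rev}(\emptyset))$, and invoke Theorem \ref{theo:iso2trivialmonoidal}. The only difference is that you spell out the block-permutation computation $\tau\langle n,m\rangle\,(\rho_n\oplus\rho_m)=\rho_{n+m}$ that the paper leaves as "one easily checks", and that computation is correct.
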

\begin{proof}
By Theorem \ref{theo:iso2trivialmonoidal}, it is sufficient to construct 
a symmetric monoidal section $\rho = (\rho,\rho_2,\rho_0) : \Sigma_\CCC \to 
\astObj(\Sigma_\CCC,\mathrm{Rev},\id_{\ID_{\Sigma_\CCC}})$ of the forgetful 
symmetric monoidal functor $U$. Taking the underlying functor
as in Example \ref{ex:Cprofiles2}, i.e.\ 
$\rho : \und{c}\mapsto (\rho_{\vert\und{c}\vert} : \und{c} \to \und{c} \rho_{\vert\und{c}\vert})$
with the order-reversal permutations $\rho_{\vert\und{c}\vert}\in\Sigma_{\vert\und{c}\vert}$,
one easily checks that $\otimes (\rho\times \rho) = \rho\otimes$ and 
$\rho(\emptyset) = (\id_{\emptyset} : \emptyset \to \emptyset) =(\mathrm{Rev}_0 : \emptyset \to \mathrm{Rev}(\emptyset)) $.
We choose the trivial symmetric monoidal structure $\rho_2 = \id_{\rho\otimes}$ 
and $\rho_0 =\id_\emptyset$.
\end{proof}

\subsection{$\ast$-monoids}
Let us recall the $2$-functors 
$\Mon : \mathbf{(S)MCat} \to \Cat$ given in \eqref{eqn:Mon2fun},
$\astObj : \ICat\to\Cat$ given in \eqref{eqn:astObj2fun}
and its lift  $\astObj : \mathbf{I(S)MCat} \to \mathbf{S(M)Cat}$ 
given in \eqref{eqn:astObjmonoidal2fun}. The aim of this subsection
is to describe a $2$-functor $\Mon : \mathbf{I(S)MCat} \to \ICat$
that lifts $\Mon : \mathbf{(S)MCat} \to \Cat$ to the involutive setting,
such that the diagram
\begin{flalign}\label{eqn:astObjMoncommute}
\xymatrix{
\mathbf{I(S)MCat} \ar[rr]^-{\astObj} \ar@{-->}[d]_-{\Mon} & & \mathbf{(S)MCat} \ar[d]^-{\Mon} \\
\ICat \ar[rr]_-{\astObj} & & \Cat
}
\end{flalign}
of $2$-categories and $2$-functors commutes (on the nose). We then define $\ast$-monoids
in terms of the diagonal $2$-functor $\astMon : \mathbf{I(S)MCat} \to \Cat $ in this square.
\sk

Let us start with describing the $2$-functor
\begin{flalign}\label{eqn:IMon2fun}
\Mon \,:\, \mathbf{I(S)MCat} ~\longrightarrow~ \ICat
\end{flalign}
that lifts \eqref{eqn:Mon2fun} to the involutive setting in some detail:
\begin{itemize}
\item an involutive (symmetric) monoidal category $(\CC,J,j)$ is mapped to 
the involutive category
\begin{flalign}\label{eqn:IMonCCJj}
\Mon(\CC,J,j) \,:=\,\big(\Mon(\CC),\Mon(J),\Mon(j)\big)\in\ICat
\end{flalign}
given by evaluating the $2$-functor \eqref{eqn:Mon2fun}
on the (symmetric) monoidal category $\CC$, on the (symmetric) monoidal
endofunctor $J :\CC\to\CC$ and on the (symmetric) 
monoidal natural isomorphism $j:\ID_\CC\to J^2$;
\item an involutive (symmetric) monoidal functor
$(F,\nu) : (\CC,J,j)\to (\CC^\prime,J^\prime,j^\prime)$
is mapped to the involutive functor
\begin{flalign}\label{eqn:IMonFnu}
\Mon(F,\nu):= \big(\Mon(F),\Mon(\nu)\big)\,:\, \Mon(\CC,J,j)~\longrightarrow~
\Mon(\CC^\prime,J^\prime,j^\prime)
\end{flalign}
given by evaluating the $2$-functor \eqref{eqn:Mon2fun} on the (symmetric)
monoidal functor $F : \CC\to\CC^\prime$ and on the (symmetric)
monoidal natural transformation $\nu : FJ \to J^\prime F$;
\item an involutive (symmetric) monoidal natural transformation
$\zeta : (F,\nu) \to (G,\chi)$ is mapped to the involutive natural
transformation
\begin{flalign}
\Mon(\zeta) \,:\, \Mon(F,\nu)~\longrightarrow~ \Mon(G,\chi)
\end{flalign}
given by evaluating the $2$-functor \eqref{eqn:Mon2fun} on $\zeta$.
\end{itemize}
\begin{lem}\label{lem:astMoncommutes}
The diagram \eqref{eqn:astObjMoncommute} of $2$-categories and $2$-functors
commutes (on the nose).
\end{lem}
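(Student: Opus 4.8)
The plan is to verify the commutativity of \eqref{eqn:astObjMoncommute} level by level — on objects, $1$-cells and $2$-cells — by unwinding the explicit descriptions of the four $2$-functors recorded above. Since the claim is strict commutativity (on the nose) rather than commutativity up to coherent isomorphism, at each level I would exhibit that the two composites produce \emph{literally} the same data subject to the same conditions, and then conclude by comparing formulas. The conceptual content is the familiar interchange ``monoids in $\ast$-objects $=$ $\ast$-objects in monoids'', which must be checked to hold strictly.

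First, on objects. Starting from an involutive (symmetric) monoidal category $(\CC,J,j)$, going right-then-down yields $\Mon(\astObj(\CC,J,j))$, where $\astObj(\CC,J,j)$ carries the (symmetric) monoidal structure of Proposition \ref{propo:astObjmonoidal}. A monoid therein is a $\ast$-object $(\ast:M\to JM)$ together with multiplication $\mu$ and unit $\eta$ that are $\ast$-morphisms; spelling out the $\ast$-morphism conditions via the explicit tensor product of $\ast$-objects (built from $J_2$) and the unit $\ast$-object $(J_0:I\to JI)$, these read as $\ast\,\mu = J\mu\,{J_2}_{M,M}\,(\ast\otimes\ast)$ and $\ast\,\eta = J\eta\,J_0$. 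Going down-then-right yields $\astObj(\Mon(\CC),\Mon(J),\Mon(j))$, whose objects are monoids $(M,\mu,\eta)$ equipped with a monoid morphism $\ast:(M,\mu,\eta)\to\Mon(J)(M,\mu,\eta)$ satisfying the $\ast$-object axiom for $\Mon(j)$. Recalling from \eqref{eqn:Mon2fun} that $\Mon(J)(M,\mu,\eta)$ has multiplication $J\mu\,{J_2}_{M,M}$ and unit $J\eta\,J_0$, and that $\Mon(J)$ and $\Mon(j)$ act on morphisms and components simply as $J$ and $j$, the monoid-morphism condition on $\ast$ is exactly the pair of squares just obtained, while the $\ast$-object axiom reduces to $(J\ast)\,\ast = j_M$. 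Hence both composites assign to $(\CC,J,j)$ the category whose objects are quadruples $(M,\mu,\eta,\ast)$ consisting of a monoid together with an involution $\ast:M\to JM$ compatible with $\mu$, $\eta$ and $j$, and the two categories coincide on the nose.

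Next, on $1$-cells and $2$-cells I would argue analogously. For an involutive (symmetric) monoidal functor $(F,\nu)$, combining the object-level formulas of \eqref{eqn:astObj2fun}, \eqref{eqn:Mon2fun}, \eqref{eqn:astObjmonoidal2fun} and \eqref{eqn:IMonFnu} shows that both $\Mon(\astObj(F,\nu))$ and $\astObj(\Mon(F,\nu))$ send a quadruple $(M,\mu,\eta,\ast)$ to the object $FM$ with multiplication $F\mu\,{F_2}_{M,M}$, unit $F\eta\,F_0$ and involution $\nu_M\,F\ast$, and act on morphisms simply as $F$. Likewise, both composites send an involutive (symmetric) monoidal natural transformation $\zeta$ to the transformation with components $\zeta_M$. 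As in each case the two composites are given by identical formulas in the data $(F,F_2,F_0,\nu)$ and $\zeta$, they agree strictly.

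The only genuinely delicate point is the bookkeeping in the object-level matching: one must confirm that the tensor product and unit of the (symmetric) monoidal category $\astObj(\CC,J,j)$ from Proposition \ref{propo:astObjmonoidal} — assembled from $J_2$ and $J_0$ — reproduce exactly the multiplication $J\mu\,{J_2}_{M,M}$ and unit $J\eta\,J_0$ of the monoid $\Mon(J)(M,\mu,\eta)$, so that the two families of conditions literally coincide rather than merely correspond. Once this identification is in place, the comparison of $1$-cells and $2$-cells is a routine check against the explicit formulas above, and strict commutativity of \eqref{eqn:astObjMoncommute} follows.
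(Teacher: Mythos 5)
Your proposal is correct and follows exactly the route the paper takes: the paper's proof is simply the assertion that commutativity is "an elementary check using the explicit definitions" of the four $2$-functors \eqref{eqn:Mon2fun}, \eqref{eqn:astObj2fun}, \eqref{eqn:astObjmonoidal2fun} and \eqref{eqn:IMon2fun}, and you have carried out precisely that check at the level of objects, $1$-cells and $2$-cells, with the correct matching formulas (in particular the identification of the $\ast$-morphism conditions for $\mu$ and $\eta$ with the monoid-morphism condition on $\ast$ against $\Mon(J)(M,\mu,\eta)$).
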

\begin{proof}
This is an elementary check using the explicit definitions of the $2$-functors
given in \eqref{eqn:Mon2fun}, \eqref{eqn:astObj2fun}, \eqref{eqn:astObjmonoidal2fun}
and \eqref{eqn:IMon2fun}. 
\end{proof}

\begin{defi}\label{def:astmonoids}
The $2$-functor $\astMon : \mathbf{I(S)MCat} \to \Cat$ is defined as the diagonal
$2$-functor in the commutative square \eqref{eqn:astObjMoncommute}, i.e.\
\begin{flalign}\label{eqn:astmonoids}
\xymatrix{
\ar[rrd]^-{~~\astMon}\mathbf{I(S)MCat} \ar[rr]^-{\astObj} \ar[d]_-{\Mon} & & \mathbf{(S)MCat} \ar[d]^-{\Mon} \\
\ICat \ar[rr]_-{\astObj} & & \Cat
}
\end{flalign}
For an involutive (symmetric) monoidal category $(\CC,J,j)$, we call
$\astMon(\CC,J,j)$ the category of {\em $\ast$-monoids} in $(\CC,J,j)$.
\end{defi}

\begin{rem}\label{rem:astmonoids}
Let $(\CC,J,j)$ be an involutive (symmetric) monoidal category.
We provide an explicit description of the objects and morphisms
in the associated category of $\ast$-monoids $\astMon(\CC,J,j)$,
which we shall call {\em $\ast$-monoids} and {\em $\ast$-monoid morphisms}.
Unpacking Definition \ref{def:astmonoids}, one obtains that a
$\ast$-monoid is a quadruple $(M,\mu,\eta,\ast)\in \astMon(\CC,J,j)$ 
consisting of an object $M\in\CC$ and three $\CC$-morphisms
$\mu : M\otimes M\to M$, $\eta : I\to M$ and $\ast : M\to JM$, which satisfy the 
following conditions:
\begin{itemize}
\item[(1)] $(M,\mu,\eta)$ is a monoid in the (symmetric) monoidal category $\CC$;
\item[(2)] $\ast : M\to JM$ is a $\ast$-object in the involutive category $(\CC,J,j)$;
\item[(3)] these two structures are compatible in the sense that the diagrams
\begin{flalign}\label{eqn:astmonoidcompatibilities}
\xymatrix@C=3.5em{
\ar[d]_-{J_0}I\ar[r]^-{\eta} & M\ar[d]^-{\ast} & \ar[d]_-{\mu}M\otimes M\ar[r]^-{\ast\otimes\ast} & JM\otimes JM \ar[r]^-{{J_2}_{M,M}} & J(M\otimes M)\ar[d]^-{J\mu}\\ 
J I \ar[r]_-{J\eta} & JM & M \ar[rr]_-{\ast}& &JM
}
\end{flalign}
in $\CC$ commute.
\end{itemize}
As a consequence of Lemma \ref{lem:astMoncommutes}, these conditions
have two equivalent interpretations which correspond to the counterclockwise and
clockwise paths in the commutative diagram \eqref{eqn:astmonoids}:
The first option is to regard $\ast : (M,\mu,\eta)\to \Mon(J)(M,\mu,\eta)$
as a $\ast$-object in the involutive category $\Mon(\CC,J,j)\in\ICat$. The second
option is to regard $\eta:(J_0: I\to JI) \to (\ast : M\to JM)$ and 
$\mu: (\ast:M\to JM) \otimes (\ast:M\to JM) \to (\ast : M\to JM)$ 
as the structure maps of a monoid in the (symmetric) monoidal category
$\astObj(\CC,J,j) \in \mathbf{(S)MCat}$.
\sk

A $\ast$-monoid morphism $f : (M,\mu,\eta,\ast) \to (M^\prime,\mu^\prime,\eta^\prime,\ast^\prime)$
is a $\CC$-morphism $f:M\to M^\prime$ that preserves both the monoid structures and $\ast$-involutions.
\end{rem}

\begin{ex}\label{ex:Vecexplicitmonoids}
Let us consider a $\ast$-monoid $(A,\mu,\eta,\ast)$
in the involutive symmetric monoidal category 
$(\Vec_\bbC,\ovr{(-)},\id_{\ID_{\Vec_\bbC}})$ from Example \ref{ex:vec3}.
In particular, the triple $(A,\mu,\eta)$ is an associative and unital algebra 
over $\bbC$ with multiplication $a\,b = \mu(a\otimes b)$ and unit $\oone=\eta(1)$.
By Example \ref{ex:vec2}, $\ast$ is a complex anti-linear automorphism
of $A$ that squares to the identity, i.e.\ $a^{\ast\ast} =a$. The compatibility conditions
in \eqref{eqn:astmonoidcompatibilities} state that
$\oone^\ast =\oone$ and $(a\,b)^\ast = a^\ast\,b^\ast$.
We would like to emphasize that the latter condition {\em is not}
the usual axiom for associative and unital 
$\ast$-algebras over $\bbC$, which is given by order-reversal 
$(a\,b)^\ast = b^\ast\,a^\ast$. As a consequence, our concept
of $\ast$-monoids given in Definition \ref{def:astmonoids}
{\em does not} include the usual associative and unital 
$\ast$-algebras over $\bbC$ as examples. 
We will show later in Example \ref{ex:reversal}
that the usual associative and unital $\ast$-algebras over $\bbC$
are recovered as $\ast$-algebras over a suitable $\ast$-operad,
which provides a sufficiently flexible framework to implement 
order-reversal $(a\,b)^\ast = b^\ast\,a^\ast$.
\end{ex}


\section{\label{sec:symseq}Involutive structures on colored symmetric sequences}
Colored operads can be defined as monoids in the monoidal category
of colored symmetric sequences, see e.g.\ \cite{Yau,WhiteYau,YauQFT,GambinoJoyal} 
and below for a brief review. Let $\CCC\in\Set$ be any 
non-empty set and $\MM$ any cocomplete closed symmetric monoidal category.
(We denote the monoidal structure on $\MM$ by $\otimes$ and $I$, 
and the internal hom by $[-,-] : \MM^\op\times\MM\to \MM$.) 
The category of {\em $\CCC$-colored symmetric sequences} with values in $\MM$
is defined as the functor category
\begin{flalign}\label{eqn:symseq}
\SymSeq_\CCC(\MM) \,:=\, \MM^{\Sigma_\CCC\times \CCC}\quad,
\end{flalign}
where $\Sigma_\CCC$ is the groupoid of $\CCC$-profiles defined in Example \ref{ex:Cprofiles}
and the set $\CCC$ is regarded as a discrete category. Given $X\in\SymSeq_\CCC(\MM)$,
we write
\begin{subequations}
\begin{flalign}
X\big(\substack{t\\\und{c}}\big)\in\MM
\end{flalign}
for the evaluation of this functor on objects $(\und{c},t)\in\Sigma_\CCC\times\CCC$ and
\begin{flalign}
X(\sigma)\,:\, X\big(\substack{t\\\und{c}}\big)~\longrightarrow~X\big(\substack{t\\\und{c}\sigma}\big)
\end{flalign}
\end{subequations}
for its evaluation on morphisms $\sigma : (\und{c},t) \to (\und{c}\sigma,t)$ in $\Sigma_\CCC\times\CCC$.
\sk

The category $\SymSeq_\CCC(\MM)$ can be equipped with the following monoidal structure:
The tensor product is given by the {\em circle product} $\circ :
\SymSeq_\CCC(\MM) \times \SymSeq_\CCC(\MM)\to \SymSeq_\CCC(\MM)$.
Concretely, the circle product of $X,Y\in \SymSeq_\CCC(\MM)$ is defined by the coend
\begin{flalign}\label{eqn:circleproduct}
(X\circ Y)\big(\substack{t\\\und{c}}\big)\,:=\, \int^{\und{a}}\int^{(\und{b}_1,\dots,\und{b}_m)}
 \Sigma_\CCC\big(\und{b}_1\otimes\cdots\otimes \und{b}_m,\und{c}\big)\otimes X\big(\substack{t\\\und{a}}\big)
\otimes Y\big(\substack{a_1\\ \und{b}_1}\big)\otimes\cdots\otimes Y\big(\substack{a_m\\ \und{b}_m}\big)\quad,
\end{flalign}
for all $(\und{c},t)\in\Sigma_\CCC\times\CCC$. Two remarks are in order:
(1)~This expression makes use of the symmetric monoidal structure on $\Sigma_\CCC$
that we described in Example \ref{ex:Cprofiles3}. (2)~The tensor product between the $\Hom$-set
$ \Sigma_\CCC\big(\und{b}_1\otimes\cdots\otimes \und{b}_m,\und{c}\big)\in\Set$ 
and the object $X\big(\substack{t\\\und{a}}\big)\in\MM$ is given by the canonical $\Set$-tensoring
of $\MM$, i.e.\ $S\otimes m := \coprod_{s\in S} m$ for any $S\in\Set$ and $m\in \MM$.
The {\em circle unit} is the object $I_\circ \in \SymSeq_\CCC(\MM)$ defined by
\begin{flalign}\label{eqn:circleunit}
I_\circ\big(\substack{t\\\und{c}}\big) \,:=\,\Sigma_\CCC(t,\und{c})\otimes I\quad,
\end{flalign}
for all $(\und{c},t)\in\Sigma_\CCC\times\CCC$. 
\begin{propo}\label{propo:symseqmonoidal}
$(\SymSeq_\CCC(\MM),\circ,I_\circ)$ is a right closed monoidal category. 
\end{propo}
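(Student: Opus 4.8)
The plan is to verify two separate claims: first, that $(\SymSeq_\CCC(\MM),\circ,I_\circ)$ is a monoidal category, and second, that for each fixed $Y\in\SymSeq_\CCC(\MM)$ the right tensoring functor $(-)\circ Y$ admits a right adjoint, which is what ``right closed'' asks for. The organizing idea behind both claims is to view a symmetric sequence $X$ through its associated analytic endofunctor $\Phi_X:\MM^\CCC\to\MM^\CCC$, defined on $V\in\MM^\CCC$ by $\Phi_X(V)_t:=\int^{\und{a}} X\big(\substack{t\\\und{a}}\big)\otimes V_{a_1}\otimes\cdots\otimes V_{a_n}$ for $\und{a}=(a_1,\dots,a_n)$. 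A routine coend computation (co-Yoneda plus the Fubini theorem) gives natural isomorphisms $\Phi_{X\circ Y}\cong\Phi_X\,\Phi_Y$ and $\Phi_{I_\circ}\cong\ID_{\MM^\CCC}$, so that the circle product is modelled by composition of endofunctors; since the latter is strictly associative and unital, this is what ultimately governs the coherence data below.

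For the monoidal structure I would proceed directly with coend calculus. The unitors $I_\circ\circ Y\cong Y$ and $X\circ I_\circ\cong X$ follow from the co-Yoneda lemma, using that the circle unit \eqref{eqn:circleunit} is concentrated on length-one profiles. The associator $(X\circ Y)\circ Z\cong X\circ(Y\circ Z)$ is obtained by expanding both sides into iterated coends, interchanging these coends by Fubini, and recombining the $\Hom$-objects $\Sigma_\CCC(\und{b}_1\otimes\cdots\otimes\und{b}_m,\und{c})$ appearing in \eqref{eqn:circleproduct} by means of the symmetric monoidal (concatenation) structure on $\Sigma_\CCC$ from Example \ref{ex:Cprofiles3}; the associativity of concatenation in $\Sigma_\CCC$ and of $\otimes$ in $\MM$, together with the cocontinuity of $\otimes$, are precisely what make the two bracketings agree. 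The pentagon and triangle identities then amount to the same coend bookkeeping carried one step further; the cleanest way to see them is to note that $X\mapsto\Phi_X$ is a faithful strong monoidal functor into the strict monoidal category of endofunctors, so that these coherence diagrams are reflections of trivially commuting ones.

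For right closedness the decisive observation is that in the defining coend \eqref{eqn:circleproduct} the argument $X$ occurs linearly, exactly once. Hence $(-)\circ Y$ is assembled from coends and from tensoring with fixed objects of $\MM$; since $\MM$ is closed its tensor product is cocontinuous in each variable, and coends are colimits, so $(-)\circ Y$ preserves all colimits. I would then exhibit the right adjoint explicitly. Writing the circle product as $(X\circ Y)\big(\substack{t\\\und{c}}\big)=\int^{\und{a}} X\big(\substack{t\\\und{a}}\big)\otimes P_Y\big(\substack{\und{a}\\\und{c}}\big)$, with $P_Y$ the inner coend of \eqref{eqn:circleproduct}, the successive use of ``hom out of a coend is an end of homs'', the tensor--hom adjunction of the closed category $\MM$, and Fubini for ends produces
\begin{gather*}
\SymSeq_\CCC(\MM)\big(X\circ Y,\,Z\big)\,\cong\,\SymSeq_\CCC(\MM)\big(X,\,\{Y,Z\}\big)\quad,\\
\{Y,Z\}\big(\substack{t\\\und{a}}\big)\,:=\,\int_{\und{c}}\,\big[\,P_Y\big(\substack{\und{a}\\\und{c}}\big)\,,\,Z\big(\substack{t\\\und{c}}\big)\,\big]\quad,
\end{gather*}
written in terms of the internal hom $[-,-]$ of $\MM$. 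This displays $(-)\circ Y\dashv\{Y,-\}$ and settles right closedness.

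The step I expect to be the main obstacle is the associativity coherence: the nested coends together with the block-permutation bookkeeping in $\Sigma_\CCC$ make a bare-hands verification of the pentagon quite cumbersome, which is exactly why I would organize the argument through the identification with composition of analytic endofunctors and import coherence from there. A secondary point that deserves care is the existence of the end defining $\{Y,Z\}$: it presupposes that $\MM$ possesses the relevant limits (internal homs and ends over $\Sigma_\CCC$), which holds in all cases of interest; alternatively, once the cocontinuity of $(-)\circ Y$ is established, one may simply invoke an adjoint functor theorem to obtain the right adjoint abstractly.
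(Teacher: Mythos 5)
The paper does not actually prove this proposition: it is recalled from the colored-operad literature (the references \cite{Yau,WhiteYau,YauQFT,GambinoJoyal} cited at the start of Section \ref{sec:symseq}), so there is no in-text argument to compare yours against. What you have written is essentially the standard proof from those sources: unitors and associator from co-Yoneda and Fubini for coends, using the concatenation monoidal structure on $\Sigma_\CCC$ to recombine the permutation $\Hom$-sets; and right closedness from the observation that $X$ occurs exactly once in \eqref{eqn:circleproduct}, so that $(-)\circ Y$ is assembled from colimits and from tensoring with fixed objects, with right adjoint $\{Y,-\}$ given by your end-of-internal-homs formula. Both halves are sound as outlined.

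Two caveats. First, your preferred shortcut for the pentagon and triangle --- importing them from the strict monoidal category of endofunctors via $X\mapsto\Phi_X$ --- rests on the assertion that this functor is \emph{faithful}, which you do not justify and which is not automatic for an arbitrary cocomplete closed symmetric monoidal $\MM$ (it is a theorem of Joyal for $\MM=\Set$, and for $\Vec_\bbC$ one argues that $V^{\otimes n}$ contains enough free $\Sigma_n$-orbits to detect morphisms, but no such argument is available in general); moreover, checking that your associator corresponds under $\Phi$ to the strict one is itself a coend computation of roughly the same size as the pentagon, so the shortcut saves less than it appears to. Either fall back on the direct coend verification you also sketch, or restrict the $\Phi$-argument to target categories where faithfulness is established. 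Second, the end $\int_{\und{c}}\big[\,P_Y\big(\substack{\und{a}\\\und{c}}\big),Z\big(\substack{t\\\und{c}}\big)\big]$ is a limit over the infinite groupoid $\Sigma_\CCC$, and nothing in the paper's standing hypothesis (cocomplete, closed) guarantees that $\MM$ has it. You flag this honestly, and it is a gap inherited from the statement itself --- the cited references assume completeness, or one works with locally presentable $\MM$ and invokes the adjoint functor theorem as you suggest --- rather than a defect specific to your argument.
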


The aim of this section is to transfer these structures and 
results to the setting of involutive categories.

\subsection{Product-exponential $2$-adjunction}
Because the category of symmetric sequences \eqref{eqn:symseq}
is defined as a functor category, we shall start with developing 
a notion of functor categories in the involutive setting. 
For this we will first recall the relevant structures for 
ordinary category theory from a perspective that easily generalizes
to involutive category theory.
\sk

Let us denote by $\Cat \,\widetilde{\times}\, \Cat$ the $2$-category with objects
given by pairs $(\CC,\DD)$ of categories, morphisms given by pairs
$(F,G)$ of functors and $2$-morphisms given by pairs $(\zeta,\xi)$ 
of natural transformations, and all compositions given component-wise.
(We use the symbol $\widetilde{\times}$ to denote the above product $2$-category
because we reserve the symbol $\times$ for the $2$-functors defined below.)
Notice that taking products of categories, functors and
natural transformations defines a $2$-functor
\begin{flalign}\label{eqn:times2fun}
\times \,:\, \Cat \,\widetilde{\times}\, \Cat~\longrightarrow~\Cat\quad.
\end{flalign}
Let us denote by $\Cat^\op$ the opposite $2$-category, i.e.\ morphisms
$\CC\to \DD$ are functors $F :\DD\to \CC$ going in the opposite direction
and $2$-morphisms are not reversed.
We define the exponential $2$-functor
\begin{flalign}\label{eqn:exp2fun}
(-)^{(-)}\,:\, \Cat^\op\,\widetilde{\times}\, \Cat~\longrightarrow~\Cat
\end{flalign}
as follows: 
\begin{itemize}
\item a pair $(\DD,\CC)$ of categories is mapped to the functor category $\CC^\DD$;
\item a pair $(G: \DD^\prime\to \DD,F:\CC\to\CC^\prime)$ of functors is 
mapped to the functor $F^G : \CC^\DD \to {\CC^\prime}^{\DD^\prime}$ that acts 
on objects and morphisms as
\begin{subequations}
\begin{flalign}
F^G\big(X : \DD\to\CC\big) \,&:=\, (FXG : \DD^\prime\to \CC^\prime)\quad,\\
F^G\big(\alpha: X\to Y\big) \,&:=\, ( F\alpha G : FXG \to FYG)\quad;
\end{flalign}
\end{subequations}
\item a pair $(\xi : G\to G^\prime , \zeta : F\to F^\prime)$ of natural transformations
is mapped to the natural transformation $\zeta^\xi : F^G\to {F^{\prime}}^{G^\prime}$ 
with components given by any of the two compositions in the commutative square
\begin{flalign}
\xymatrix{
\ar@{-->}[drr]^-{~(\zeta^\xi)_{X}^{}}\ar[d]_-{FX\xi} FXG\ar[rr]^-{\zeta XG} && F^\prime XG\ar[d]^-{F^\prime X\xi}\\
FXG^\prime \ar[rr]_-{\zeta XG^\prime} && F^\prime X G^\prime
}
\end{flalign}
for all $X\in \CC^\DD$. 
\end{itemize}
The two $2$-functors $\times$ and $(-)^{(-)}$ are related by a 
family of $2$-adjunctions.
\begin{propo}\label{propo:prodexpadjunction}
For every $\DD\in\Cat$, there is a $2$-adjunction
\begin{flalign}
\xymatrix{
(-)\times \DD \,:\, \Cat~\ar@<0.5ex>[r]&\ar@<0.5ex>[l]  ~\Cat \,:\, (-)^\DD
}\quad.
\end{flalign}
\end{propo}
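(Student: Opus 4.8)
The plan is to establish the $2$-adjunction $(-)\times\DD \dashv (-)^\DD$ by exhibiting the unit and counit $2$-natural transformations and verifying the triangle identities, upgrading the familiar ordinary product-hom adjunction to the $2$-categorical level. Since the underlying $1$-adjunction $(-)\times\DD \dashv (-)^\DD$ on $\Cat$ (viewed as a mere $1$-category) is completely standard, the real content is to check that the unit and counit are $2$-\emph{natural} transformations between the $2$-functors $(-)\times\DD$ and $(-)^\DD$, and that the triangle identities hold at the level of $2$-cells as well as $1$-cells.

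First I would write down the unit and counit explicitly. For every category $\CC$, the component of the unit $\eta_\CC : \CC \to (\CC\times\DD)^\DD$ is the functor sending an object $c\in\CC$ to the functor $\DD\to\CC\times\DD$, $d\mapsto (c,d)$, and a morphism $f : c\to c'$ to the evident natural transformation. For every category $\EE$, the component of the counit $\epsilon_\EE : \EE^\DD\times\DD \to \EE$ is the evaluation functor sending $(X,d)$ to $X(d)$ and acting on morphisms $(\alpha, g) : (X,d)\to (Y,d')$ by the composite $\alpha_{d'}\circ X(g) = Y(g)\circ \alpha_d$. These are exactly the components of the classical adjunction, so the $1$-categorical triangle identities are already in hand.

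The main step, and the one I expect to be the principal obstacle, is the $2$-naturality of $\eta$ and $\epsilon$. Ordinary naturality requires that for each functor $F : \CC\to\CC'$ the square relating $\eta_\CC$, $\eta_{\CC'}$, $F$ and $(F\times\DD)^\DD$ commutes; $2$-naturality demands in addition that for each natural transformation $\zeta : F\to F'$ the two whiskered $2$-cells agree, i.e.\ $\eta_{\CC'}\,\zeta = (\zeta\times\DD)^\DD\,\eta_\CC$ as $2$-morphisms. I would verify this by unwinding the explicit formulas for $\times$ in \eqref{eqn:times2fun} and $(-)^{(-)}$ in \eqref{eqn:exp2fun}: both sides evaluate, at an object $c\in\CC$ and $d\in\DD$, to the morphism $(\zeta_c,\id_d)$ in $\CC'\times\DD$, and a short diagram check confirms equality. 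The analogous computation for $\epsilon$ reduces, after whiskering, to the commuting square defining $(\zeta^\xi)_X$ together with the interchange law for horizontal and vertical composition of natural transformations; the key identity here is precisely the relation $(J\beta K)(J\alpha K) = J(\beta\alpha)K$ recorded in the Notations.

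Finally I would confirm the triangle identities as equalities of $2$-natural transformations. Since both composites in each triangle are identity $1$-cells and the $2$-functors are defined component-wise, it suffices to check the identities on underlying $1$-cells, which is the classical computation, and to observe that no further condition on $2$-cells arises because identity $2$-natural transformations are determined by their $1$-cell data. Assembling these verifications yields the desired $2$-adjunction.
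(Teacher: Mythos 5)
Your proposal is correct and follows essentially the same route as the paper: the same unit (the inclusion functors $d\mapsto(c,d)$) and counit (the evaluation functors), with the triangle identities and the $2$-naturality checks left as direct verifications. The paper's proof is in fact terser, simply stating the unit and counit and asserting that the triangle identities are a straightforward check, whereas you spell out the $2$-naturality conditions in more detail.
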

\begin{proof}
The component at $\CC\in\Cat$ of the unit $2$-natural transformation
$\eta : \ID_{\Cat} \to ((-)\times \DD)^\DD$  is given by the functor
\begin{flalign}
\eta_\CC \,:\,\CC~\longrightarrow~(\CC\times\DD)^\DD
\end{flalign}
that assigns to $c\in\CC$ the inclusion functor $\eta_\CC(c) : \DD\to \CC\times \DD$ 
specified by $d \mapsto (c,d)$. The component at $\CC\in\Cat$ of the 
counit $2$-natural transformations
$\epsilon : (-)^\DD\times \DD \to \ID_{\Cat}$ is given by the evaluation functor
\begin{flalign}
\epsilon_\CC \,:\, \CC^\DD\times \DD~\longrightarrow~\CC\quad,
\end{flalign}
that assigns to $(X,d) \in \CC^\DD \times \DD$ the object $Xd \in \CC$. The triangle identities are a straightforward check.
\end{proof}

Because of their $2$-functoriality, our constructions above can be immediately extended 
to involutive category theory. Concretely, using the $2$-functor \eqref{eqn:times2fun}, 
we define the product $2$-functor
\begin{flalign}\label{eqn:timesICat2fun}
\times \,:\, \ICat \,\widetilde{\times}\, \ICat~\longrightarrow~\ICat
\end{flalign}
in the involutive setting as follows: 
\begin{itemize}
\item a pair of involutive categories is mapped to the involutive category
\begin{flalign}
(\CC,J,j)\times (\DD,K,k) &\,:=\, (\CC\times \DD,J\times K, j\times k)\quad;
\end{flalign}
\item a pair of involutive functors is mapped to the involutive functor
\begin{flalign}
(F,\nu)\times(G,\chi) \,:= \, (F\times G,\nu\times \chi)\quad;
\end{flalign}
\item a pair of involutive natural transformations is mapped to the involutive natural transformation
$\zeta\times\xi$.
\end{itemize}
Similarly, using the $2$-functor \eqref{eqn:exp2fun}, 
we define the  exponential $2$-functor
\begin{flalign}\label{eqn:expICat2fun}
(-)^{(-)}\,:\, \ICat^\op\,\widetilde{\times}\, \ICat~\longrightarrow~\ICat
\end{flalign}
in the involutive setting as follows: 
\begin{itemize}
\item a pair of involutive categories is mapped to the involutive category
\begin{flalign}
(\CC,J,j)^{(\DD,K,k)}\,:=\, \big(\CC^\DD, J^K,j^k\big)\quad;
\end{flalign}
\item a pair of involutive functors is mapped to the involutive functor
\begin{flalign}
(F,\nu)^{(G,\chi)}\, :=\, \big( F^G,\nu^{\chi^{-1}}  \big) \quad;
\end{flalign}
\item a pair of involutive natural transformations is mapped to the involutive natural transformation 
$\zeta^\xi$.
\end{itemize}
Analogously to Proposition \ref{propo:prodexpadjunction}, one can prove
\begin{propo}
For every $(\DD,K,k)\in\ICat$, there is a $2$-adjunction
\begin{flalign}
\xymatrix{
(-)\times( \DD,K,k) \,:\, \ICat~\ar@<0.5ex>[r]&\ar@<0.5ex>[l]  ~\ICat \,:\, (-)^{(\DD,K,k)}
}\quad.
\end{flalign}
\end{propo}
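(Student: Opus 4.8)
The plan is to mirror the proof of Proposition \ref{propo:prodexpadjunction}, lifting its unit and counit to the involutive setting. The involutive product and exponential $2$-functors \eqref{eqn:timesICat2fun} and \eqref{eqn:expICat2fun} have underlying (non-involutive) $2$-functors $(-)\times\DD$ and $(-)^\DD$, so in order for the resulting $2$-adjunction to lift the ordinary one, I take the underlying functors of the unit and counit at $(\CC,J,j)\in\ICat$ to be the ordinary ones of Proposition \ref{propo:prodexpadjunction}: the inclusion $\eta_\CC : \CC\to (\CC\times\DD)^\DD$, $c\mapsto (d\mapsto (c,d))$, and the evaluation $\epsilon_\CC : \CC^\DD\times\DD\to\CC$, $(X,d)\mapsto Xd$. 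The only new content compared to Proposition \ref{propo:prodexpadjunction} lies in equipping these functors with involutive structures and checking that all resulting data live in $\ICat$.

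First I would write down the involutive structures. For the counit, the source involution on $\CC^\DD\times\DD$ is $J^K\times K$, so $\epsilon_\CC\,(J^K\times K)$ sends $(X,d)$ to $JXK^2 d$, while $J\,\epsilon_\CC$ sends it to $J(Xd)$. Since $k:\ID_\DD\to K^2$ is a natural isomorphism (Definition \ref{def:invcat}), I define the involutive structure $\nu^\epsilon : \epsilon_\CC\,(J^K\times K)\to J\,\epsilon_\CC$ by the components $\nu^\epsilon_{(X,d)} := JX(k_d^{-1}) : JXK^2 d\to J(Xd)$. For the unit, the target involution on $(\CC\times\DD)^\DD$ is $(J\times K)^K$, so $\eta_\CC\,J$ sends $c$ to the functor $d\mapsto(Jc,d)$ while $(J\times K)^K\eta_\CC$ sends it to $d\mapsto(Jc,K^2 d)$; I define $\nu^\eta : \eta_\CC\,J\to (J\times K)^K\eta_\CC$ by the components $(\nu^\eta_c)_d := (\id_{Jc},k_d) : (Jc,d)\to(Jc,K^2 d)$. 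Both structures are natural isomorphisms, in accordance with Lemma \ref{lem:nuiso}, and the appearance of $k^{-1}$ in $\nu^\epsilon$ reflects the $\chi^{-1}$ in the exponential involutive structure $(F,\nu)^{(G,\chi)}=(F^G,\nu^{\chi^{-1}})$.

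With these structures in hand, the remaining verifications are a sequence of diagram chases, each reducing to the coherence $jJ=Jj$ on $\CC$ and $\DD$ together with the naturality of $j$ and $k$. Concretely I would check: (i) that $(\eta_\CC,\nu^\eta)$ and $(\epsilon_\CC,\nu^\epsilon)$ satisfy the involutive-functor axiom \eqref{eqn:invfundiagram}, so that they are $1$-cells in $\ICat$; and (ii) that the families $\eta$ and $\epsilon$ are $2$-natural in $(\CC,J,j)$, i.e.\ for every involutive functor $(F,\nu)$ the relevant naturality squares of involutive functors commute and the components are involutive natural transformations in the sense of Definition \ref{def:invfunandnat} (and likewise with respect to $2$-cells). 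The underlying squares already commute by the $2$-naturality established in Proposition \ref{propo:prodexpadjunction}, so in (ii) only the compatibility of the involutive structures is genuinely new.

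Finally, the triangle identities for $(\eta,\epsilon)$ hold automatically: both sides are involutive natural transformations built by whiskering and composing the $1$- and $2$-cells above, and the forgetful $2$-functor $\ICat\to\Cat$ sends them to the two sides of the triangle identities for the ordinary adjunction of Proposition \ref{propo:prodexpadjunction}, which agree. Since an involutive natural transformation is uniquely determined by its underlying natural transformation (being involutive is a property, cf.\ Definition \ref{def:invfunandnat}), equality of the underlying transformations forces equality in $\ICat$. The main obstacle is therefore not the triangle identities but the bookkeeping in steps (i) and (ii): one must track carefully how $k$ and $k^{-1}$ enter through the exponential structure $j^k$ and the product structure $j\times k$, and confirm that the two occurrences cancel exactly as dictated by the coherence axioms. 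Once this routine but somewhat lengthy chase is completed, the $2$-adjunction follows in the same manner as Proposition \ref{propo:prodexpadjunction}.
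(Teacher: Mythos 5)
Your proposal is correct and matches the paper's intent exactly: the paper gives no separate proof, stating only that the result follows ``analogously to Proposition \ref{propo:prodexpadjunction}'', and your argument is precisely that analogy carried out — the same underlying unit (inclusion) and counit (evaluation), equipped with the natural involutive structures built from $k$ and $k^{-1}$, with the triangle identities inherited from $\Cat$ because being an involutive natural transformation is a property of the underlying transformation. The explicit components you give for $\nu^\eta$ and $\nu^\epsilon$ are the right ones.
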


\subsection{Involutive colored symmetric sequences}
Let $(\MM,J,j)$ be an involutive closed symmetric monoidal category,
which we assume to be cocomplete, and $\CCC\in\Set$ a non-empty set of colors.
In order to define an involutive analog of the category of 
symmetric sequences \eqref{eqn:symseq}, one has to endow $\Sigma_\CCC \times \CCC$
with the structure of an involutive category.
The simplest possible choice is the trivial involutive structure from Example \ref{ex:trivial}, i.e.\
$\mathrm{triv}(\Sigma_\CCC \times \CCC) = 
(\Sigma_\CCC\times \CCC , \ID_{\Sigma_\CCC\times\CCC},\id_{ \ID_{\Sigma_\CCC\times\CCC}})\in\ICat$.
In particular, there is no non-trivial interplay between the involution functor
and the ordering of $\CCC$-profiles $\und{c} = (c_1,\dots,c_n)$. An alternative
choice that does describe a non-trivial interplay between involution and ordering 
of $\CCC$-profiles is obtained by considering the involutive symmetric monoidal category
$(\Sigma_\CCC,\mathrm{Rev}, \id_{\ID_{\Sigma_\CCC}})$ from 
Examples \ref{ex:Cprofiles}, \ref{ex:Cprofiles2} and \ref{ex:Cprofiles3},
where the involution functor is given by order-reversal. Endowing the discrete
category $\CCC$ with the trivial involutive structure and using the 
product $2$-functor \eqref{eqn:timesICat2fun},
we may form the involutive category $(\Sigma_\CCC,\mathrm{Rev}, 
\id_{\ID_{\Sigma_\CCC}})\times \mathrm{triv}(\CCC)\in\ICat$.
Both of these natural choices lead to the same theory of involutive 
colored sequences. Indeed, by Corollary \ref{cor:SigmaCCCtrivial}, 
there exists an $\ISMonCat$-isomorphism
$(\Sigma_{\CCC}, \mathrm{Rev}, \id_{\ID_{\Sigma_{\CCC}}}) \cong \triv(\Sigma_\CCC)$, 
which implies that $(\Sigma_\CCC,\mathrm{Rev}, \id_{\ID_{\Sigma_\CCC}})\times \mathrm{triv}(\CCC)
\cong \triv(\Sigma_\CCC)\times \triv(\CCC)\cong \mathrm{triv}(\Sigma_\CCC \times \CCC)$ in $\ICat$.
This motivates the following
\begin{defi}\label{def:invsymseq}
Let $\CCC\in\Set$ be a non-empty set.
The {\em involutive category of $\CCC$-colored symmetric sequences}
with values in a cocomplete involutive closed symmetric monoidal category
$(\MM,J,j)$ is defined via the exponential $2$-functor \eqref{eqn:expICat2fun} by
\begin{subequations}\label{eqn:symseqinvolutive}
\begin{flalign}
\big(\SymSeq_\CCC(\MM) ,J_\ast,j_\ast\big)~ :=~ (\MM,J,j)^{\triv(\Sigma_\CCC\times \CCC)}\quad.
\end{flalign}
Concretely, the endofunctor 
\begin{flalign}
J_\ast\, :=\, J^{\ID_{\Sigma_\CCC\times \CCC}} \,:\, \SymSeq_\CCC(\MM) ~\longrightarrow~\SymSeq_\CCC(\MM)
\end{flalign}
is given by post-composition with $J: \MM \to \MM$, i.e.\ $X\mapsto JX$,
and the natural isomorphism 
\begin{flalign}
j_\ast \,:=\, j^{\id_{\ID_{\Sigma_\CCC\times\CCC}}} \,:\, \ID_{\SymSeq_\CCC(\MM)} ~ \longrightarrow ~
J_\ast^2
\end{flalign} 
\end{subequations}
has components  ${j_\ast}_X := jX$ given by whiskering the natural isomorphism $j:\ID_\MM\to J^2$ 
and the functor $X : \Sigma_\CCC\times\CCC\to\MM$, for all $X \in \SymSeq_\CCC(\MM)$.
\end{defi}

We now show that the involutive category $(\SymSeq_\CCC(\MM) ,J_\ast,j_\ast)$ 
given in \eqref{eqn:symseqinvolutive} may be promoted to an involutive
monoidal category, extending the monoidal structure of Proposition 
\ref{propo:symseqmonoidal} to the involutive setting.
Recalling Definition \ref{def:invmoncat}, this amounts to
endowing the endofunctor $J_\ast : \SymSeq_\CCC(\MM) \to \SymSeq_\CCC(\MM)$ 
with the structure of a monoidal functor such that $j_\ast : \ID_{\SymSeq_\CCC(\MM)} \to J_\ast^2$ 
becomes a monoidal natural isomorphism.
We first define the natural transformation 
${J_\ast}_2 : \circ \, (J_\ast \times J_\ast) \to J_\ast\,\circ$ in terms of the components
\begin{flalign}\label{eqn:symseqJ2}
\xymatrix@C=2em{
\ar[d]_-{({J_\ast}_2)_{X,Y}}\big(J_\ast X\circ J_\ast Y\big)\big(\substack{t\\ \und{c}}\big) \ar@{=}[r] & \int\limits^{\und{a}}\int\limits^{(\und{b}_1,\dots,\und{b}_m)}  \Sigma_\CCC\big(\und{b}_1\otimes\cdots\otimes \und{b}_m,\und{c}\big)\otimes JX\big(\substack{t\\\und{a}}\big)
\otimes \Motimes_{i=1}^m JY\big(\substack{a_i\\ \und{b}_i}\big)
\ar[d]^-{\int \int \id\otimes J_2^m}  \\
\big(J_\ast(X\circ Y)\big)\big(\substack{t\\ \und{c}}\big) \ar[r]_-{\cong} &  \int\limits^{\und{a}}\int\limits^{(\und{b}_1,\dots,\und{b}_m)} 
\Sigma_\CCC\big(\und{b}_1\otimes\cdots\otimes \und{b}_m,\und{c}\big)\otimes J\Big( X\big(\substack{t\\\und{a}}\big)
\otimes \Motimes_{i=1}^m Y\big(\substack{a_i\\ \und{b}_i}\big)\Big)
}
\end{flalign}
for all $X,Y\in\SymSeq_\CCC(\MM)$ and all $(\und{c},t)\in\Sigma_\CCC\times\CCC$.
For the horizontal arrows we used the definition of the circle product \eqref{eqn:circleproduct} 
and the fact that $J : \MM\to \MM$ is self-adjoint (cf.\ Lemma \ref{lem:Jselfadjoint}), hence
it preserves coends and the $\Set$-tensoring. In the right vertical arrow
we denoted  by $J_2^m$ the $m$-fold iteration of the natural transformation 
$J_2 : \otimes \,(J\times J) \to J\,\otimes$ corresponding to the 
involutive symmetric monoidal category $(\MM,J,j)$. We next define
the $\SymSeq_\CCC(\MM)$-morphism ${J_\ast}_0 : I_\circ \to J_\ast  I_\circ$ for the circle unit \eqref{eqn:circleunit} by
\begin{flalign}\label{eqn:symseqJ0}
\xymatrix{
\ar@{=}[d]I_\circ\big(\substack{t\\\und{c}}\big) \ar[rr]^-{{J_\ast}_0}&& (J_\ast I_\circ )\big(\substack{t\\ \und{c}}\big)\ar[d]^-{\cong}\\
\Sigma_\CCC(t,\und{c})\otimes I \ar[rr]_-{\id\otimes J_0}&& \Sigma_\CCC(t,\und{c})\otimes JI
}
\end{flalign}
for all $(\und{c},t)\in \Sigma_\CCC\times\CCC$.
For the right vertical arrow we used again that $J:\MM\to\MM$ 
is self-adjoint and hence it preserves the $\Set$-tensoring.
In the bottom horizontal arrow $J_0 : I \to JI$ denotes the morphism
corresponding to the involutive symmetric monoidal category $(\MM,J,j)$.

\begin{theo}\label{theo:symseqinvmoncat}
The involutive category $(\SymSeq_\CCC(\MM),J_\ast,j_\ast)$ 
of \eqref{eqn:symseqinvolutive}  
becomes an involutive right closed monoidal category 
when the underlying category $\SymSeq_\CCC(\MM)$ is equipped 
with the circle monoidal structure of Proposition \ref{propo:symseqmonoidal} 
and the underlying endofunctor $J_\ast$ is equipped with the monoidal functor 
structure $({J_\ast}_2,{J_\ast}_0)$ of \eqref{eqn:symseqJ2} and \eqref{eqn:symseqJ0}. 
\end{theo}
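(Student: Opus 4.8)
The plan is to verify the axioms of Definition \ref{def:invmoncat} for the triple $(\SymSeq_\CCC(\MM), J_\ast, j_\ast)$ equipped with the circle monoidal structure and the proposed data $({J_\ast}_2, {J_\ast}_0)$. The right-closedness of the underlying monoidal category is already supplied by Proposition \ref{propo:symseqmonoidal}, and the identity $j_\ast J_\ast = J_\ast j_\ast$ holds because $(\SymSeq_\CCC(\MM), J_\ast, j_\ast)$ is an involutive category by Definition \ref{def:invsymseq}. Hence the new content is twofold: (i) that $({J_\ast}_2, {J_\ast}_0)$ promote $J_\ast$ to a monoidal endofunctor, and (ii) that $j_\ast$ is a monoidal natural isomorphism. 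The guiding principle is that every structure here is defined componentwise by coends assembled from $\otimes$, $J_2$, $J_0$ and the $\Set$-tensoring, so that each coherence condition reduces, argument by argument, to the corresponding condition already known to hold in the involutive symmetric monoidal category $(\MM, J, j)$.

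First I would confirm that ${J_\ast}_2$ and ${J_\ast}_0$ are well defined: the bottom isomorphisms in \eqref{eqn:symseqJ2} and \eqref{eqn:symseqJ0} exist because $J$ is self-adjoint (Lemma \ref{lem:Jselfadjoint}) and so preserves coends and the $\Set$-tensoring, while naturality of ${J_\ast}_2$ in $X$ and $Y$ follows from naturality of $J_2$ and functoriality of the coend. By Lemma \ref{lem:Jstrong} both maps are moreover isomorphisms, since $J_2$ and $J_0$ are.

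I would then turn to the coherence axioms of a monoidal functor. The two unitality conditions are the easier ones: on components they unwind to the unit coherences of $J$, using that ${J_\ast}_0$ is built from $J_0$ and that the unitors of the circle product are inherited from those of $\MM$. The associativity coherence is the substantial step. Here I would expand both composites on a component $\big(\substack{t\\\und{c}}\big)$ using the explicit associator of the circle product, which re-indexes a nested family of profiles and inserts block permutations $\tau\langle\cdot,\cdot\rangle$ from $\Sigma_\CCC$. One must check that the iterated transformation $J_2^m$ threads through this re-indexing compatibly; this follows from the associativity coherence axiom of the monoidal functor $J$, from $J$ preserving the $\Set$-tensoring, and from the compatibility of $J_2$ with the block transpositions, i.e.\ from $J$ being a \emph{symmetric} monoidal functor on $\MM$.

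Finally I would verify that $j_\ast$ is monoidal, i.e.\ the two diagrams of \eqref{eqn:jmonoidal} for $j_\ast$, ${J_\ast}_2$ and ${J_\ast}_0$. Since ${j_\ast}_X = jX$ is given by whiskering, on each component these diagrams collapse to the corresponding diagrams expressing that $j : \ID_\MM \to J^2$ is a monoidal natural transformation, which hold by hypothesis. The main obstacle is the associativity coherence for $J_\ast$ in the previous step: the circle product is not braided, and its associator carries the combinatorics of substituting profiles into profiles, so the bookkeeping needed to see that $J_2^m$ respects this substitution --- rather than any conceptual difficulty --- is where the argument is lengthiest.
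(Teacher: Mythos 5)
Your proposal is correct and follows essentially the same route as the paper, which simply asserts that verifying the monoidal functor axioms for $(J_\ast,{J_\ast}_2,{J_\ast}_0)$ and the monoidality of $j_\ast$ is a straightforward componentwise check; you have merely spelled out how that check reduces to the corresponding axioms for $(J,J_2,J_0)$ and $j$ in $(\MM,J,j)$, correctly identifying the associativity coherence (where $J_2^m$ must be threaded through the coend re-indexing and block permutations, using that $J$ is \emph{symmetric} monoidal) as the only step requiring real bookkeeping.
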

\begin{proof}
It is straightforward to confirm that
$(J_\ast,{J_\ast}_2,{J_\ast}_0): \SymSeq_\CCC(\MM)\to \SymSeq_\CCC(\MM)$, as defined in
\eqref{eqn:symseqinvolutive}, \eqref{eqn:symseqJ2} and \eqref{eqn:symseqJ0}, 
is a monoidal endofunctor with respect to the circle monoidal structure 
and that the natural isomorphism $j_\ast : \ID_{\SymSeq_\CCC(\MM)}\to J_\ast^2$ is monoidal. 
\end{proof}

\begin{rem}\label{rem:whynonrev}
Because $\SymSeq_\CCC(\MM)$ in general does not admit a braiding, 
the {\em non-reversing} notion of involutive monoidal category 
due to \cite{Jacobs} (see also Definition \ref{def:invmoncat}) and 
the {\em reversing} one considered in \cite{Egger, BeggsMajid} are a priori inequivalent.
This is indeed the case: 
While Theorem \ref{theo:symseqinvmoncat} equips the monoidal category 
$\SymSeq_\CCC(\MM)$ with a {\em non-reversing} involutive structure, 
one cannot obtain a {\em reversing} one 
as this requires to specify isomorphisms 
$J_\ast X \circ J_\ast Y \cong J_\ast(Y \circ X)$,
which in general do not exist by the following argument:
Assume that $I \not \cong \emptyset$ in $\MM$ 
(e.g.\ $\MM = \Vec_\bbC$) and that 
the set $\CCC$ has cardinality $\geq 2$. Define 
$X,Y\in \SymSeq_\CCC(\MM)$ by setting
\begin{flalign}
X\big(\substack{t\\\und{c}}\big) = \big(\Sigma_{\CCC}(t,t_0)\times \Sigma_{\CCC}(t_0,\und{c})\big)\otimes \1\quad,\qquad
Y\big(\substack{t\\\und{c}}\big) = \Sigma_{\CCC}(\emptyset,\und{c})\otimes I\quad,
\end{flalign}
for some fixed $t_0\in\CCC$. Recalling \eqref{eqn:circleproduct} we obtain 
\begin{flalign}
(X\circ Y)\big(\substack{t\\\und{c}}\big) 
~\cong~\Sigma_{\CCC}(t,t_0) \otimes Y\big(\substack{t\\ \und{c}}\big)
 \quad,\qquad 
 (Y\circ X)\big(\substack{t\\\und{c}}\big)~\cong~ Y\big(\substack{t\\ \und{c}}\big) \quad.
\end{flalign}
Since $J_\ast X \cong X$ and $J_\ast Y \cong Y$, 
we find for $t\neq t_0$ that 
$(J_\ast X\circ J_\ast Y)\big(\substack{t\\\emptyset}\big) 
\cong \emptyset \not\cong I \cong 
J_\ast (Y\circ X)\big(\substack{t\\\emptyset}\big)$.
This counterexample explains 
why the {\em non-reversing} involutive structures defined 
by \cite{Jacobs} are better suited for developing
the theory of colored $\ast$-operads 
than the {\em reversing} ones of \cite{Egger, BeggsMajid}. 
\end{rem}

Many interesting constructions in colored operad theory
arise from changing the underlying set of colors, see e.g.\
\cite{BeniniSchenkelWoike} for examples inspired by quantum field theory.
We shall now generalize the relevant constructions to the setting of involutive 
category theory.
\sk

Any map $f :\CCC\to\DDD$ of non-empty sets induces a functor
$f :  \Sigma_{\CCC} \to \Sigma_{\DDD}$ between the associated groupoids of profiles. 
Concretely, we have that $\und{c} = (c_1,\dots,c_n)\mapsto f(\und{c}) = (f(c_1),\dots,f(c_n))$.
This functor may be equipped with the obvious involutive 
symmetric monoidal structure such that it defines
an involutive symmetric monoidal functor
\begin{flalign}
(f,\id_f) \,:\, (\Sigma_\CCC,\ID_{\Sigma_\CCC},\id_{\ID_{\Sigma_\CCC}})~\longrightarrow
~(\Sigma_\DDD,\ID_{\Sigma_\DDD},\id_{\ID_{\Sigma_\DDD}})\quad.
\end{flalign}
Moreover, regarding $\CCC$ and $\DDD$ as discrete categories,
we obtain an involutive functor (denoted by the same symbol)
\begin{flalign}
(f,\id_f)\,:\, (\CCC,\ID_\CCC,\id_{\ID_\CCC})~\longrightarrow ~
(\DDD,\ID_{\DDD},\id_{\ID_{\DDD}})
\end{flalign}
between the associated trivial involutive categories. Using the product
and exponential $2$-functors (cf.\ \eqref{eqn:timesICat2fun}  and \eqref{eqn:expICat2fun}),
we may exponentiate the identity $\ID_{(\MM,J,j)} = (\ID_\MM,\id_J) $ involutive functor
by the product involutive functor $(f,\id_f) \times (f,\id_f) $ to obtain an involutive functor
\begin{flalign}\label{eqn:fastINVsymseq}
(f^\ast, \id_{f^\ast J_\ast}) \,:\, \big(\SymSeq_\DDD(\MM),J_\ast,j_\ast\big) ~
\longrightarrow ~\big(\SymSeq_\CCC(\MM),J_\ast,j_\ast \big) \quad\quad
\end{flalign}
describing the pullback along $f$ of $\DDD$-colored symmetric sequences
to $\CCC$-colored symmetric sequences. (Notice that $ f^\ast J_\ast = J_\ast f^\ast $
as functors from $\SymSeq_\DDD(\MM)$ to $\SymSeq_\CCC(\MM)$ because
$J_\ast$ is a pushforward and $f^\ast$ is a pullback.)
\begin{cor}\label{cor:coloradjunction}
For every map $f : \CCC\to\DDD$ between non-empty sets,
there exists an involutive adjunction (cf.\ Definition \ref{def:invadj}) 
\begin{flalign}
\xymatrix{
(f_!,\lambda_f) \,:\, \big(\SymSeq_\CCC(\MM),J_\ast,j_\ast \big) ~\ar@<0.5ex>[r]&\ar@<0.5ex>[l]  ~\big(\SymSeq_\DDD(\MM),J_\ast,j_\ast\big) \,:\, (f^\ast, \id_{f^\ast J_\ast })
}\quad.
\end{flalign}
\end{cor}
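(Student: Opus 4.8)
The plan is to deduce this corollary directly from Proposition \ref{propo:invadj}, taking the involutive functor $(f^\ast,\id_{f^\ast J_\ast})$ of \eqref{eqn:fastINVsymseq} as the right adjoint and producing $(f_!,\lambda_f)$ as its involutive left adjoint. First I would unwind what $f^\ast$ is at the level of underlying functors: by the definition of the exponential $2$-functor \eqref{eqn:expICat2fun}, the functor $f^\ast = (\ID_\MM)^{f\times f}: \SymSeq_\DDD(\MM)\to\SymSeq_\CCC(\MM)$ is precomposition with the functor $f\times f:\Sigma_\CCC\times\CCC\to\Sigma_\DDD\times\DDD$ induced by $f$ on the indexing categories, i.e.\ $X\mapsto X\,(f\times f)$. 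Since $\MM$ is cocomplete by hypothesis, the left Kan extension $f_! := \Lan_{f\times f}$ along this functor exists and furnishes an ordinary adjunction $f_!\dashv f^\ast$ of functor categories, with some unit $\eta:\ID\to f^\ast f_!$ and counit $\epsilon: f_! f^\ast\to\ID$.

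With this ordinary adjunction in hand, and with the involutive structure on $f^\ast$ given by $\rho = \id_{f^\ast J_\ast}$ (which is legitimate precisely because $f^\ast J_\ast = J_\ast f^\ast$, as noted below \eqref{eqn:fastINVsymseq}), all the hypotheses of Proposition \ref{propo:invadj} are satisfied with $(R,\rho) = (f^\ast,\id_{f^\ast J_\ast})$ and $L = f_!$. Invoking that proposition then simultaneously equips $f_!$ with a canonical involutive structure $\lambda_f : f_! J_\ast \to J_\ast f_!$ and certifies that $(f_!,\lambda_f)\dashv(f^\ast,\id_{f^\ast J_\ast})$ is an involutive adjunction in the sense of Definition \ref{def:invadj}. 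Concretely, the recipe of Proposition \ref{propo:invadj} gives $\lambda_f$ as the composite $f_! J_\ast \to f_! J_\ast f^\ast f_! = f_! f^\ast J_\ast f_! \to J_\ast f_!$ assembled from $f_! J_\ast\eta$ and $\epsilon J_\ast f_!$, where the middle equality replaces the step $f_!\,\rho^{-1}f_!$ since $\rho^{-1}=\id$; in particular $\lambda_f$ is a natural isomorphism by Lemma \ref{lem:nuiso}.

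The only point that requires genuine attention is the existence of the left adjoint $f_!$, and this is nothing more than the standard existence of left Kan extensions valued in a cocomplete category, which I would either cite or exhibit via the usual pointwise coend formula over the comma categories of $f\times f$. Beyond that, everything is formal: Proposition \ref{propo:invadj} absorbs the coherence verification (that $(f_!,\lambda_f)$ is an involutive functor satisfying \eqref{eqn:invfundiagram} and that $\eta,\epsilon$ are involutive natural transformations), so I do not anticipate a real obstacle. Should one prefer to bypass the abstract proposition, the alternative is a direct diagram chase checking those same conditions for the explicit $\lambda_f$ above, which collapses considerably thanks to $\rho$ being the identity; but applying Proposition \ref{propo:invadj} is the cleaner route.
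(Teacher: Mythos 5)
Your proposal is correct and coincides with the paper's own argument: the paper likewise obtains $f_!$ as the left Kan extension (using cocompleteness of $\MM$) and then invokes Proposition \ref{propo:invadj} with $(R,\rho)=(f^\ast,\id_{f^\ast J_\ast})$ to produce $\lambda_f$ and the involutive adjunction. Your additional unwinding of $\lambda_f$ and the remark that it is invertible via Lemma \ref{lem:nuiso} are accurate but not needed beyond what the proposition already provides.
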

\begin{proof}
By left Kan extension, the functor $f^\ast$ has a left adjoint $f_!$. The involutive structure
$\lambda_f$ on $f_!$ is the one described in Proposition \ref{propo:invadj}, which implies that
we have an involutive adjunction.
\end{proof}

The pullback functor $f^\ast: \SymSeq_\DDD(\MM) \to \SymSeq_\CCC(\MM)$ 
may be equipped with the following canonical monoidal structure:
The components of the natural transformation $f^\ast_2: \circ^\CCC 
(f^\ast \times f^\ast) \to f^\ast \circ^\DDD$ are specified by
\begin{flalign}\label{eqn:fast2}
\xymatrix{
\Sigma_\CCC \big( \und{b}_1 \otimes \cdots \otimes \und{b}_m, \und{c} \big) 
\otimes f^\ast X \big( \substack{ t \\ \und{a} } \big)
\otimes \bigotimes_{i=1}^m f^\ast Y \big( \substack{ a_i \\ \und{b}_i } \big) 
\ar[d]_-{f \otimes \id} \ar[r]
& \big( f^\ast X \circ^\CCC f^\ast Y \big) 
\big( \substack{ t \\ \und{c} } \big) \ar[d]^-{(f^\ast_2)_{X,Y}} \\ 
\Sigma_\DDD \big( f(\und{b}_1) \otimes \cdots \otimes f(\und{b}_m), 
f(\und{c}) \big) \otimes f^\ast X \big( \substack{ t \\ \und{a} } \big)
\otimes \bigotimes_{i=1}^m f^\ast Y \big( \substack{ a_i \\ \und{b}_i } \big)  
\ar[r] 
& \big( f^\ast (X \circ^\DDD Y) \big) \big( \substack{ t \\ \und{c} } \big) 
}
\end{flalign}
for all $X,Y \in \SymSeq_\DDD(\MM)$ and all $(\und{c},t) \in \Sigma_{\CCC} \times \CCC$.
The horizontal arrows are the canonical inclusions into the coend and the
left vertical arrow denotes the action of the functor $f : \Sigma_\CCC\to\Sigma_\DDD$ on
$\Hom$-sets. The $\SymSeq_\CCC(\MM)$-morphism $f^\ast_0: I_\circ^\CCC \to f^\ast I_\circ^\DDD$ 
is defined similarly by
\begin{flalign}\label{eqn:fast0}
\xymatrix{
f^\ast_0 \,:\, \1_\circ^\CCC\big(\substack{t\\\und{c}}\big) = \Sigma_\CCC(t,\und{c})\otimes \1 ~\ar[rr]^-{f\otimes\id}&&~
\Sigma_\DDD\big(f(t),f(\und{c})\big)\otimes \1 = f^\ast(\1_\circ^\DDD)\big(\substack{t\\\und{c}}\big)
}\quad,
\end{flalign}
for all $(\und{c},t) \in \Sigma_{\CCC} \times \CCC$. 

\begin{theo}\label{theo:colorpullbackSymSeq}
For every map $f : \CCC\to\DDD$ between non-empty sets,
the involutive functor $(f^\ast, \id_{f^\ast J_\ast }) : (\SymSeq_\DDD(\MM),J_\ast,j_\ast) 
\to (\SymSeq_\CCC(\MM),J_\ast,j_\ast)$ of \eqref{eqn:fastINVsymseq}
becomes an involutive monoidal functor when equipped with the 
monoidal structure $(f^\ast_2,f^\ast_0)$ of \eqref{eqn:fast2} and \eqref{eqn:fast0}. 
\end{theo}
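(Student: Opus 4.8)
The plan is to unwind Definition \ref{def:invmonfunandnat}: to promote the involutive functor $(f^\ast, \id_{f^\ast J_\ast})$ to an involutive monoidal functor I must verify three things, namely that the underlying triple $(f^\ast, f^\ast_2, f^\ast_0)$ is a (lax) monoidal functor for the circle products $\circ^\DDD$ and $\circ^\CCC$, that the involutive structure $\nu = \id_{f^\ast J_\ast}$ is a monoidal natural transformation in the sense of Remark \ref{rem:invmoncat2}, and that the coherence diagram \eqref{eqn:invfundiagram} commutes. The last of these requires no work: it involves only the involutive structures and $\nu$, and it holds by construction since $(f^\ast, \id_{f^\ast J_\ast})$ was already shown to be an involutive functor in \eqref{eqn:fastINVsymseq}. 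Since the circle product admits no braiding (Proposition \ref{propo:symseqmonoidal}, cf.\ Remark \ref{rem:whynonrev}), I work throughout with monoidal rather than symmetric monoidal functors.

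For the monoidal functor axioms I would exploit that every structure map in sight factorizes through a tensor factor on which $f$ acts and a complementary tensor factor on which it does not. Concretely, both $f^\ast_2$ of \eqref{eqn:fast2} and $f^\ast_0$ of \eqref{eqn:fast0} are induced by the $\Hom$-set action of the \emph{strict} symmetric monoidal functor $f : \Sigma_\CCC \to \Sigma_\DDD$, the remaining $\MM$-valued factors in \eqref{eqn:circleproduct} being left untouched. The associativity pentagon for $(f^\ast, f^\ast_2, f^\ast_0)$ then reduces, after passing through the canonical coend inclusions, to the functoriality of $f$ (compatibility of its $\Hom$-action with composition in $\Sigma_\DDD$) together with the associativity constraint of the circle product, which is itself a coend manipulation in the $\MM$-factors; the two unit triangles reduce to $f$ preserving the monoidal unit $\emptyset$ and the identity profile morphisms. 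This verification is insensitive to the involution and in fact establishes that $f^\ast$ is an ordinary monoidal functor for the circle product.

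It remains to check that $\nu = \id_{f^\ast J_\ast}$ is monoidal, i.e.\ that the two diagrams of Remark \ref{rem:invmoncat2} commute. Because $f^\ast J_\ast = J_\ast f^\ast$ strictly and $\nu$ is the identity, the horizontal arrows in both diagrams are identities, so the diagrams collapse to the assertions that $f^\ast_2$ is compatible with the monoidal structures ${J_\ast}_2$ of \eqref{eqn:symseqJ2} on source and target, and that $f^\ast_0$ is compatible with ${J_\ast}_0$ of \eqref{eqn:symseqJ0}. Here the separation noted above becomes decisive: $f^\ast_2$ and $f^\ast_0$ act only on the $\Set$-valued $\Sigma$-$\Hom$ factors, whereas ${J_\ast}_2$ (built from $J_2$) and ${J_\ast}_0$ (built from $J_0$) act only on the $\MM$-valued factors, so the relevant squares commute automatically because the two operations are applied to disjoint tensor factors and $J$ preserves the $\Set$-tensoring and coends by Lemma \ref{lem:Jselfadjoint}.

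I expect the main obstacle to be the coend bookkeeping in the associativity pentagon of the second step: one must track the iterated coends in \eqref{eqn:circleproduct} through the nested applications of $f^\ast_2$ and confirm that the $f$-action and the circle-product associator can be interchanged. Conceptually, however, this is routine once the separation between the $\Sigma$-$\Hom$ factor (on which $f$ acts) and the $\MM$-factors (on which the involutive and monoidal data of $\MM$ act) is made explicit; that same separation then renders every involutive compatibility in the third step immediate.
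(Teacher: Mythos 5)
Your proposal is correct and follows essentially the same route as the paper: the paper's proof simply notes that, by Definition \ref{def:invmonfunandnat}, all that remains is to check that $\id_{J_\ast f^\ast}: J_\ast f^\ast \to f^\ast J_\ast = J_\ast f^\ast$ is a monoidal natural transformation, "which is clearly the case." Your write-up supplies the details behind that one-liner — in particular the observation that $f^\ast_2, f^\ast_0$ act only on the $\Sigma$-$\Hom$ factors while ${J_\ast}_2, {J_\ast}_0$ act only on the $\MM$-valued factors — which is exactly the reason the paper's "clearly" is justified.
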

\begin{proof}
By Definition \ref{def:invmonfunandnat}, it remains to prove that
$\id_{J_\ast f^\ast} :  J_\ast f^\ast \to f^\ast J_\ast = J_\ast f^\ast$ is a monoidal natural transformation,
which is clearly the case.
\end{proof}

\subsection{$\ast$-objects}
We conclude this section by describing rather explicitly the
monoidal category 
\begin{flalign}\label{eqn:astObjSymSeq}
\astObj\big(\SymSeq_\CCC(\MM),J_\ast,j_\ast\big)\in\MonCat
\end{flalign}
of $\ast$-objects in the involutive monoidal category of symmetric sequences.
Given any $\ast$-object $(\ast : X\to J_\ast X)\in \astObj\big(\SymSeq_\CCC(\MM),J_\ast,j_\ast\big)$,
we consider its components at $(\und{c},t)\in\Sigma_\CCC\times\CCC$
and observe that this is precisely the same data as a symmetric sequence
with values in $\astObj(\MM,J,j)$, which is a cocomplete closed symmetric 
monoidal category, cf.\ Proposition \ref{propo:astObjmonoidal} and Remark \ref{rem:astObjlim}.
Similarly, one observes that a morphism in \eqref{eqn:astObjSymSeq}
is the same data as a morphism in $\SymSeq_\CCC(\astObj(\MM,J,j))$,
which means that these two categories are canonically isomorphic.
We now show that this isomorphism is compatible with the monoidal structures.
\begin{propo}\label{propo:astObjSymSeq}
The canonical identification above defines an isomorphism 
\begin{flalign}\label{eqn:astObjSymSeq2perspectives}
\astObj \big( \SymSeq_\CCC(\MM), J_\ast, j_\ast \big) 
~\cong~\SymSeq_\CCC \big( \astObj(\MM,J,j) \big) 
\end{flalign}
of monoidal categories. 
\end{propo}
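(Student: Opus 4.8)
We have already fixed the canonical isomorphism $\Phi$ of the underlying categories: it sends a $\ast$-object $(\ast : X \to J_\ast X)$ to the $\astObj(\MM,J,j)$-valued symmetric sequence whose value at $(\und{c},t) \in \Sigma_\CCC \times \CCC$ is the $\ast$-object $\ast_{(\und{c},t)} : X\big(\substack{t\\\und{c}}\big) \to J X\big(\substack{t\\\und{c}}\big)$ in $\MM$. The plan is to promote $\Phi$ to a monoidal isomorphism, comparing the monoidal structure on the left-hand side (Theorem \ref{theo:symseqinvmoncat} together with Proposition \ref{propo:astObjmonoidal}) with the circle monoidal structure on the right-hand side (Proposition \ref{propo:symseqmonoidal} applied to the target $\astObj(\MM,J,j)$). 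I expect $\Phi$ to be \emph{strict} monoidal, so that it carries circle products to circle products and the circle unit to the circle unit on the nose, the coherence data then being forced. The key tool is that the forgetful functor $U : \astObj(\MM,J,j) \to \MM$ is strict monoidal (with $U_2 = \id$ and $U_0 = \id$) and creates all colimits (Remark \ref{rem:astObjlim}), hence preserves the $\Set$-tensoring and the coends appearing in the circle product \eqref{eqn:circleproduct}.

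For the tensor products, let $(\ast_X : X \to J_\ast X)$ and $(\ast_Y : Y \to J_\ast Y)$ be $\ast$-objects and write $\widetilde{X} = \Phi(\ast_X)$, $\widetilde{Y} = \Phi(\ast_Y)$. Computing $\widetilde{X} \circ \widetilde{Y}$ in $\SymSeq_\CCC(\astObj(\MM,J,j))$ by the coend \eqref{eqn:circleproduct} and applying $U$, the underlying $\MM$-object at $(\und{c},t)$ is precisely $(X \circ Y)\big(\substack{t\\\und{c}}\big)$, since $U$ creates the relevant colimits; so only the $\ast$-structures remain to be matched. By Proposition \ref{propo:astObjmonoidal}, each summand $\Sigma_\CCC(\und{b}_1 \otimes \cdots \otimes \und{b}_m, \und{c}) \otimes \widetilde{X}\big(\substack{t\\\und{a}}\big) \otimes \bigotimes_{i=1}^m \widetilde{Y}\big(\substack{a_i\\\und{b}_i}\big)$, tensored in $\astObj(\MM,J,j)$, carries the $\ast$-structure $\id \otimes \big( J_2^m \circ (\ast_X \otimes \bigotimes_{i=1}^m \ast_Y) \big)$, where $J_2^m$ is exactly the iterate of $J_2$ used in \eqref{eqn:symseqJ2} and the trivially involutive hom-set contributes the leading $\id$. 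Passing to the coend, the factor $\ast_X \otimes \bigotimes_{i=1}^m \ast_Y$ assembles to $\ast_X \circ \ast_Y : X \circ Y \to J_\ast X \circ J_\ast Y$ and the factor $J_2^m$ assembles to $({J_\ast}_2)_{X,Y}$ of \eqref{eqn:symseqJ2}; hence the induced $\ast$-structure is $({J_\ast}_2)_{X,Y} \circ (\ast_X \circ \ast_Y)$, which by Proposition \ref{propo:astObjmonoidal} is exactly the $\ast$-structure of the circle product on the left-hand side. Thus the tensor products agree under $\Phi$.

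For the units, the circle unit of $\SymSeq_\CCC(\astObj(\MM,J,j))$ is the sequence $\big(\substack{t\\\und{c}}\big) \mapsto \Sigma_\CCC(t,\und{c}) \otimes (J_0 : I \to JI)$, whose $\ast$-structure is the coproduct $\id \otimes J_0$ of the unit $\ast$-object of $\astObj(\MM,J,j)$; by \eqref{eqn:symseqJ0} this is precisely ${J_\ast}_0$, so the units match as well. Finally, the associator and unitors on the left-hand side are obtained by applying the $2$-functor $\astObj$ to those of $(\SymSeq_\CCC(\MM),J_\ast,j_\ast)$, so their underlying $\SymSeq_\CCC(\MM)$-morphisms coincide with the associator and unitors of $\SymSeq_\CCC(\MM)$, exactly as on the right-hand side; since the forgetful functor to $\SymSeq_\CCC(\MM)$ is faithful and both sides now have identical underlying $\ast$-objects, these coherence isomorphisms agree too. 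Therefore $\Phi$ is an isomorphism of monoidal categories, indeed a strict one. The only genuine bookkeeping is the tensor-product comparison of the second paragraph, where one tracks the iterated $J_2^m$ through the coend of \eqref{eqn:circleproduct}; the coherence comparison is then immediate from faithfulness of $U$.
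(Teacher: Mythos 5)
Your proposal is correct and follows essentially the same route as the paper: both reduce the monoidal comparison to tracking the composite $({J_\ast}_2)_{X,Y}\,(\ast\circ\ast^\prime)$ through the coend and $\Set$-tensoring of \eqref{eqn:circleproduct}, identifying it summand-wise with the iterated tensor product of $\ast$-objects $J_2^m\,(\ast\otimes\Motimes\ast^\prime)$ in $\astObj(\MM,J,j)$, and handling the unit via \eqref{eqn:symseqJ0}. The only cosmetic difference is that you assert the identification is strict while the paper phrases it as a canonical natural isomorphism (since $J$ preserves coends and the $\Set$-tensoring only up to canonical isomorphism); this does not affect the validity of the argument.
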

\begin{proof}
It remains to prove that our canonical isomorphism of categories is monoidal,
i.e.\ that tensor products and units are preserved up to coherent isomorphisms.
Given two objects $\ast : X\to J_\ast X$ and $\ast^\prime : Y\to J_\ast Y$
in $\astObj \big( \SymSeq_\CCC(\MM), J_\ast, j_\ast \big)$, their tensor product
reads as
\begin{flalign}
\big(\ast : X\to J_\ast X\big)\circ \big(\ast^\prime: Y\to J_\ast Y\big)
\,=\, \Big(\xymatrix@C=3em{
X\circ Y \ar[r]^-{\ast\circ\ast^\prime} & J_\ast X\circ J_\ast Y \ar[r]^-{({J_\ast}_2)_{X,Y}} & J_\ast (X\circ Y)
}\Big)\quad.
\end{flalign}
By a brief calculation one shows that the composed morphism on the right-hand side
of this equation is induced by functoriality of coends and $\Set$-tensoring via the family of maps
\begin{flalign}\label{eqn:XYYYtimestmp}
\xymatrix@C=2em{X\big(\substack{t\\\und{a}}\big)\otimes \Motimes_{i=1}^m Y\big(\substack{a_i\\\und{b}_i}\big)
\ar[rr]^-{\ast\otimes\Motimes_{i=1}^m \ast^\prime} &&
JX\big(\substack{t\\\und{a}}\big)\otimes \Motimes_{i=1}^m JY\big(\substack{a_i\\\und{b}_i}\big)\ar[r]^-{J_2^m}
&J\big(X\big(\substack{t\\\und{a}}\big)\otimes \Motimes_{i=1}^m Y\big(\substack{a_i\\\und{b}_i}\big)\big)
}\quad.
\end{flalign}
Notice that \eqref{eqn:XYYYtimestmp} is the tensor product
$\big(\ast : X\big(\substack{t\\\und{a}}\big)\to JX\big(\substack{t\\\und{a}}\big)\big) \otimes\Motimes_{i=1}^m
\big(\ast^\prime : Y\big(\substack{a_i\\\und{b}_i}\big)\to JY\big(\substack{a_i\\\und{b}_i}\big)\big)$
in $\astObj(\MM,J,j)$. Because $J$ preserves coends and the $\Set$-tensoring,
we obtain the natural isomorphism relating the tensor products on both sides of \eqref{eqn:astObjSymSeq2perspectives}.
A similar construction provides the isomorphism relating the units.
\end{proof}


\section{\label{sec:astOp}Colored $\ast$-operads}
Let $\CCC\in\Set$ be any non-empty set. We briefly recall the concept of $\CCC$-colored operads. 
\begin{defi}\label{def:Op}
The category of {\em $\CCC$-colored operads} with values in a cocomplete 
closed symmetric monoidal category $\MM$ is the category of monoids 
(cf.\ Definition \ref{def:monoid}) in the monoidal category $\SymSeq_\CCC(\MM)$ 
(cf.\ \eqref{eqn:symseq}, \eqref{eqn:circleproduct} and \eqref{eqn:circleunit}), i.e.\
\begin{flalign}
\Op_\CCC(\MM)~:=~ \Mon\big(\SymSeq_\CCC(\MM)\big) \quad.
\end{flalign}
\end{defi}

Using the concepts and techniques that we have developed so far in this paper,
the above definition admits the following natural generalization to involutive category theory.
\begin{defi}\label{def:astOp}
The category of {\em $\CCC$-colored $\ast$-operads} with values in a cocomplete involutive 
closed symmetric monoidal category $(\MM,J,j)$ is the category of $\ast$-monoids (cf.\ Definition \ref{def:astmonoids})
in the involutive monoidal category $(\SymSeq_\CCC(\MM),J_\ast,j_\ast)$ 
(cf.\ Theorem \ref{theo:symseqinvmoncat}), i.e.\
\begin{flalign}
\astOp_\CCC(\MM,J,j)~:=~ \astMon\big(\SymSeq_\CCC(\MM),J_\ast,j_\ast \big) \quad.
\end{flalign}
\end{defi}

\begin{rem}\label{rem:astOp}
It is worth to specialize Remark \ref{rem:astmonoids} to the present case. 
We observe that a $\CCC$-colored $\ast$-operad is a quadruple $(\O,\gamma,\oone,\ast)$ 
consisting of a $\CCC$-colored symmetric sequence $\O\in\SymSeq_\CCC(\MM)$ 
and three $\SymSeq_\CCC(\MM)$-morphisms $\gamma : \O\circ \O\to \O$ 
(called {\em operadic composition}), $\oone : I_\circ \to \O$ (called {\em operadic unit}) 
and $\ast : \O\to J_\ast\O$ (called {\em $\ast$-involution}), 
which satisfy the following conditions: 
\begin{itemize}
\item[(1)] $(\O,\gamma,\oone)$ is a monoid in $(\SymSeq_\CCC(\MM),\circ,I_\circ)$, i.e.\ the diagrams 
\begin{flalign}
\xymatrix{
\ar[d]_-{\gamma\circ\id}(\O\circ\O)\circ\O \ar[r]^-{\cong} & \O\circ(\O\circ\O) \ar[r]^-{\id\circ\gamma} & \O\circ \O\ar[d]^-{\gamma}\\
\O\circ\O \ar[rr]_-{\gamma}&& \O
}\qquad\quad
\xymatrix{
\ar[dr]_-{\cong}I_\circ \circ\O \ar[r]^-{\oone\circ \id} & \O\circ\O \ar[d]^-{\gamma}& \ar[l]_-{\id\circ\oone} \O\circ I_\circ\ar[dl]^-{\cong}\\
&\O&
}
\end{flalign}
in $\SymSeq_\CCC(\MM)$ commute;

\item[(2)] $\ast: \O \to J_\ast\O$ is a $\ast$-object in $(\SymSeq_\CCC(\MM),J_\ast,j_\ast)$, 
i.e.\ the diagram 
\begin{flalign}
\xymatrix{
\ar[dr]_-{(j_\ast)_\O}\O \ar[r]^{\ast} & J_\ast\O\ar[d]^-{J_\ast\ast}\\
&J_\ast^2\O
}
\end{flalign}
in $\SymSeq_\CCC(\MM)$ commutes; 

\item[(3)] these two structures are compatible, i.e.\ the diagrams 
\begin{flalign}
\xymatrix{
\ar[d]_-{{J_\ast}_0}I_\circ \ar[r]^-{\oone} & \O\ar[d]^-{\ast}\\
J_\ast I_\circ \ar[r]_-{J_\ast \oone}& J_\ast\O
}\qquad\qquad
\xymatrix@C=3.5em{
\ar[d]_-{\gamma}\O\circ\O\ar[r]^-{\ast\circ\ast} & J_\ast\O\circ J_\ast\O\ar[r]^-{({J_\ast}_2)_{\O,\O}} & J_\ast(\O\circ\O)\ar[d]^-{J_\ast\gamma}\\
\O \ar[rr]_-{\ast}&&J_\ast\O
}
\end{flalign}
in $\SymSeq_\CCC(\MM)$ commute. 
\end{itemize}
In particular, there exist two equivalent interpretations of a colored $\ast$-operad: 
The first option is to regard $(\O,\gamma,\oone)$ 
as an ordinary $\CCC$-colored operad valued in $(\MM,J,j)$, equipped with 
an operad morphism $\ast : (\O,\gamma,\oone) \to \Mon(J_\ast)(\O,\gamma,\oone)$. 
The second option is to regard $\ast: \O \to J_\ast\O$ as a $\ast$-object 
in $(\SymSeq_\CCC(\MM),J_\ast,j_\ast)$, equipped with the structure of a monoid 
consisting of the $\ast$-morphisms 
$\gamma: (\ast: \O \to J_\ast\O) \circ (\ast: \O \to J_\ast\O) \to (\ast: \O \to J_\ast\O)$ 
and $\oone: ({J_\ast}_0: I_\circ \to J_\ast I_\circ) \to (\ast: \O \to J_\ast\O)$. 
\end{rem}

\begin{propo}\label{propo:OpastObj}
The category of $\CCC$-colored $\ast$-operads with values in a cocomplete involutive 
closed symmetric monoidal category $(\MM,J,j)$ is isomorphic to 
the category of $\CCC$-colored operads with values in the cocomplete closed symmetric
monoidal category $\astObj(\MM,J,j)$, 
i.e.\ there exists an isomorphism
\begin{flalign}
\astOp_\CCC(\MM,J,j)~ \cong~ \Op_\CCC \big( \astObj(\MM,J,j) \big) 
\end{flalign}
of categories. 
\end{propo}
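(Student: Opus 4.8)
The plan is to obtain the statement as a direct consequence of the monoidal isomorphism of Proposition~\ref{propo:astObjSymSeq}, combined with the defining factorization of the $2$-functor $\astMon$. First I would unfold the left-hand side by Definition~\ref{def:astOp}, namely
\[
\astOp_\CCC(\MM,J,j) = \astMon\big(\SymSeq_\CCC(\MM),J_\ast,j_\ast\big).
\]
The crucial point is that $\astMon$ was introduced in Definition~\ref{def:astmonoids} as the diagonal of the commutative square \eqref{eqn:astObjMoncommute}; reading that square along the top-then-right edge exhibits $\astMon = \Mon\circ\astObj$, where $\astObj : \mathbf{I(S)MCat}\to\mathbf{(S)MCat}$ is the monoidal lift from \eqref{eqn:astObjmonoidal2fun} and $\Mon : \mathbf{(S)MCat}\to\Cat$. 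Substituting this factorization yields
\[
\astOp_\CCC(\MM,J,j) = \Mon\big(\astObj(\SymSeq_\CCC(\MM),J_\ast,j_\ast)\big).
\]

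Next I would feed in Proposition~\ref{propo:astObjSymSeq}, which supplies an isomorphism $\astObj(\SymSeq_\CCC(\MM),J_\ast,j_\ast)\cong\SymSeq_\CCC(\astObj(\MM,J,j))$ of (symmetric) monoidal categories. Since $\Mon$ is a $2$-functor it sends invertible $1$-morphisms to invertible functors, so applying it to this monoidal isomorphism gives
\[
\Mon\big(\astObj(\SymSeq_\CCC(\MM),J_\ast,j_\ast)\big)\cong\Mon\big(\SymSeq_\CCC(\astObj(\MM,J,j))\big).
\]
By Definition~\ref{def:Op} the right-hand side is exactly $\Op_\CCC(\astObj(\MM,J,j))$, and chaining the three displays produces the claimed isomorphism.

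Since the genuine content -- the compatibility of $\ast$-objects of symmetric sequences with the circle product -- has already been discharged in Proposition~\ref{propo:astObjSymSeq}, no serious obstacle remains; the present statement is essentially a corollary. The only points requiring attention are bookkeeping: I must traverse the square \eqref{eqn:astObjMoncommute} along the edge that realizes $\ast$-monoids as ordinary monoids in the genuinely monoidal category $\astObj(\SymSeq_\CCC(\MM),J_\ast,j_\ast)$, so that Proposition~\ref{propo:astObjSymSeq} is applicable, and I must note that $2$-functoriality of $\Mon$ automatically transports a monoidal isomorphism to a plain isomorphism of categories. If desired, one can also spell out the isomorphism on objects, sending a $\ast$-operad $(\O,\gamma,\oone,\ast)$ to the operad in $\astObj(\MM,J,j)$ whose entries are the $\ast$-objects $\ast:\O\big(\substack{t\\\und c}\big)\to J\,\O\big(\substack{t\\\und c}\big)$, in agreement with the componentwise description recorded in Remark~\ref{rem:astOp}; but this explicit form is a reformulation rather than an extra proof obligation.
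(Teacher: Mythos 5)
Your proposal is correct and follows exactly the paper's own argument: unfold $\astOp_\CCC(\MM,J,j)$ via Definitions \ref{def:astOp} and \ref{def:astmonoids} as $\Mon\big(\astObj(\SymSeq_\CCC(\MM),J_\ast,j_\ast)\big)$, apply the monoidal isomorphism of Proposition \ref{propo:astObjSymSeq} under $\Mon$, and identify the result with $\Op_\CCC(\astObj(\MM,J,j))$ by Definition \ref{def:Op}. The paper's proof is precisely this three-step chain of $\Cat$-isomorphisms, so nothing further is needed.
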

\begin{proof}
This is proven by the following chain of $\Cat$-isomorphisms
\begin{flalign}\nn
\astOp_\CCC(\MM,J,j) 
& ~=~ \Mon \left( \astObj \big( \SymSeq_\CCC(\MM),J_\ast,j_\ast \big) \right) \\
& ~\cong~ \Mon \left( \SymSeq_\CCC \big( \astObj(\MM,J,j) \big) \right) ~=~ \Op_\CCC \big( \astObj(\MM,J,j) \big) \quad,
\end{flalign}
where in the first step we used Definitions \ref{def:astOp} and \ref{def:astmonoids},
in the second step Proposition \ref{propo:astObjSymSeq} and in the last step
Definition \ref{def:Op}.
\end{proof}

\begin{rem}\label{rem:OpastObj}
Proposition \ref{propo:OpastObj} may be summarized by the following slogan: 
Colored $\ast$-operads are the same things as colored operads in $\ast$-objects. 
We would like to stress that this result, whose proof
relies on the whole spectrum of techniques for involutive category theory
developed in \cite{Jacobs} and in the previous sections of the present paper, 
does not make the definition of operads as $\ast$-monoids unnecessary.
Being able to switch between these two equivalent perspectives on colored $\ast$-operads
is valuable for various reasons. On the one hand, when interpreted as ordinary
colored operads in $\astObj(\MM,J,j)$, it is straightforward to transfer structural results and techniques
from ordinary operad theory to involutive operad theory. On the other hand, when interpreted
according to Definition \ref{def:astOp} as $\ast$-monoids, it is relatively easy to equip known
examples of ordinary colored operads with a suitable $\ast$-involution, see Section \ref{sec:QFTs}
for a specific class of examples. Moreover, this perspective relates to the involutive monoid and monad 
theory initiated in \cite{Jacobs}, see also Section \ref{sec:astAlg} below.
\end{rem}

We shall now study the behavior of colored $\ast$-operads
under changing the underlying set of colors. Let $f :\CCC\to\DDD$
be a map between non-empty sets. By Theorem \ref{theo:colorpullbackSymSeq},
we obtain an involutive monoidal functor
$(f^\ast, \id_{f^\ast J_\ast }) : (\SymSeq_\DDD(\MM),J_\ast,j_\ast) 
\to (\SymSeq_\CCC(\MM),J_\ast,j_\ast)$. As a consequence of
$2$-functoriality of $\astMon : \IMonCat\to \Cat$ (cf.\ Definition \ref{def:astmonoids})
and the definition of colored $\ast$-operads (cf.\ Definition \ref{def:astOp}),
we obtain
\begin{propo}
For every map $f : \CCC\to\DDD$ between non-empty sets, there exists
a functor
\begin{flalign}
f^\ast := \astMon(f^\ast, \id_{f^\ast J_\ast })\,:\,  \astOp_{\DDD}(\MM, J,j) ~\longrightarrow~ \astOp_{\CCC}(\MM, J,j) \quad,
\end{flalign}
which we call the pullback functor.
\end{propo}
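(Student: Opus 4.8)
The plan is to obtain $f^\ast$ with no new computation, simply by transporting the involutive monoidal functor of Theorem \ref{theo:colorpullbackSymSeq} through the $2$-functor $\astMon : \IMonCat \to \Cat$ of Definition \ref{def:astmonoids}. First I would invoke Theorem \ref{theo:colorpullbackSymSeq}, which guarantees that
\[
(f^\ast, \id_{f^\ast J_\ast}) \,:\, (\SymSeq_\DDD(\MM),J_\ast,j_\ast) ~\longrightarrow~ (\SymSeq_\CCC(\MM),J_\ast,j_\ast)
\]
is a $1$-morphism in $\IMonCat$, i.e.\ an involutive monoidal functor between the two involutive monoidal categories of colored symmetric sequences supplied by Theorem \ref{theo:symseqinvmoncat}.

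Second, I would apply the $2$-functor $\astMon$ to this $1$-morphism. As a $2$-functor sends $1$-morphisms of $\IMonCat$ to functors, this directly produces a functor $\astMon(f^\ast, \id_{f^\ast J_\ast}) : \astMon(\SymSeq_\DDD(\MM),J_\ast,j_\ast) \to \astMon(\SymSeq_\CCC(\MM),J_\ast,j_\ast)$. Finally, unwinding Definition \ref{def:astOp}, the source and target of this functor are by definition $\astOp_\DDD(\MM,J,j)$ and $\astOp_\CCC(\MM,J,j)$ respectively, which identifies it with the asserted pullback functor.

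The point worth stressing is that there is essentially no obstacle left at this stage: all the genuine verification, namely that $(f^\ast, \id_{f^\ast J_\ast})$ is really involutive monoidal, has already been carried out in Theorem \ref{theo:colorpullbackSymSeq}, and the remaining argument is a purely formal application of $2$-functoriality. Should one want a more hands-on description of the resulting functor, the cleanest route is via Proposition \ref{propo:OpastObj}: under the isomorphisms $\astOp_\CCC(\MM,J,j) \cong \Op_\CCC(\astObj(\MM,J,j))$ (and likewise over $\DDD$), the functor $f^\ast$ corresponds to the ordinary operadic color-change pullback along $f$, now performed internally to the closed symmetric monoidal category $\astObj(\MM,J,j)$. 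This second viewpoint also provides a convenient consistency check, confirming that the involutive pullback reduces to the familiar operadic one upon passing to $\ast$-objects.
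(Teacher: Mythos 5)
Your proposal is correct and follows essentially the same route as the paper: the paper likewise obtains the pullback functor by applying the $2$-functor $\astMon$ to the involutive monoidal functor $(f^\ast,\id_{f^\ast J_\ast})$ supplied by Theorem \ref{theo:colorpullbackSymSeq} and then identifying source and target via Definition \ref{def:astOp}. Your closing remark relating this to the ordinary operadic pullback through Proposition \ref{propo:OpastObj} is a sound consistency check, though the paper does not include it at this point.
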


Using the pullback functor, we may define the category of $\ast$-operads with varying colors.
\begin{defi}\label{def:astOpcolor}
We denote by $\astOp(\MM,J,j)$ the category of {\em colored 
$\ast$-operads} with values in $(\MM,J,j)$. The objects are pairs
$( \CCC, \O)$ consisting of a non-empty set $\CCC\in\Set$ 
and a $\CCC$-colored $\ast$-operad $\O \in\astOp_\CCC(\MM,J,j)$. 
The morphisms are pairs $(f,\phi) : ( \CCC, \O ) \to ( \DDD, \P)$ 
consisting of a map $f :\CCC\to\DDD$ between non-empty sets
and a $\astOp_\CCC(\MM,J,j)$-morphism 
$\phi : \O \to f^\ast \P$.
\end{defi}

\begin{rem}
There exists a projection functor $\pi : \astOp(\MM,J,j)\to \Set$,
given explicitly by $( \CCC, \O) \mapsto \CCC$, whose fiber $\pi^{-1}(\CCC)$ over 
$\emptyset\neq \CCC\in\Set$ is isomorphic to the category 
$\astOp_\CCC(\MM,J,j)$ of $\CCC$-colored $\ast$-operads.
\end{rem}


\section{\label{sec:astAlg}$\ast$-algebras over colored $\ast$-operads}
A convenient description of algebras over colored operads
is in terms of algebras over their associated monads.
Let us briefly review the relevant constructions before generalizing them to 
the setting of involutive categories.
\sk

Let $\CCC\in\Set$ be a non-empty set of colors.
Recall that the category of {\em $\CCC$-colored objects} with values in $\MM$
is the functor category $\MM^\CCC$.
We may equivalently regard $\MM^\CCC$ as the full subcategory
of $\SymSeq_\CCC(\MM)$ consisting of all functors
$X : \Sigma_\CCC\times\CCC\to \MM$ such that $X\big(\substack{t\\\und{c}}\big) =\emptyset$, 
for all $(\und{c},t)\in\Sigma_\CCC\times\CCC$ with length $\vert\und{c}\vert \geq 1$.
We introduce the notation $X_t := X\big(\substack{t\\\emptyset}\big)$,
for all $t\in\CCC$.
\sk

Given any $\CCC$-colored operad $\O\in \Op_\CCC(\MM)$,
the endofunctor $\O\circ (-) : \SymSeq_\CCC(\MM)\to \SymSeq_\CCC(\MM)$
restricts to an endofunctor
\begin{flalign}\label{eqn:Ocircrestricted}
\O\circ(-) \,:\,\MM^\CCC~\longrightarrow~\MM^\CCC
\end{flalign}
on the category of colored objects. Because $\O$ is by definition a monoid in 
$\SymSeq_\CCC(\MM)$, with multiplication $\gamma$ and unit $\oone$,
it follows that \eqref{eqn:Ocircrestricted} canonically carries the structure of a
{\em monad} in the category $\MM^\CCC$. We refer to \cite[Chapter VI]{MacLane}
for details on monad theory. Concretely, the structure natural transformations
$\gamma : \O\circ (\O\circ (-)) \to \O\circ(-)$ and $\oone : \ID_{\MM^\CCC} \to \O\circ (-)$,
which we denote with abuse of notation by the same symbols as the operadic composition and unit,
are given by the components
\begin{flalign}\label{eqn:monadstructuremaps}
\xymatrix{
\O\circ(\O\circ X) \ar[r]^-{\gamma_X^{}} & \O\circ X\\
\ar[u]^-{\cong}(\O\circ\O)\circ X\ar[ru]_-{\gamma\circ\id}&
}\qquad
\xymatrix{
X\ar[r]^-{\oone_X^{}} & \O\circ X\\
\ar[u]^-{\cong}I_\circ\circ X\ar[ur]_-{\oone\circ \id}&
}
\end{flalign}
for all $X\in\MM^\CCC$.

\begin{defi}\label{def:alg}
The category $\Alg(\O)$ of {\em algebras over a $\CCC$-colored operad} 
$\O \in \Op_\CCC(\MM)$ is the category of algebras 
over the monad $\O \circ (-) : \MM^\CCC \to \MM^\CCC$.
Concretely, an object of $\Alg(\O)$ is a pair $(A,\alpha)$ 
consisting of an object $A \in \MM^\CCC$ and an $\MM^\CCC$-morphism 
$\alpha: \O \circ A \to A$ such that $\alpha\, (\O \circ \alpha) = \alpha\, \gamma_A$ 
and $\alpha\, \oone_A = \id_A$. 
An $\Alg(\O)$-morphism $\varphi: (A,\alpha) \to (B,\beta)$ 
is an $\MM^\CCC$-morphism $\varphi: A \to B$  that 
preserves the structure maps, i.e.\ 
$\beta\, (\O \circ \varphi) = \varphi\, \alpha$. 
\end{defi}

The assignment of the categories of algebras to colored operads
is functorial 
\begin{flalign}\label{eqn:Alg}
\Alg \,:\, \Op(\MM)^\op ~\longrightarrow ~\Cat
\end{flalign}
with respect to the category $\Op(\MM)$ 
of colored operads with varying colors. Concretely, given any $\Op(\MM)$-morphism
$(f,\phi) : (\CCC,\O)\to (\DDD,\P)$, i.e.\ a map of non-empty sets
$f:\CCC\to\DDD$ together with an $\Op_\CCC(\MM)$-morphism
$\phi : \O\to f^\ast \P$, we define a functor
\begin{subequations}\label{eqn:Algpullback}
\begin{flalign}
(f,\phi)^\ast:= \Alg(f,\phi)\,:\, \Alg(\P)~\longrightarrow~\Alg(\O)
\end{flalign}
by setting
\begin{flalign}
(f,\phi)^\ast\big(A,\alpha\big) ~:=~\Big(f^\ast A , \xymatrix@C=2.7em{\O\circ f^\ast A
\ar[r]^-{\phi\circ \id} & f^\ast\P\circ f^\ast A  \ar[r]^-{(f^\ast_2)_{\P,A}} & f^\ast(\P\circ A) \ar[r]^-{f^\ast \alpha} & f^\ast A
} \Big)\quad,
\end{flalign}
\end{subequations}
for all $\P$-algebras $(A , \alpha : \P\circ A\to A)\in \Alg(\P)$. (The natural transformation
$f_2^\ast$ was defined in \eqref{eqn:fast2}.) 
Furthermore, as a consequence of the adjoint lifting theorem \cite[Chapter 4.5]{handbook2},
it follows that the functor $(f,\phi)^\ast$ admits a left adjoint (called {\em operadic left Kan extension}), 
i.e.\ we obtain an adjunction
\begin{flalign}\label{eqn:changeOp}
\xymatrix{
(f,\phi)_! \,:\, \Alg(\O)~\ar@<0.5ex>[r]&\ar@<0.5ex>[l]  ~\Alg(\P) \,:\, (f,\phi)^\ast
}\quad,
\end{flalign}
for every $\Op(\MM)$-morphism $(f,\phi) : (\CCC,\O)\to (\DDD,\P)$.
See for example \cite{BergerMoerdijk,BeniniSchenkelWoike} for further details and 
also \cite{BeniniSchenkelWoike} for applications of these adjunctions
to quantum field theory. 
\sk

We develop now a generalization of these definitions and constructions
to the setting of involutive categories. Let $(\MM,J,j)$ be
a cocomplete involutive closed symmetric monoidal category.
The involutive analog of the category of $\CCC$-colored objects
is obtained by using the exponential $2$-functor \eqref{eqn:expICat2fun}
to form $(\MM,J,j)^{\mathrm{triv}(\CCC)}\in\ICat$.
Notice that the full subcategory embedding $\MM^\CCC\hookrightarrow \SymSeq_\CCC(\MM)$ 
can be equipped with an obvious involutive structure, 
thus providing an $\ICat$-isomorphism between
$(\MM,J,j)^{\mathrm{triv}(\CCC)}$ and the involutive
category obtained by restricting the involutive structure on
$(\SymSeq_\CCC(\MM),J_\ast,j_\ast)$ to the full subcategory
$\MM^\CCC\subseteq \SymSeq_\CCC(\MM)$. In the following we
shall always suppress this isomorphism and identify the involutive categories 
\begin{flalign}
(\MM^\CCC,J_\ast,j_\ast) ~\cong~(\MM,J,j)^{\mathrm{triv}(\CCC)}\quad.
\end{flalign}
Given a $\CCC$-colored $\ast$-operad
$\O\in\astOp_\CCC(\MM,J,j)$ in the sense of Definition \ref{def:astOp} 
(see also Remark \ref{rem:astOp} for a more explicit description), 
we obtain an involutive endofunctor
\begin{subequations}\label{eqn:involutivemonadfromastoperad}
\begin{flalign}
\big(\O\circ (-), \nu \big)\,:\, (\MM^\CCC,J_\ast,j_\ast)~\longrightarrow~(\MM^\CCC,J_\ast,j_\ast)\quad
\end{flalign}
with the natural transformation $\nu : \O\circ J_\ast(-) \to J_\ast(\O\circ(-))$ defined
by the components
\begin{flalign}
\xymatrix{
\ar[dr]_-{\ast \circ \id~~}\O\circ J_\ast X \ar[rr]^-{\nu_X} && J_\ast\big(\O\circ X\big)\\
&J_\ast\O\circ J_\ast X \ar[ru]_-{~~~({J_{\ast}}_2)_{\O,X}}&
}
\end{flalign}
\end{subequations}
for all $X\in\MM^\CCC$, where $\ast : \O\to J_\ast \O$ denotes the $\ast$-involution on $\O$.
\begin{propo}
Given any $\CCC$-colored $\ast$-operad $(\O,\gamma,\oone,\ast) \in\astOp_\CCC(\MM,J,j)$,
the components given in \eqref{eqn:monadstructuremaps} 
define involutive natural transformations $\gamma : (\O\circ (-), \nu )~ (\O\circ (-), \nu ) \to (\O\circ (-), \nu)$
and $\oone : (\ID_{\MM^\CCC},\id_{J_\ast}) \to (\O\circ (-), \nu )$ for the involutive endofunctor
\eqref{eqn:involutivemonadfromastoperad}. In the terminology of \cite[Definition 7]{Jacobs},
the triple $\big((\O\circ (-), \nu ),\gamma,\oone\big)$ is an {\em involutive monad}
in $(\MM^\CCC,J_\ast,j_\ast)$.
\end{propo}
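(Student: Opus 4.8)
The plan is to observe that, by \cite[Definition 7]{Jacobs}, the triple $\big((\O\circ(-),\nu),\gamma,\oone\big)$ is an involutive monad exactly when two conditions hold: the triple $(\O\circ(-),\gamma,\oone)$ is an ordinary monad on $\MM^\CCC$, and the natural transformations $\gamma$ and $\oone$ are involutive with respect to $\nu$. The first condition requires nothing new: the maps \eqref{eqn:monadstructuremaps} endow $\O\circ(-)$ with a monad structure solely because $(\O,\gamma,\oone)$ is a monoid in $(\SymSeq_\CCC(\MM),\circ,I_\circ)$, exactly as recalled before Definition \ref{def:alg}, and the involution plays no role here. Hence the whole content of the statement is the involutivity of $\gamma$ and $\oone$, and I would spend the proof verifying the two squares of Definition \ref{def:invfunandnat}.

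For the unit $\oone : (\ID_{\MM^\CCC},\id_{J_\ast}) \to (\O\circ(-),\nu)$, the square reads, in components at $X\in\MM^\CCC$, as $\nu_X\,\oone_{J_\ast X} = J_\ast(\oone_X)$. I would unfold $\oone_X$ through the circle-unit isomorphism $I_\circ\circ X\cong X$ together with the operadic unit $\oone\circ\id$, and unfold $\nu_X$ through its definition $({J_\ast}_2)_{\O,X}\,(\ast\circ\id)$ in \eqref{eqn:involutivemonadfromastoperad}. Whiskering the left compatibility square of item (3) in Remark \ref{rem:astOp}, i.e.\ $\ast\,\oone = (J_\ast\oone)\,{J_\ast}_0$, with $(-)\circ X$ and invoking the monoidal-functor coherence of $(J_\ast,{J_\ast}_2,{J_\ast}_0)$ with the left unitor of $\circ$ (Theorem \ref{theo:symseqinvmoncat}), the square collapses to an identity. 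This case is essentially bookkeeping.

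The multiplication is the substantial part. Here the source is the composite involutive endofunctor $(\O\circ(-),\nu)\,(\O\circ(-),\nu)$, whose involutive structure at $X$ is $\nu_{\O\circ X}\,(\O\circ\nu_X)$ by the composition rule of Remark \ref{rem:2cat}(ii); the condition to verify is therefore $\nu_X\,\gamma_{J_\ast X} = J_\ast(\gamma_X)\,\nu_{\O\circ X}\,(\O\circ\nu_X)$. I would rewrite $\gamma_X$ via the associativity isomorphism $(\O\circ\O)\circ X\cong\O\circ(\O\circ X)$ and $\gamma\circ\id$, expand the two occurrences of $\nu$, and reduce the identity to the right compatibility square of item (3), namely $\ast\,\gamma = (J_\ast\gamma)\,({J_\ast}_2)_{\O,\O}\,(\ast\circ\ast)$. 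The main obstacle is exactly this reduction: the two separate copies of ${J_\ast}_2$ appearing in $\O\circ\nu_X$ and in $\nu_{\O\circ X}$ must be matched against the single $({J_\ast}_2)_{\O,\O}$ of the compatibility square, which requires the coherence of $J_\ast$ as a monoidal endofunctor with respect to the associator of the circle product (established in Theorem \ref{theo:symseqinvmoncat}) together with the naturality of ${J_\ast}_2$. I expect this coherence-tracking diagram chase to be the only genuinely nontrivial step.

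Finally, I would point out the conceptual content underlying both squares: by the first interpretation in Remark \ref{rem:astOp}, the diagrams of item (3) say precisely that $\ast : (\O,\gamma,\oone) \to \Mon(J_\ast)(\O,\gamma,\oone)$ is a morphism of operads, and the claim is that such a morphism transports along the operad-to-monad assignment \eqref{eqn:monadstructuremaps} to the asserted involutive structure on $\O\circ(-)$. Read through Proposition \ref{propo:OpastObj}, this is the statement that the monoid in $\SymSeq_\CCC(\astObj(\MM,J,j))$ corresponding to $(\O,\ast)$ yields the ordinary monad underlying the involutive one, a viewpoint that makes both squares manifest and could be used to organize, or even bypass, the explicit chase.
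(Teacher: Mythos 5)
Your proposal is correct and follows exactly the route the paper has in mind: the paper's proof consists only of the remark that the statement is analogous to \cite[Example 3(i)]{Jacobs} and ``may be proven by a slightly lengthy diagram chase,'' and your plan is precisely that chase made explicit --- reducing the two involutivity squares to the compatibility diagrams of Remark \ref{rem:astOp}(3) via naturality of ${J_\ast}_2$ and the unit/associativity coherences of the monoidal endofunctor $J_\ast$. You correctly identify the multiplication square (matching the two copies of ${J_\ast}_2$ against the single $({J_\ast}_2)_{\O,\O}$ through the associator hexagon) as the only nontrivial step, and that reduction does go through.
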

\begin{proof}
This statement is analogous \cite[Example 3~(i)]{Jacobs} and may be proven by a slightly lengthy diagram chase argument.
\end{proof}

The category of algebras $\Alg(\O)$ (cf.\ Definition \ref{def:alg}) 
over (the underlying colored operad of) 
a $\CCC$-colored $\ast$-operad $\O\in\astOp_\CCC(\MM,J,j)$
can be equipped with a canonical involutive structure
\begin{flalign}\label{eqn:AlgOinvolutive}
\big(\Alg(\O),J_\O,j_\O\big)\in\ICat\quad,
\end{flalign} 
see also \cite[Proposition 3]{Jacobs} for a similar construction.
Concretely, the endofunctor $J_\O : \Alg(\O)\to\Alg(\O)$ acts on objects 
$(A,\alpha)\in\Alg(\O)$ as
\begin{flalign}
J_\O\big( A,\alpha \big)~:=~\Big(J_\ast A , 
\xymatrix@C=3.5em{
\O\circ J_\ast A \ar[r]^-{\ast\circ\id} &
J_\ast \O\circ J_\ast A\ar[r]^-{({J_\ast}_2)_{\O,A}} & 
J_\ast\big(\O\circ A\big)\ar[r]^-{J_\ast\alpha} & J_\ast A
}
\Big)
\end{flalign}
and on morphisms as $J_\ast$. The natural transformation
$j_\O : \ID_{\Alg(\O)} \to J_\O^2$ is defined by the components
${j_\O}_{(A,\alpha)} := {j_{\ast}}_A$, for all $(A,\alpha)\in\Alg(\O)$.
This allows us to introduce the concept of $\ast$-algebras
over colored $\ast$-operads.
\begin{defi}\label{def:astAlg}
The category of {\em $\ast$-algebras} over a $\CCC$-colored
$\ast$-operad $\O\in\astOp_\CCC(\MM,J,j)$ is defined
by evaluating the $2$-functor $\astObj : \ICat\to\Cat$ (cf.\ \eqref{eqn:astObj2fun})
on the involutive category of $\O$-algebras \eqref{eqn:AlgOinvolutive}, i.e.\
\begin{flalign}
\astAlg(\O)~:=~\astObj\big(\Alg(\O),J_\O,j_\O\big)\quad.
\end{flalign}
\end{defi}
\begin{rem}\label{rem:astAlgexplicit}
Unpacking this definition, we obtain that 
a $\ast$-algebra over $\O\in\astOp_\CCC(\MM,J,j)$ 
is a triple $(A,\alpha,\ast_A)\in\astAlg(\O)$ 
consisting of a $\CCC$-colored object $A \in \MM^\CCC$ 
and two $\MM^\CCC$-morphisms $\alpha: \O \circ A \to A$ 
and $\ast_A: A \to J_\ast A$, which satisfy the following conditions: 
\begin{itemize}
\item[(1)] $(A,\alpha) \in \Alg(\O)$ is an algebra over the $\CCC$-colored operad $\O$; 

\item[(2)] $(\ast_A: A \to J_\ast A) \in \astObj(\MM^\CCC,J_\ast,j_\ast)$ 
is a $\ast$-object in the involutive category $(\MM^\CCC,J_\ast,j_\ast)$; 

\item[(3)] these two structures are compatible, i.e.\ the diagram
\begin{flalign}
\xymatrix@C=3.5em{
\ar[d]_-{\O\circ \ast_A} \O\circ A \ar[rrr]^-{\alpha} &&& \ar[d]^-{\ast_A} A\\
\O\circ J_\ast A \ar[r]_-{\ast\circ \id}& J_\ast\O\circ J_\ast A \ar[r]_-{({J_\ast}_2)_{\O,A}}& J_\ast (\O\circ A) \ar[r]_-{J_\ast\alpha}& J_\ast A 
}
\end{flalign} 
in $\MM^\CCC$ commutes. 
\end{itemize}
A $\ast$-algebra morphism $\varphi : (A,\alpha,\ast_A)\to (B,\beta,\ast_B)$
is an $\MM^\CCC$-morphism $\varphi : A\to B$ preserving the structure maps
and $\ast$-involutions, i.e.\ $\beta\, (\O\circ\varphi) = \varphi\, \alpha$
and $\ast_B\, \varphi = (J_\ast\varphi)\,\ast_A$.
\end{rem}

Similarly to \eqref{eqn:Alg}, we observe that the 
assignment of the involutive categories of algebras 
to colored $\ast$-operads is functorial 
\begin{flalign}\label{eqn:AlgICatFunctor}
\Alg \,:\, \astOp(\MM,J,j)^\op~\longrightarrow~\ICat
\end{flalign}
with respect to the category  $\astOp(\MM,J,j)$ of colored $\ast$-operads with varying colors
(cf.\ Definition \ref{def:astOpcolor}). Concretely, this functor
assigns to a $\astOp(\MM,J,j)$-morphism $(f,\phi) : (\CC,\O)\to(\DD,\P)$
the involutive functor
\begin{flalign}\label{eqn:Algpullbackinv}
\big( (f,\phi)^\ast, \id_{(f,\phi)^\ast\, J_\P} \big) \,:\, 
\big( \Alg(\P), J_\P, j_\P \big) ~\longrightarrow~ 
\big( \Alg(\O), J_\O, j_\O \big) \quad, 
\end{flalign}
which is given by equipping the pullback functor \eqref{eqn:Algpullback}
with the trivial involutive structure $\id_{(f,\phi)^\ast\, J_\P}: 
(f,\phi)^\ast\, J_\P \to J_\O\, (f,\phi)^\ast = (f,\phi)^\ast\, J_\P$.
(Showing that $J_\O\, (f,\phi)^\ast = (f,\phi)^\ast\, J_\P$ requires a brief check.)
As a consequence of \eqref{eqn:AlgICatFunctor} and ($2$-)functoriality of 
$\astObj: \ICat \to \Cat$ (cf.\ \eqref{eqn:astObj2fun}), we obtain that also the
assignment of the categories of $\ast$-algebras (cf.\ Definition \ref{def:astAlg}) 
to colored $\ast$-operad is functorial
\begin{flalign}\label{eqn:astAlg}
\astAlg \,:\, \astOp(\MM,J,j)^\op ~\longrightarrow ~\Cat \quad.
\end{flalign}
Given any $\astOp(\MM,J,j)$-morphism $(f,\phi): (\CCC,\O) \to (\DDD,\P)$,  
we denote the corresponding functor simply by
\begin{flalign}\label{eqn:astAlgpullback}
(f,\phi)^\ast := \astAlg(f,\phi) \,:\, \astAlg(\P) ~\longrightarrow~ \astAlg(\O) \quad.
\end{flalign}
Concretely, it is given by evaluating the $2$-functor 
$\astObj: \ICat \to \Cat$ given in \eqref{eqn:astObj2fun} 
on the involutive functor \eqref{eqn:Algpullbackinv}. 

\begin{rem}
Recalling Proposition \ref{propo:OpastObj},
there exists an isomorphism
\begin{flalign}
\astOp_\CCC(\MM,J,j) ~\cong ~\Op_\CCC \big( \astObj(\MM,J,j) \big)
\end{flalign}
between the category of colored $\ast$-operads with values in
$(\MM,J,j)$ and the category of ordinary colored operads
with values in $\astObj(\MM,J,j)$. This isomorphism
clearly extends to the categories of colored ($\ast$-)operads
with varying colors. As a consequence, there exists a second option
for assigning categories of $\ast$-algebras to colored $\ast$-operads,
which is given by the lower path in the diagram
\begin{flalign}\label{eqn:diagramastAlg}
\xymatrix{
\ar[rd]_-{\cong~~}\astOp(\MM,J,j)^\op  \ar[rr]^-{\astAlg} &\ar@{=>}[d]^-{\cong}& \Cat\\
&\Op \big( \astObj(\MM,J,j) \big)^\op\ar[ru]_-{~~\Alg}&
}
\end{flalign}
where $\Alg$ denotes the functor given in \eqref{eqn:Alg}.
Similarly to  \cite[Proposition 3]{Jacobs}, one can prove  that
the diagram \eqref{eqn:diagramastAlg} commutes up to a natural isomorphism,
hence the second option for assigning the categories of $\ast$-algebras is 
equivalent to our original definition in \eqref{eqn:astAlg}.
\sk

We would like to emphasize that the main reason
why the diagram in \eqref{eqn:diagramastAlg}
commutes is that the conditions (1-3) in Remark \ref{rem:astAlgexplicit}
admit two equivalent interpretations: 
The first option is to regard $(A,\alpha) \in \Alg(\O)$ 
as an algebra over the $\CCC$-colored operad $\O$
and $\ast_A: (A,\alpha) \to J_\O (A,\alpha)$ 
as an $\Alg(\O)$-morphism. One observes that 
$\big( \ast_A: (A,\alpha) \to J_\O (A,\alpha) \big) 
\in \astObj \big( \Alg(\O), J_\O, j_\O \big)$ is a $\ast$-object 
in the involutive category $\big( \Alg(\O), J_\O, j_\O \big)$, 
which recovers our original Definition \ref{def:astAlg} and hence the upper path
in the diagram \eqref{eqn:diagramastAlg}. 
The second option is to regard 
$(\ast_A: A \to J_\ast A) \in \astObj(\MM,J,j)^\CCC$ 
as a $\CCC$-colored object in $\astObj(\MM,J,j)$ 
and $\alpha: (\ast: \O \to J_\ast \O) \circ (\ast_A: A \to J_\ast A) 
\to (\ast_A: A \to J_\ast A)$ as a $\astObj(\MM,J,j)^\CCC$-morphism.
One observes that this defines an algebra
over $\O$, regarded as an object in $\Op_\CCC(\astObj(\MM,J,j))$,
which recovers the lower path in the diagram \eqref{eqn:diagramastAlg}.
\end{rem}

We conclude this section by noticing that \eqref{eqn:Algpullbackinv} 
equips the right adjoint functor $(f,\phi)^\ast: \Alg(\P) \to \Alg(\O)$ 
of the adjunction \eqref{eqn:changeOp} with an involutive structure. 
Hence, applying Proposition \ref{propo:invadj}, we obtain 
a canonical involutive structure 
$\lambda_{(f,\phi)}: (f,\phi)_!\, J_\O \to J_\P\, (f,\phi)_!$
on the left adjoint functor $(f,\phi)_!: \Alg(\O) \to \Alg(\P)$ 
together with an involutive adjunction 
\begin{flalign}\label{eqn:changeastOp} 
\xymatrix{
\big( (f,\phi)_!, \lambda_{(f,\phi)} \big)\,:\, \big( \Alg(\O), J_\O, j_\O \big) 
~\ar@<0.5ex>[r] & \ar@<0.5ex>[l]~ \big( \Alg(\P), J_\P, j_\P \big) \,:\, 
\big( (f,\phi)^\ast, \id_{(f,\phi)^\ast\, J_\P} \big)
}\quad.
\end{flalign}
Because $2$-functors preserve adjunctions, 
we may apply the $2$-functor $\astObj : \ICat\to\Cat$
to the involutive adjunction \eqref{eqn:changeastOp} 
in order to obtain an adjunction
\begin{flalign}\label{eqn:changeastOpastAlg} 
\xymatrix{
(f,\phi)_!\,:\, \astAlg(\O) ~\ar@<0.5ex>[r] 
& \ar@<0.5ex>[l]~ \astAlg(\P) \,:\, (f,\phi)^\ast
}
\end{flalign}
between the categories of $\ast$-algebras.
Summing up, we have proven
\begin{theo}\label{theo:adjunctionastAlg}
Associated to every $\astOp(\MM,J,j)$-morphism 
$(f,\phi): ( \CCC, \O ) \to ( \DDD, \P)$, there is
an involutive adjunction \eqref{eqn:changeastOp}
between the involutive categories of algebras
and an adjunction \eqref{eqn:changeastOpastAlg} 
between the categories of $\ast$-algebras.
\end{theo}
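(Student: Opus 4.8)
The plan is to assemble the two assertions from ingredients that are already in place, so that the proof reduces to invoking Proposition \ref{propo:invadj} and then pushing the result through the $2$-functor $\astObj$. The two structural inputs are the ordinary change-of-operad adjunction \eqref{eqn:changeOp} and the involutive refinement \eqref{eqn:Algpullbackinv} of its right adjoint.

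First I would recall that, by the adjoint lifting theorem, the pullback functor $(f,\phi)^\ast: \Alg(\P) \to \Alg(\O)$ admits a left adjoint $(f,\phi)_!$, which is precisely the ordinary adjunction \eqref{eqn:changeOp}. Moreover, \eqref{eqn:Algpullbackinv} equips $(f,\phi)^\ast$ with the trivial involutive structure $\id_{(f,\phi)^\ast J_\P}$, using the identity $J_\O\,(f,\phi)^\ast = (f,\phi)^\ast J_\P$ already noted below \eqref{eqn:Algpullbackinv}, so that $\big((f,\phi)^\ast, \id_{(f,\phi)^\ast J_\P}\big): \big(\Alg(\P), J_\P, j_\P\big) \to \big(\Alg(\O), J_\O, j_\O\big)$ is an involutive functor.

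The key step is then to apply Proposition \ref{propo:invadj} with the involutive right adjoint $(R,\rho) = \big((f,\phi)^\ast, \id_{(f,\phi)^\ast J_\P}\big)$ and its left adjoint $L = (f,\phi)_!$. The proposition manufactures a canonical involutive structure $\lambda_{(f,\phi)}$ on $(f,\phi)_!$ and guarantees that the unit and counit of $L \dashv R$ become involutive natural transformations; this yields exactly the involutive adjunction \eqref{eqn:changeastOp}. Essentially all of the content of the theorem is discharged at this point, and nothing further needs to be checked because Proposition \ref{propo:invadj} already does the diagram chase.

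Finally, I would obtain \eqref{eqn:changeastOpastAlg} by applying the $2$-functor $\astObj: \ICat \to \Cat$ of \eqref{eqn:astObj2fun} to \eqref{eqn:changeastOp}. Since any $2$-functor preserves adjunctions (it carries the unit, counit and triangle identities in $\ICat$ to those in $\Cat$), and since by Definition \ref{def:astAlg} one has $\astAlg(\O) = \astObj\big(\Alg(\O), J_\O, j_\O\big)$ and $\astAlg(\P) = \astObj\big(\Alg(\P), J_\P, j_\P\big)$, the image is precisely the desired adjunction between categories of $\ast$-algebras. The main obstacle is not a computation but verifying the hypothesis of Proposition \ref{propo:invadj}, namely that the right adjoint is involutive; this has already been supplied by \eqref{eqn:Algpullbackinv}, so the remaining work is only the bookkeeping of these identifications.
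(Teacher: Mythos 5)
Your proposal is correct and follows exactly the paper's own argument: equip the right adjoint $(f,\phi)^\ast$ with the trivial involutive structure via \eqref{eqn:Algpullbackinv}, invoke Proposition \ref{propo:invadj} to obtain the involutive adjunction \eqref{eqn:changeastOp}, and then apply the $2$-functor $\astObj$ (which preserves adjunctions) to get \eqref{eqn:changeastOpastAlg}. No gaps; this is precisely how the theorem is established in the text preceding its statement.
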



\section{\label{sec:QFTs}Algebraic quantum field theory $\ast$-operads}
As an application of the concepts and techniques developed in this paper,
we study the family of colored operads arising in algebraic quantum field theory
\cite{BeniniSchenkelWoike} within the setting of involutive category theory. 
The main motivation for promoting these colored operads to colored $\ast$-operads
is due to quantum physics: A quantum mechanical system is described not only by an associative 
and unital algebra over $\bbC$, but rather by an associative and unital $\ast$-algebra $A$ over $\bbC$.  
Here the relevant type of $\ast$-algebras is the reversing one, i.e.\ $(a\,b)^\ast = b^\ast\,a^\ast$.
The additional structure given by the complex anti-linear $\ast$-involution is essential 
for quantum physics: It enters the GNS construction that is crucial to 
recover the usual probabilistic interpretation of quantum theory in terms of Hilbert spaces.
\sk

Throughout this section we let $(\MM,J,j)$ be any cocomplete involutive closed symmetric monoidal category.
In traditional quantum field theory, one  would choose the example given by 
complex vector spaces $(\Vec_\bbC,\ovr{(-)},\id_{\ID_{\Vec_\bbC}})$, see Examples 
\ref{ex:vec}, \ref{ex:vec2} and \ref{ex:vec3} for details. More modern approaches to
quantum {\em gauge} theories, however, have lead to the concept of \emph{homotopical quantum field theory} 
and crucially rely on using different and richer target categories,
such as chain complexes and other monoidal model categories, see e.g.\
 \cite{BeniniSchenkelSzabo,BeniniSchenkel,BeniniSchenkelWoike,BeniniSchenkelWoikehomotopy,YauQFT}
for algebraic quantum field theory and also \cite{CostelloGwilliam} for similar developments in factorization algebras.
Hence, it is justified to present our constructions  with this high level of generality.
\sk

Let us provide a very brief review of the algebraic quantum field theory operads
constructed in \cite{BeniniSchenkelWoike}. We refer to this paper for more 
details and the physical motivations. 
\begin{defi}
An {\em orthogonality relation} on a small category $\CC$ is a subset
${\perp} \subseteq \mathrm{Mor}\,\CC{}_\mathrm{t} \times_{\mathrm{t}} \mathrm{Mor}\,\CC$
of the set of pairs of $\CC$-morphisms with coinciding target
that is symmetric, i.e.\ $(f_1,f_2)\in{\perp}$ implies $(f_2,f_1)\in{\perp}$,
and stable under post- and pre-composition, i.e.\ $(f_1,f_2)\in{\perp}$
implies $(g f_1,g f_2)\in {\perp}$ and $(f_1 h_1,f_2 h_2)\in{\perp}$ for all
composable $\CC$-morphisms $g$, $h_1$ and $h_2$. 
We call elements $(f_1,f_2)\in \perp$ {\em orthogonal pairs} and also write $f_1\perp f_2$.
A pair $(\CC,\perp)$ consisting of a small category $\CC$ and an orthogonality relation $\perp$
on $\CC$ is called an {\em orthogonal category}.
\end{defi}

\begin{ex}\label{ex:terminalOCAT}
On the terminal category $\CC=\{\bullet\}$ there exist precisely two different orthogonality relations, 
namely $\perp = \emptyset$ and $\perp=\{(\id_\bullet,\id_\bullet)\}$.
The corresponding orthogonal categories
$(\{\bullet\},\emptyset)$ and $(\{\ast\},\{(\id_\bullet,\id_\bullet)\})$ 
will be used below to illustrate our constructions for the simplest possible examples.
\end{ex}

\begin{ex}\label{ex:LocOCAT}
The following is the prime example of an orthogonal category, see 
e.g.\ \cite{Brunetti,BeniniSchenkelWoike} for the details. Let $\Loc$
be the category of globally hyperbolic Lorentzian manifolds (of a fixed dimension $\geq 2$) with morphisms given
by causally convex and open isometric embeddings.
Two morphisms $f_1 : M_1\to M$ and $f_2 : M_2\to M$ to a common Lorentzian manifold $M$
are declared to be orthogonal, $f_1\perp f_2$, if and only if their images are causally disjoint subsets
of $M$, i.e.\ there exists no causal curve connecting $f_1(M_1)$ and $f_2(M_2)$.
The resulting orthogonal category $(\Loc,\perp)$ describes the physical concept of 
{\em spacetimes} (in the sense of Einstein's general relativity)
and their causal relations. It provides the foundation for formulating
locally covariant algebraic quantum field theory \cite{Brunetti}.
\sk

Another related example is obtained by the following construction:
Choosing any globally hyperbolic Lorentzian manifold $M\in\Loc$, consider the over category
$\Loc/M$ together with the forgetful functor $U : \Loc/M\to\Loc$.
The orthogonality relation $\perp$ on $\Loc$ pulls back under $U$ to an orthogonality relation
$\perp_M$ on $\Loc/M$. Explicitly, two morphisms $g_1$ and $g_2$ to a common target in $\Loc/M$
are orthogonal with respect to $\perp_M$ if and only if $U(g_1)\perp U(g_2)$ in $(\Loc,\perp)$. The resulting
orthogonal category $(\Loc/M,\perp_M)$ describes causally convex open subsets
of the fixed globally hyperbolic Lorentzian manifold $M$ (interpreted physically as the universe) 
and their causal relations.
It provides the foundation for formulating Haag-Kastler type algebraic quantum field theories \cite{HaagKastler}.
\end{ex}

Let $(\CC,\perp)$ be an orthogonal category and denote by $\CC_0$ the set of objects of $\CC$.
To define the algebraic quantum field theory operad associated to $(\CC,\perp)$
it is convenient to introduce the following notations:
Given $\und{c}=(c_1,\dots,c_n)\in \Sigma_{\CC_0}$ and $t\in\CC$,
we denote by $\CC(\und{c},t) := \prod_{i=1}^n \CC(c_i,t)$ the product of $\Hom$-sets.
Its elements will be denoted by symbols like $\und{f} = (f_1,\dots,f_n)\in \CC(\und{c},t)$.
The following definition is due to \cite{BeniniSchenkelWoike}.
\begin{defi}\label{defi:AQFToperad}
Let $(\CC,\perp)$ be an orthogonal category.
The {\em algebraic quantum field theory operad} of type $(\CC,\perp)$ 
with values in $\MM$ is the $\CC_0$-colored operad $\O_{(\CC,\perp)}\in \Op_{\CC_0}(\MM)$ 
defined as follows:
\begin{itemize}
\item[(a)] For any $(\und{c},t)\in \Sigma_{\CC_0}\times\CC_0$, we set
\begin{flalign}
\O_{(\CC,\perp)}\big(\substack{t\\\und{c}}\big) \,:=\, \big(\Sigma_{\vert\und{c}\vert} \times \CC(\und{c},t) \big)\big/ {\sim_\perp}  \otimes I  \in\MM \quad,
\end{flalign}
where the equivalence relation is as follows: 
$(\sigma,\und{f}) \sim_{\perp} (\sigma^\prime,\und{f}^\prime)$ if and only if
(1)~$\und{f} = \und{f}^\prime$ and (2)~the right permutation
$\sigma{\sigma^{\prime}}^{-1} : \und{f}\sigma^{-1} \to \und{f}{\sigma^{\prime}}^{-1}$
is generated by transpositions of adjacent orthogonal pairs.

\item[(b)] For any $\Sigma_{\CC_0}\times \CC_0$-morphism $\sigma^\prime : (\und{c},t)\to
(\und{c}\sigma^\prime,t)$, we set
\begin{flalign}
\O_{(\CC,\perp)}(\sigma^\prime)\, :\, \O_{(\CC,\perp)}\big(\substack{t\\\und{c}}\big) ~\longrightarrow~\O_{(\CC,\perp)}\big(\substack{t\\\und{c}\sigma^\prime}\big)
\end{flalign}
to be the $\MM$-morphism induced by the map of sets 
$[\sigma,\und{f}]\mapsto [\sigma\sigma^\prime, \und{f}\sigma^\prime]$ via functoriality 
of the $\Set$-tensoring.

\item[(c)] The operadic composition is determined by the $\MM$-morphisms 
\begin{subequations}
\begin{flalign}
\gamma \,:\, \O_{(\CC,\perp)}\big(\substack{t\\\und{a}}\big)\otimes\Motimes_{i=1}^m 
\O_{(\CC,\perp)}\big(\substack{a_i\\\und{b}_i}\big) ~\longrightarrow~\O_{(\CC,\perp)}\big(\substack{t\\\und{b}_1\otimes\cdots\otimes \und{b}_m}\big)
\end{flalign}
induced by the maps of sets
\begin{flalign}
[\sigma,\und{f}] \otimes\Motimes_{i=1}^m [\sigma_i,\und{g}_i]~\longmapsto~
\big[ \sigma(\sigma_1,\dots,\sigma_m), \und{f}(\und{g}_1,\dots,\und{g}_m)\big]
\end{flalign}
\end{subequations}
via functoriality of the $\Set$-tensoring. Here $\sigma(\sigma_1,\dots,\sigma_m) = \sigma\langle \vert\und{b}_{\sigma^{-1}(1)}\vert,\dots,\vert\und{b}_{\sigma^{-1}(m)}\vert\rangle~(\sigma_1\oplus\cdots\oplus \sigma_{m})$
denotes the group multiplication in $\Sigma_{\vert \und{b}_1\vert + \cdots + \vert \und{b}_m\vert}$ 
of the corresponding block permutation and block sum permutation,
and $\und{f}(\und{g}_1,\dots,\und{g}_m) = (f_1 \, g_{11},\dots,f_m \, g_{m\vert \und{b}_m\vert})$ is given by 
composition of $\CC$-morphisms.

\item[(d)] The operadic unit is determined by the $\MM$-morphisms 
\begin{flalign}
\oone\,:\, I~\longrightarrow~\O_{(\CC,\perp)}\big(\substack{t\\t}\big)
\end{flalign}
induced by the maps of sets
$\bullet \mapsto (e,\id_t)$, where $e\in \Sigma_1$ is the group unit,
via functoriality of the $\Set$-tensoring.
\end{itemize}
\end{defi}

The following results are proven in \cite{BeniniSchenkelWoike}.
\begin{theo}\label{theo:AlgAQFTfunctors}
For any orthogonal category $(\CC,\perp)$, Definition \ref{defi:AQFToperad} defines
a $\CC_0$-colored operad $\O_{(\CC,\perp)}\in\Op_{\CC_0}(\MM)$. Furthermore, 
there exists an isomorphism
\begin{flalign}
\Alg(\O_{(\CC,\perp)})~\cong~\Mon(\MM)^{(\CC,\perp)}
\end{flalign}
between the category of $\O_{(\CC,\perp)}$-algebras and the category of $\perp$-commutative
functors from $\CC$ to $\Mon(\MM)$. Concretely, the latter is the full subcategory
of the functor category $\Mon(\MM)^\CC$ consisting of all functors $\mathfrak{A} : \CC\to \Mon(\MM)$
for which the diagrams
\begin{flalign}\label{eqn:perpcommutativity}
\xymatrix{
\ar[d]_-{\mathfrak{A}(f_1)\otimes \mathfrak{A}(f_2)}\mathfrak{A}(c_1)\otimes \mathfrak{A}(c_2) \ar[rr]^-{\mathfrak{A}(f_1)\otimes \mathfrak{A}(f_2)} && \mathfrak{A}(c)\otimes \mathfrak{A}(c)\ar[d]^-{\mu_c^\op}\\
 \mathfrak{A}(c)\otimes \mathfrak{A}(c) \ar[rr]_-{\mu_c} &&\mathfrak{A}(c)
}
\end{flalign}
in $\MM$ commute, for all orthogonal pairs $f_1\perp f_2$. Here $\mu_c$ (respectively $\mu_c^\op$) denotes 
the (opposite) multiplication in the monoid $\mathfrak{A}(c)$.
\end{theo}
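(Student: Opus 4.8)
The plan is to establish the two assertions separately: first verify the operad axioms, then construct the isomorphism of categories by reading off the algebra structure maps on profiles of small length.

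For the first claim I would check that $\O_{(\CC,\perp)}$ satisfies the axioms of a monoid in $\SymSeq_{\CC_0}(\MM)$. The symmetric-sequence functoriality of part (b) descends to the quotient by $\sim_\perp$ without difficulty, since right multiplication by $\sigma^\prime$ commutes with the generating adjacent orthogonal transpositions. The delicate point is the well-definedness of the operadic composition $\gamma$ in part (c): one must verify that the assignment $[\sigma,\und{f}]\otimes\Motimes_{i=1}^m[\sigma_i,\und{g}_i]\mapsto[\sigma(\sigma_1,\dots,\sigma_m),\und{f}(\und{g}_1,\dots,\und{g}_m)]$ is independent of the chosen representatives. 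This reduces to the observation that replacing a representative by an adjacent-orthogonal-transposition-equivalent one transforms the composite by an adjacent orthogonal transposition as well, which is precisely guaranteed by the stability of $\perp$ under pre- and post-composition. Associativity and unitality of $\gamma$ then follow from the corresponding block-permutation identities in the symmetric groups together with functoriality of composition in $\CC$, all checked after passing to $\sim_\perp$-classes.

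For the isomorphism of categories I would unpack an $\O_{(\CC,\perp)}$-algebra $(A,\alpha)$ in the sense of Definition \ref{def:alg} and extract a functor. Writing $A_t$ for the components of the underlying colored object $A\in\MM^{\CC_0}$, the unit $\oone$ together with the binary operation indexed by the class $[e,(\id_t,\id_t)]$ equip each $A_t$ with a multiplication $\mu_t: A_t\otimes A_t\to A_t$ and a unit $\eta_t: I\to A_t$; the monad-algebra axioms $\alpha\,(\O\circ\alpha)=\alpha\,\gamma_A$ and $\alpha\,\oone_A=\id_A$ force $(A_t,\mu_t,\eta_t)$ to be a monoid in $\MM$. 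The unary operations, indexed by classes $[e,(f)]$ for morphisms $f:c\to t$ in $\CC$, define morphisms $\mathfrak{A}(f): A_c\to A_t$, and operad associativity and unitality yield $\mathfrak{A}(f^\prime f)=\mathfrak{A}(f^\prime)\,\mathfrak{A}(f)$ and $\mathfrak{A}(\id_t)=\id_{A_t}$, so that $\mathfrak{A}:\CC\to\MM$ is a functor with $\mathfrak{A}(c)=A_c$. Composing the unary action with the binary multiplication shows that each $\mathfrak{A}(f)$ is a monoid morphism, hence $\mathfrak{A}$ lifts to a functor $\CC\to\Mon(\MM)$.

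It then remains to match the $\perp$-commutativity condition \eqref{eqn:perpcommutativity} with the content of $\sim_\perp$. For an orthogonal pair $f_1\perp f_2$ with common target $c$, the generators $(e,(f_1,f_2))$ and $(\tau,(f_1,f_2))$, with $\tau$ the transposition, are $\sim_\perp$-equivalent precisely because $\tau$ swaps the adjacent orthogonal pair $(f_1,f_2)$; evaluating the algebra structure on the resulting identity $[e,(f_1,f_2)]=[\tau,(f_1,f_2)]$ produces exactly the equality $\mu_c\,(\mathfrak{A}(f_1)\otimes\mathfrak{A}(f_2))=\mu_c^\op\,(\mathfrak{A}(f_1)\otimes\mathfrak{A}(f_2))$. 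Conversely, a $\perp$-commutative functor $\mathfrak{A}$ determines a colored object $A_t:=\mathfrak{A}(t)$ with structure map defined on a class $[\sigma,\und{f}]$ by permuting the tensor factors according to $\sigma$, applying $\Motimes_{i}\mathfrak{A}(f_i)$, and iterating the multiplications; $\perp$-commutativity is exactly what makes this well defined on $\sim_\perp$-classes, while the operad axioms of the first part translate into the algebra axioms. A routine check shows the two assignments are mutually inverse and compatible with morphisms, giving the claimed isomorphism of categories. The principal obstacle throughout is the combinatorial bookkeeping of the block permutations in the definition of $\gamma$ and the verification that a general multi-ary operation is recovered by coherently iterating the binary multiplication and the unary $\CC$-action; this is where matching $\sim_\perp$ to $\perp$-commutativity requires the most care.
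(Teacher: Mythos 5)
The paper itself gives no proof of this theorem, deferring entirely to the reference \cite{BeniniSchenkelWoike}; your outline is correct and follows essentially the same route as the argument given there. Namely: well-definedness of the operadic composition on $\sim_\perp$-classes via stability of $\perp$ under pre- and post-composition, and the identification of algebras with $\perp$-commutative functors by treating the unary operations $[e,(f)]$ and the multiplications $[e,(\id_t,\dots,\id_t)]$ as generators, with the single relation $[e,(f_1,f_2)]=[\tau,(f_1,f_2)]$ for $f_1\perp f_2$ accounting exactly for the diagram \eqref{eqn:perpcommutativity}.
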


\begin{ex}\label{ex:MonandCMon}
Recalling the orthogonal categories from Example \ref{ex:terminalOCAT},
one easily observes that the family of colored operads in Definition \ref{defi:AQFToperad}
includes the associative operad and the commutative operad as very special cases.
Concretely, $\O_{(\{\bullet\},\emptyset)} = \mathsf{As}$ is the associative operad
and hence $\Alg(\O_{(\{\bullet\},\emptyset)})\cong \Mon(\MM)$ is the category of monoids in $\MM$. 
Similarly, $\O_{(\{\bullet\},\{(\id_\bullet,\id_\bullet)\})} = \mathsf{Com}$ is the commutative operad
and hence $\Alg(\O_{(\{\bullet\},\{(\id_\bullet,\id_\bullet)\})})\cong \CMon(\MM) $ is the category
of commutative monoids in $\MM$. 
\end{ex}

\begin{ex}\label{ex:AQFTs}
More interestingly, the colored operad $\O_{(\Loc,\perp)}$ associated to the orthogonal
category $(\Loc,\perp)$ from Example \ref{ex:LocOCAT} describes locally covariant algebraic
quantum field theories in the sense of \cite{Brunetti}, i.e.\ $\Alg(\O_{(\Loc,\perp)})\cong \mathbf{QFT}(\Loc)$
is the category of such theories. The $\perp$-commutativity
property in \eqref{eqn:perpcommutativity} formalizes the Einstein causality axiom,
which states that observables localized in causally disjoint subsets commute with each other.
The colored operad $\O_{(\Loc/M,\perp_M)}$ associated to the over category
describes Haag-Kastler type algebraic quantum field theories \cite{HaagKastler}
on the fixed Lorentzian manifold $M\in\Loc$, i.e.\ $\Alg(\O_{(\Loc/M,\perp_M)})\cong \mathbf{QFT}(M)$
is the category of such theories. By Theorem \ref{theo:AlgAQFTfunctors},
these are characterized as pre-cosheaves of monoids on $M$ satisfying Einstein causality.
\end{ex}

We will now endow $\O_{(\CC,\perp)}\in \Op_{\CC_0}(\MM)$
with the structure of a colored $\ast$-operad. According to Remark \ref{rem:astOp},
this amounts to equipping the symmetric sequence underlying $\O_{(\CC,\perp)}$
with the structure of a $\ast$-object in the involutive monoidal category $(\SymSeq_{\CC_0}(\MM),J_\ast,j_\ast)$
that is compatible with the operadic compositions and units.
Let us define a $\SymSeq_{\CC_0}(\MM)$-morphism $\ast: \O_{(\CC,\perp)} \to J_\ast \O_{(\CC,\perp)}$
by setting, for all $(\und{c},t)\in\Sigma_{\CC_0}\times\CC_0$,
\begin{flalign}\label{eqn:AQFTast}
\xymatrix{
\ar@{=}[d]\O_{(\CC,\perp)}\big(\substack{t\\\und{c}}\big) \ar[rr]^-{\ast} && J \O_{(\CC,\perp)}\big(\substack{t\\\und{c}}\big)\ar[d]^-{\cong}\\
\big(\Sigma_{\vert\und{c}\vert} \times \CC(\und{c},t)\big)\big/ {\sim_\perp}  \otimes I \ar[rr]_-{\rho_{\vert\und{c}\vert} \otimes J_0}&&
\big(\Sigma_{\vert\und{c}\vert} \times \CC(\und{c},t)\big)\big/ {\sim_\perp}  \otimes JI 
}
\end{flalign}
to be the $\MM$-morphism induced by the map of sets $\rho_{\vert\und{c}\vert} : 
[\sigma,\und{f}]\mapsto [\rho_{\vert\und{c}\vert}\sigma,\und{f}]$, where $\rho_{\vert\und{c}\vert}\in\Sigma_{\vert\und{c}\vert}$ 
is the order-reversal permutation from Example \ref{ex:Cprofiles}, and the $\MM$-morphism $J_0 : I\to JI$. 
(For the right vertical arrow recall that $J$ is self-adjoint, hence it preserves the $\Set$-tensoring.)
Evidently, \eqref{eqn:AQFTast} is equivariant with respect to the action of permutations
given in Definition \ref{defi:AQFToperad}~(b), hence it defines a $\SymSeq_{\CC_0}(\MM)$-morphism.
It is, moreover, straightforward to verify that $(\ast: \O_{(\CC,\perp)} \to 
J_\ast \O_{(\CC,\perp)})\in\astObj(\SymSeq_{\CC_0}(\MM),J_\ast,j_\ast)$ is a $\ast$-object
by using that $\rho_{\vert\und{c}\vert}^2 =e$ is the identity permutation 
and that $j : \ID_\MM \to J^2$ is by hypothesis a monoidal natural transformation.
\begin{propo}\label{propo:AQFTastOp}
Endowing the colored operad $(\O_{(\CC,\perp)},\gamma,\oone)\in \Op_{\CC_0}(\MM)$
from Definition \ref{defi:AQFToperad} with the $\ast$-involution $\ast: \O_{(\CC,\perp)} \to J_\ast \O_{(\CC,\perp)}$
defined in \eqref{eqn:AQFTast} yields a colored $\ast$-operad
\begin{flalign}
\big(\O_{(\CC,\perp)},\gamma,\oone,\ast\big)\in\astOp_{\CC_0}(\MM,J,j)\quad.
\end{flalign}
\end{propo}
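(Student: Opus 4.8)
The plan is to check the three defining conditions of a colored $\ast$-operad spelled out in Remark~\ref{rem:astOp}. Condition~(1), that $(\O_{(\CC,\perp)},\gamma,\oone)$ is a monoid in $(\SymSeq_{\CC_0}(\MM),\circ,I_\circ)$, is exactly Theorem~\ref{theo:AlgAQFTfunctors}, and condition~(2), that $\ast:\O_{(\CC,\perp)}\to J_\ast\O_{(\CC,\perp)}$ is a $\ast$-object, has already been recorded in the paragraph preceding the statement (it uses $\rho_{\vert\und{c}\vert}^2=e$ and that $j$ is monoidal). Hence the only remaining work is condition~(3): the compatibility of the $\ast$-involution~\eqref{eqn:AQFTast} with the operadic unit $\oone$ and composition $\gamma$, i.e.\ the commutativity of the two squares in part~(3) of Remark~\ref{rem:astOp}.

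First I would dispose of the unit square. The operadic unit $\oone$ is supported on the arity-one components $\O_{(\CC,\perp)}\big(\substack{t\\t}\big)$, and the order-reversal permutation on a length-one profile is trivial, $\rho_1=e$. Thus $\ast$ acts as the identity on the permutation datum $(e,\id_t)$ of the unit, while the remaining $I$-factor is taken care of by $J_0:I\to JI$, which matches ${J_\ast}_0$ by the definitions~\eqref{eqn:symseqJ0} and~\eqref{eqn:AQFTast}. The unit square therefore commutes on the nose.

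The real content is the multiplication square. Every morphism involved — the operadic composition $\gamma$, the involution $\ast$, and the coherence map ${J_\ast}_2$ built from $J_2$ — is induced via functoriality of the $\Set$-tensoring from an underlying map of sets, with the $I$-factors handled coherently by the monoidal structure $(J_2,J_0)$ of the strong monoidal functor $J$ (cf.\ Lemma~\ref{lem:Jstrong}). Tracing an element $[\sigma,\und{f}]\otimes\Motimes_{i=1}^m[\sigma_i,\und{g}_i]$ around both paths of the square, and observing that the $\CC$-morphism datum $\und{f}(\und{g}_1,\dots,\und{g}_m)$ is left untouched by $\ast$ along either route, the commutativity reduces to the single identity of permutations
\[
\rho_{N}\,\sigma(\sigma_1,\dots,\sigma_m)
\,=\,(\rho_m\,\sigma)\big(\rho_{\vert\und{b}_1\vert}\,\sigma_1,\dots,\rho_{\vert\und{b}_m\vert}\,\sigma_m\big)
\]
in $\Sigma_N$, where $N=\vert\und{b}_1\vert+\cdots+\vert\und{b}_m\vert$ and the composition notation is that of Definition~\ref{defi:AQFToperad}(c).

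I expect this permutation identity to be the main — indeed essentially the only nontrivial — obstacle. I would prove it by the following block-reversal principle: reversing the order of $N$ elements partitioned into $m$ consecutive blocks of sizes $\vert\und{b}_1\vert,\dots,\vert\und{b}_m\vert$ coincides with first reversing the order of the blocks and then reversing the elements within each block. In the notation of operadic composition this is precisely the base case $\rho_N=\rho_m(\rho_{\vert\und{b}_1\vert},\dots,\rho_{\vert\und{b}_m\vert})$, from which the displayed identity for general $\sigma$ and $\sigma_i$ follows by the inner and outer equivariance axioms of operadic composition (left-multiplying the outer permutation by $\rho_m$ and each inner permutation by the corresponding $\rho_{\vert\und{b}_i\vert}$). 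This combinatorial fact is the operadic counterpart of the symmetric monoidal section $\rho$ of Corollary~\ref{cor:SigmaCCCtrivial}. Once it is in place, all three conditions of Remark~\ref{rem:astOp} hold and $(\O_{(\CC,\perp)},\gamma,\oone,\ast)\in\astOp_{\CC_0}(\MM,J,j)$ as claimed.
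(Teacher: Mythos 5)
Your proposal is correct and follows the same route as the paper: the paper's proof simply notes that conditions (1) and (2) of Remark \ref{rem:astOp} are already in place and that the compatibility conditions in (3) follow from ``a straightforward calculation using standard permutation group properties'' --- precisely the block-reversal identity $\rho_N^{}=\rho_m^{}(\rho_{\vert\und{b}_1\vert},\dots,\rho_{\vert\und{b}_m\vert})$ and its equivariant extension that you spell out. Your write-up just makes explicit the combinatorial content the paper leaves implicit.
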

\begin{proof}
It remains to check the compatibility conditions in Remark \ref{rem:astOp}~(3).
This is a straightforward calculation using standard permutation group properties.
\end{proof}

Let us now study the $\ast$-algebras over the colored $\ast$-operad
$\O_{(\CC,\perp)}\in\astOp_{\CC_0}(\MM,J,j)$ defined in Proposition \ref{propo:AQFTastOp}.
Using the explicit description explained in Remark \ref{rem:astAlgexplicit},
these are triples $(A,\alpha,\ast_A)$ consisting of an algebra $(A,\alpha)$
over $\O_{(\CC,\perp)}$ together with a compatible $\ast$-involution
$\ast_A : A \to J_\ast A$. Using Theorem \ref{theo:AlgAQFTfunctors}
to identify $(A,\alpha)$ with a $\perp$-commutative
functor $\mathfrak{A} : \CC\to \Mon(\MM)$, the
$\ast$-involution $\ast_A : A \to J_\ast A$ is identified with
a family of $\MM$-morphisms
\begin{flalign}
\ast_c \,:\, \mathfrak{A}(c)~\longrightarrow J \mathfrak{A}(c)\quad,
\end{flalign}
for all $c\in\CC$. As a consequence of Remark \ref{rem:astAlgexplicit}~(3), 
such family has to satisfy the following basic conditions:
\begin{itemize}
\item[(1)] {\em Compatibility with monoid structure:} For all $c\in \CC$, 
\begin{subequations}\label{eqn:reversingastMonoid}
\begin{flalign}\label{eqn:reversingastMonoid1}
\xymatrix@C=3.5em{
\ar[d]_-{\mu_c}\mathfrak{A}(c) \otimes \mathfrak{A}(c) \ar[r]^-{\ast_c\otimes\ast_c} & J\mathfrak{A}(c)\otimes J\mathfrak{A}(c) \ar[r]^-{{J_2}_{\mathfrak{A}(c),\mathfrak{A}(c)}} & J\big(\mathfrak{A}(c)\otimes \mathfrak{A}(c)\big) \ar[d]^-{J\mu^\op_c}\\
\mathfrak{A}(c) \ar[rr]_-{\ast_c}&& J\mathfrak{A}(c)
}
\end{flalign}
where $\mu_c$ (respectively $\mu_c^\op$) is the (opposite) multiplication on $\mathfrak{A}(c)\in\Mon(\MM)$,
and
\begin{flalign}
\xymatrix@C=3.5em{
\ar[d]_-{\eta_c} I \ar[r]^-{J_0} & J I \ar[d]^-{J \eta_c}\\
\mathfrak{A}(c) \ar[r]_-{\ast_c} & J\mathfrak{A}(c)
}
\end{flalign}
\end{subequations}
where $\eta_c$ is the unit on $\mathfrak{A}(c)\in\Mon(\MM)$.

\item[(2)] {\em Compatibility with functor structure:} For all $\CC$-morphisms $f : c\to c^\prime$,
\begin{flalign}\label{eqn:functorialityastMonoid}
\xymatrix@C=3.5em{
\ar[d]_-{\mathfrak{A}(f)} \mathfrak{A}(c) \ar[r]^-{\ast_c} & J\mathfrak{A}(c)\ar[d]^-{J\mathfrak{A}(f)}\\
\mathfrak{A}(c^\prime) \ar[r]_-{\ast_{c^\prime}} & J\mathfrak{A}(c^\prime)
}
\end{flalign}
\end{itemize}

\begin{ex}\label{ex:reversal}
To illustrate the behavior of these $\ast$-involutions, consider the orthogonal category
$(\{\bullet\} ,\emptyset)$ from Examples \ref{ex:terminalOCAT} and \ref{ex:MonandCMon}.
Then $\O_{(\{\bullet\},\emptyset)} = \mathsf{As}$ is the associative operad and Proposition \ref{propo:AQFTastOp}
defines a $\ast$-operad structure on it. For later convenience,
let us denote the corresponding category of $\ast$-algebras by
\begin{flalign}\label{eqn:astMonrev}
\astMon_{\mathrm{rev}}(\MM,J,j)~:=~\astAlg(\O_{(\{\bullet\},\emptyset)})\quad.
\end{flalign}
Using our concrete description from above, an object in this category
is a quadruple $(A,\mu,\eta,\ast)$ consisting of a monoid $(A,\mu,\eta)\in \Mon(\MM)$
together with a $\ast$-involution $\ast : A\to J A$ satisfying the compatibility
conditions in \eqref{eqn:reversingastMonoid}. (The conditions in \eqref{eqn:functorialityastMonoid} 
are vacuous because we consider the discrete category $\{\bullet\}$ in this example.)
Comparing these structures to $\ast$-monoids, cf.\ Remark \ref{rem:astmonoids},
we observe that they are very similar, up to the appearance of
the opposite multiplication in \eqref{eqn:reversingastMonoid}. 
This order-reversal of the multiplication under $\ast$-involution, which results from
our $\ast$-operad structure \eqref{eqn:AQFTast},
motivates our notation $\astMon_{\mathrm{rev}}(\MM,J,j)$.
\sk

As a very concrete example, and referring back to Example \ref{ex:Vecexplicitmonoids}, 
let us consider the involutive symmetric monoidal category 
$(\Vec_\bbC,\ovr{(-)},\id_{\ID_{\Vec_\bbC}})$ 
from Example \ref{ex:vec3}. In this case \eqref{eqn:astMonrev}
describes the category of order-reversing associative
and unital $\ast$-algebras over $\bbC$, i.e.\ $(a\,b)^\ast = b^\ast\,a^\ast$,
which is of major relevance for (traditional) quantum physics.
\end{ex}

\begin{rem}\label{rem:revMon}
We would like to mention that \eqref{eqn:AQFTast} is not the only
possible $\ast$-involution on the colored operad $\O_{(\CC,\perp)}$. For example,
we could replace the order-reversal permutations $\rho_{\vert\und{c}\vert}$ in
\eqref{eqn:AQFTast} by the identity permutations $e$. This would define
another colored $\ast$-operad structure on $\O_{(\CC,\perp)}$ that differs
from our choice above. The $\ast$-algebras for this alternative choice 
do not describe order-reversing $\ast$-involutions. In particular, 
$\ast$-algebras over $\O_{(\{\bullet\},\emptyset)}=\mathsf{As}$
for this choice of $\ast$-involution are non-reversing $\ast$-monoids as in 
Remark \ref{rem:astmonoids}. Hence, our general framework
for (colored) $\ast$-operads is sufficiently flexible to capture both
reversing and non-reversing $\ast$-involutions on monoids,
which correspond to different choices of $\ast$-operad structures on 
the same underlying operad $\O_{(\{\bullet\},\emptyset)}=\mathsf{As}$.
\end{rem}

In general, we have the following explicit characterization of $\ast$-algebras
over the colored $\ast$-operad
$\O_{(\CC,\perp)}\in\astOp_{\CC_0}(\MM,J,j)$ defined in Proposition \ref{propo:AQFTastOp}.
\begin{propo}\label{propo:astAQFT}
For any orthogonal category $(\CC,\perp)$, there exists an isomorphism
\begin{flalign}
\astAlg(\O_{(\CC,\perp)}) ~\cong~ \astMon_{\mathrm{rev}}(\MM,J,j)^{(\CC,\perp)}
\end{flalign}
between the category of $\ast$-algebras over $\O_{(\CC,\perp)}$ and the
category of $\perp$-commutative functors from $\CC$ to
the category of order-reversing $\ast$-monoids in $(\MM,J,j)$, cf.\ Example \ref{ex:reversal}.
\end{propo}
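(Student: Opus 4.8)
The plan is to build directly on Theorem~\ref{theo:AlgAQFTfunctors}, which supplies a strict isomorphism $\Alg(\O_{(\CC,\perp)})\cong\Mon(\MM)^{(\CC,\perp)}$, and on Definition~\ref{def:astAlg}, according to which $\astAlg(\O_{(\CC,\perp)})=\astObj(\Alg(\O_{(\CC,\perp)}),J_\O,j_\O)$. Since the $2$-functor $\astObj$ sends isomorphic involutive categories to isomorphic categories, it suffices to transport the involutive structure $(J_\O,j_\O)$ of \eqref{eqn:AlgOinvolutive} along the isomorphism of Theorem~\ref{theo:AlgAQFTfunctors} to an involutive structure on $\Mon(\MM)^{(\CC,\perp)}$ and then to identify its category of $\ast$-objects. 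First I would make the transported endofunctor explicit: unwinding the definition of $J_\O$ through the isomorphism of Theorem~\ref{theo:AlgAQFTfunctors}, it should send a $\perp$-commutative functor $\mathfrak{A}:\CC\to\Mon(\MM)$ to the functor $c\mapsto J\mathfrak{A}(c)$ equipped with the \emph{opposite} multiplication induced by $J\mu_c^\op$ and unit induced by $J\eta_c$ and $J_0$, while $j_\O$ transports to the componentwise $j$.

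The crux, and the step I expect to be the main obstacle, is to verify that the order-reversal permutation $\rho_{\vert\und{c}\vert}$ built into the $\ast$-operad structure \eqref{eqn:AQFTast} is precisely what produces the opposite multiplication. Concretely, under the isomorphism of Theorem~\ref{theo:AlgAQFTfunctors} the multiplication $\mu_c$ on $\mathfrak{A}(c)$ is the action of the binary operadic generator $[e,(\id_c,\id_c)]\in\O_{(\CC,\perp)}\big(\substack{c\\ (c,c)}\big)$, and the $\ast$-involution \eqref{eqn:AQFTast} sends this class to $[\rho_{2}\,e,(\id_c,\id_c)]=[\rho_{2},(\id_c,\id_c)]$, whose action is $\mu_c$ precomposed with the flip, i.e.\ $\mu_c^\op$. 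I would carry out this tracking of generators carefully, checking that it is compatible with the symmetric-group equivariance of Definition~\ref{defi:AQFToperad}~(b) and with the operadic composition, so that the transported $J_\O$ really is the pointwise conjugation by $J$ combined with reversal of multiplication; this is exactly the content recorded in condition~(1) of the discussion preceding the proposition.

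With the transported structure understood, the identification becomes routine. A $\ast$-object for this involutive structure is a $\CC$-indexed family $\ast_c:\mathfrak{A}(c)\to J\mathfrak{A}(c)$ forming an $\Alg(\O_{(\CC,\perp)})$-morphism into $J_\O\mathfrak{A}$ and squaring to the identity via $j$; unpacking these axioms reproduces precisely the conditions \eqref{eqn:reversingastMonoid} and \eqref{eqn:functorialityastMonoid}. Condition \eqref{eqn:reversingastMonoid} states that each $(\mathfrak{A}(c),\mu_c,\eta_c,\ast_c)$ lies in $\astMon_{\mathrm{rev}}(\MM,J,j)=\astAlg(\O_{(\{\bullet\},\emptyset)})$ (cf.\ Example~\ref{ex:reversal}), whereas \eqref{eqn:functorialityastMonoid} says exactly that each $\mathfrak{A}(f)$ is a morphism of order-reversing $\ast$-monoids; hence $\mathfrak{A}$ lifts to a functor $\CC\to\astMon_{\mathrm{rev}}(\MM,J,j)$. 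Since the $\perp$-commutativity diagrams \eqref{eqn:perpcommutativity} only involve the underlying monoid structure, which is left untouched by the $\ast$-involution, $\perp$-commutativity is automatically preserved, so $\mathfrak{A}$ is an object of $\astMon_{\mathrm{rev}}(\MM,J,j)^{(\CC,\perp)}$. Finally I would check that a $\ast$-algebra morphism (Remark~\ref{rem:astAlgexplicit}) is the same data as a natural transformation whose components are $\ast$-monoid morphisms, matching the morphisms on the right-hand side; as every identification in this chain is strict, the argument yields the claimed isomorphism of categories rather than a mere equivalence.
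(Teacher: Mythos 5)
Your proposal is correct and follows essentially the same route as the paper: the paper's proof simply invokes Theorem \ref{theo:AlgAQFTfunctors} together with the conditions \eqref{eqn:reversingastMonoid} and \eqref{eqn:functorialityastMonoid}, which are derived in the text immediately preceding the proposition by unpacking Remark \ref{rem:astAlgexplicit}~(3) exactly as you describe. Your explicit tracking of the binary generator $[e,(\id_c,\id_c)]$ through the $\ast$-involution \eqref{eqn:AQFTast} to see where the opposite multiplication comes from is just a more detailed account of the step the paper leaves implicit.
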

\begin{proof}
This is an immediate consequence of Theorem \ref{theo:AlgAQFTfunctors}
together with \eqref{eqn:reversingastMonoid} and \eqref{eqn:functorialityastMonoid}.
\end{proof}

\begin{ex}
Applying this result to Example \ref{ex:AQFTs}, we observe that
the category of $\ast$-algebras over the colored $\ast$-operad
$\O_{(\Loc,\perp)}$ is the category of locally covariant algebraic quantum field
theories endowed with $\ast$-involutions, $\astAlg(\O_{(\Loc,\perp)})\cong \ast\text{-}\mathbf{QFT}(\Loc)$.
The order-reversing nature of the $\ast$-involutions is precisely what is needed in quantum 
physics \cite{HaagKastler,Brunetti}.
In complete analogy, the category of $\ast$-algebras over the colored $\ast$-operad
$\O_{(\Loc/M,\perp_M)}$ is the category of Haag-Kastler type algebraic quantum field
theories on the Lorentzian manifold $M$ endowed with $\ast$-involutions, 
$\astAlg(\O_{(\Loc/M,\perp_M)})\cong \ast\text{-}\mathbf{QFT}(M)$.
\end{ex}

We conclude this section with some further remarks on
constructions and results that are of interest in quantum field theory.

\paragraph*{Change of orthogonal category adjunctions:}
The assignment $(\CC,\perp)\mapsto \O_{(\CC,\perp)}$ of our colored
$\ast$-operads is functorial
\begin{flalign}
\O \,:\, \OCat ~\longrightarrow ~ \astOp(\MM,J,j)
\end{flalign}
on the category of orthogonal categories, where a morphism
$F : (\CC,\perp)\to (\CC^\prime,\perp^\prime)$ is a functor
preserving the orthogonality relations in the sense of 
$F(\perp)\subseteq {\perp}^\prime$.
Together with Theorem \ref{theo:adjunctionastAlg},
this implies
\begin{cor}\label{cor:astAlgadjunction}
Associated to every $\OCat$-morphism $F : (\CC,\perp)\to (\CC^\prime,\perp^\prime)$
there is an adjunction
\begin{flalign}
\xymatrix{
{\O_F}_!\,:\, \astAlg(\O_{(\CC,\perp)}) ~\ar@<0.5ex>[r] 
& \ar@<0.5ex>[l]~ \astAlg(\O_{(\CC^\prime,\perp^\prime)}) \,:\, {\O_F}^\ast
}\quad.
\end{flalign}
\end{cor}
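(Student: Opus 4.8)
The plan is to reduce the statement to a one-step application of Theorem \ref{theo:adjunctionastAlg}, feeding it the $\astOp(\MM,J,j)$-morphism produced by the functor $\O : \OCat \to \astOp(\MM,J,j)$ recalled just above. First I would unwind what $\O$ does on morphisms. By Definition \ref{def:astOpcolor}, a morphism in $\astOp(\MM,J,j)$ is a pair $(f,\phi)$ consisting of a map of color sets together with a compatible morphism of colored $\ast$-operads. Applying $\O$ to the given $\OCat$-morphism $F : (\CC,\perp)\to(\CC^\prime,\perp^\prime)$ thus yields an $\astOp(\MM,J,j)$-morphism
\begin{flalign}
\O_F := \O(F) = (f,\phi) \,:\, \big(\CC_0,\O_{(\CC,\perp)}\big) ~\longrightarrow~ \big(\CC^\prime_0,\O_{(\CC^\prime,\perp^\prime)}\big) \quad,
\end{flalign}
whose color component $f : \CC_0\to\CC^\prime_0$ is the object map of the functor $F$ and whose second component $\phi : \O_{(\CC,\perp)}\to f^\ast\O_{(\CC^\prime,\perp^\prime)}$ is the induced morphism of $\CC_0$-colored $\ast$-operads.

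With $\O_F = (f,\phi)$ in hand, I would simply invoke Theorem \ref{theo:adjunctionastAlg}. That theorem, applied to this $\astOp(\MM,J,j)$-morphism, directly provides the adjunction \eqref{eqn:changeastOpastAlg}
\begin{flalign}
\xymatrix{
(f,\phi)_!\,:\, \astAlg(\O_{(\CC,\perp)}) ~\ar@<0.5ex>[r] & \ar@<0.5ex>[l]~ \astAlg(\O_{(\CC^\prime,\perp^\prime)}) \,:\, (f,\phi)^\ast
}
\end{flalign}
between the associated categories of $\ast$-algebras. Setting ${\O_F}_! := (f,\phi)_!$ and ${\O_F}^\ast := (f,\phi)^\ast$ then reproduces verbatim the asserted adjunction, which finishes the argument. (If desired, one can also record the underlying involutive change-of-operad adjunction \eqref{eqn:changeastOp} between the involutive categories of algebras, since Theorem \ref{theo:adjunctionastAlg} supplies both.)

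Since the substantial work has already been carried out—namely the involutive change-of-operad adjunction of Theorem \ref{theo:adjunctionastAlg}, which itself rests on Proposition \ref{propo:invadj} and the adjoint lifting theorem, together with the functoriality of $(\CC,\perp)\mapsto\O_{(\CC,\perp)}$—I expect essentially no residual obstacle. The only point meriting a line of verification is that $\O$ genuinely lands in $\astOp(\MM,J,j)$, i.e.\ that an orthogonality-preserving functor $F$ induces a morphism of colored $\ast$-operads compatible with the $\ast$-involution \eqref{eqn:AQFTast} on both $\O_{(\CC,\perp)}$ and $\O_{(\CC^\prime,\perp^\prime)}$. This compatibility is precisely the functoriality statement preceding the corollary, which we take as given; concretely it amounts to checking that the order-reversal data in \eqref{eqn:AQFTast} is preserved by the pullback morphism $\phi$, a routine equivariance check. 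I therefore anticipate the proof being a two-line application of the functor $\O$ followed by Theorem \ref{theo:adjunctionastAlg}.
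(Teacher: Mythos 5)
Your proposal matches the paper's argument exactly: the corollary is stated there as an immediate consequence of the functoriality of $(\CC,\perp)\mapsto\O_{(\CC,\perp)}$ as a functor $\O:\OCat\to\astOp(\MM,J,j)$ combined with Theorem \ref{theo:adjunctionastAlg}, which is precisely your two-step reduction. Your extra remark that one should check the induced morphism respects the order-reversing $\ast$-involutions is the right (and only) point of substance, and the paper likewise treats it as part of the asserted functoriality.
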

\begin{rem}
Such adjunctions have plenty of quantum field theoretic applications,
see e.g.\ \cite{BeniniSchenkelWoike} and also \cite{BeniniDappiaggiSchenkel}
for concrete examples. The results of this section show that these adjunctions are also available in
the involutive setting, which is crucial to describe the order-reversing associative and unital $\ast$-algebras
appearing in quantum field theory.
\end{rem}

\paragraph*{States and the GNS construction:}
Building on the results in \cite{Jacobs}, we shall briefly explain
the GNS construction for order-reversing $\ast$-monoids
and $\ast$-algebraic quantum field theories with values in an arbitrary cocomplete
involutive closed symmetric monoidal category $(\MM,J,j)$.
This requires some preparatory definitions and terminology.
\begin{defi}\label{def:staterep}
\begin{itemize}
\item[(a)] A {\em state} on an order-reversing $\ast$-monoid
$(A,\mu,\eta,\ast)\in \astMon_{\mathrm{rev}}(\MM,J,j)$
is a $\astObj(\MM,J,j)$-morphism 
\begin{flalign}
\omega \,: \, \big(\ast : A\to JA\big) ~\longrightarrow ~\big(J_0 : I\to JI\big)\quad.
\end{flalign}
To simplify notation, we shall write $\omega : A\to I$ for a state on $A$.

\item[(b)] Given any object $V\in\MM$, define the following $\ast$-object structure 
\begin{flalign}
\xymatrix{
\ar[d]_-{\tau_{JV,V}} JV\otimes V \ar[rr]^-{\ast_{JV\otimes V}} && J(JV\otimes V)\\
V\otimes JV \ar[rr]_-{j_V\otimes \id}&& J^2V\otimes JV\ar[u]_-{{J_2}_{JV,V}}
}
\end{flalign}
on $JV\otimes V$. An {\em inner product space} in $(\MM,J,j)$ is a pair $(V,\langle\cdot,\cdot\rangle)$
consisting of an object $V\in\MM$ and a $\astObj(\MM,J,j)$-morphism
\begin{flalign}
\langle\cdot,\cdot\rangle \,:\, \big(\ast_{JV\otimes V} : JV\otimes V \to J(JV\otimes V)\big)~\longrightarrow~ \big(J_0:I\to JI\big)\quad.
\end{flalign}
To simplify notation, we shall write $\langle\cdot,\cdot\rangle: JV\otimes V\to I$
for an inner product.

\item[(c)] A {\em $\ast$-representation} of an order-reversing $\ast$-monoid
$(A,\mu,\eta,\ast)\in \astMon_{\mathrm{rev}}(\MM,J,j)$ on an inner product space
$(V,\langle\cdot,\cdot\rangle)$ in $(\MM,J,j)$ is  a left $(A,\mu,\eta)$-module structure
$\ell : A\otimes V\to V$ on $V$ that is compatible with the inner product, i.e.\ the diagram
\begin{flalign}
\xymatrix@C=4em{
\ar[d]_-{\tau_{JV,A}\otimes \id} JV\otimes A\otimes V \ar[rrr]^-{\id\otimes\ell}&&&JV\otimes V\ar[dd]^-{\langle\cdot,\cdot\rangle}\\
\ar[d]_-{\ast\otimes\id\otimes\id} A\otimes JV\otimes V&&&\\
JA\otimes JV\otimes V\ar[r]_-{{J_2}_{A,V}\otimes \id} & J(A\otimes V)\otimes V\ar[r]_-{J\ell \otimes \id} &JV\otimes V \ar[r]_-{\langle\cdot,\cdot\rangle} & I
}
\end{flalign}
in $\MM$ commutes.
\end{itemize}
\end{defi}

\begin{rem}
Notice that there is no concept of {\em positivity} for
a state $\omega : A\to I$ or an inner product $\langle\cdot,\cdot\rangle : JV\otimes V\to I$
in an arbitrary involutive symmetric monoidal category $(\MM,J,j)$. That is why Definition \ref{def:staterep}
does not take this property into account. For certain examples, e.g.\ the involutive
category $(\Vec_\bbC,\overline{(-)},\id_{\ID_{\Vec_{\bbC}}})$ of complex vector spaces,
one may select positive states and positive inner products by imposing additional
conditions on the states and inner product spaces in the sense of Definition \ref{def:staterep}.
Concretely, a state $\omega : A\to \bbC$ is positive if $\omega(a^\ast\,a)\geq 0$,
for all $a\in A$, and an inner product $\langle\cdot,\cdot\rangle : \overline{V}\otimes V\to \bbC$
is positive if $\langle v,v\rangle \geq 0$, for all $v\in V$.
\end{rem}

The GNS construction for order-reversing $\ast$-monoids 
in $(\MM,J,j)$ is as follows.
\begin{propo}\label{propo:GNS}
Let $\omega : A\to I$ be a state on an order-reversing $\ast$-monoid 
$(A,\mu,\eta,\ast)\in\astMon_{\mathrm{rev}}(\MM,J,j)$. Then
\begin{flalign}
\xymatrix{
JA\otimes A \ar[rr]^-{\langle\cdot,\cdot\rangle} && I\\
\ar[u]_-{\cong}^-{\ast\otimes \id} A\otimes A \ar[rr]_-{\mu}&& A \ar[u]_-{\omega}
}
\end{flalign}
defines an inner product space structure on the underlying object $A\in\MM$. Moreover,
$\ell = \mu : A\otimes A\to A$ defines a $\ast$-representation of
$(A,\mu,\eta,\ast)$ on $(A,\langle\cdot,\cdot\rangle)$.
\end{propo}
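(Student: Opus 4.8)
The plan is to check the two assertions separately: that the morphism $\langle\cdot,\cdot\rangle$ determined by the square is an inner product in the sense of Definition~\ref{def:staterep}~(b), and that $\ell=\mu$ is a $\ast$-representation in the sense of Definition~\ref{def:staterep}~(c). Both reduce to diagram chases, and I would isolate at the outset the five facts that drive them: the defining relation $\langle\cdot,\cdot\rangle\,(\ast\otimes\id)=\omega\,\mu$ (which also shows $\langle\cdot,\cdot\rangle$ is well defined, as $\ast\otimes\id$ is invertible by Remark~\ref{rem:astisiso}); the state condition $J_0\,\omega=J\omega\,\ast$; the order-reversing conditions \eqref{eqn:reversingastMonoid}; the $\ast$-object identity $J\ast\,\ast=j_A$; and the fact that $J=(J,J_2,J_0)$ is a symmetric monoidal functor, so it preserves the braiding and $J_2$, $J_0$ are isomorphisms (Lemma~\ref{lem:Jstrong}).

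For the first assertion I would establish $J_0\,\langle\cdot,\cdot\rangle=J\langle\cdot,\cdot\rangle\,\ast_{JA\otimes A}$ after precomposing both sides with the isomorphism $\ast\otimes\id$. On the left, the defining relation turns the expression into $J_0\,\omega\,\mu$, the state condition into $J\omega\,\ast\,\mu$, and the order-reversing condition \eqref{eqn:reversingastMonoid1} into $J(\omega\,\mu\,\tau_{A,A})\,{J_2}_{A,A}\,(\ast\otimes\ast)$. On the right I would first move $\ast\otimes\id$ through $\tau_{JA,A}$ by naturality of the braiding, then use $j_A=J\ast\,\ast$ to factor $j_A\otimes\ast=(J\ast\otimes\id)\,(\ast\otimes\ast)$, then naturality of $J_2$ in the form ${J_2}_{JA,A}\,(J\ast\otimes\id)=J(\ast\otimes\id)\,{J_2}_{A,A}$, and finally the defining relation again to recognise $J\langle\cdot,\cdot\rangle\,J(\ast\otimes\id)=J(\omega\,\mu)$. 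Both sides then collapse to $J(\omega\,\mu)\,{J_2}_{A,A}\,(\ast\otimes\ast)\,\tau_{A,A}$, the last reconciliation being the identity $J\tau_{A,A}\,{J_2}_{A,A}={J_2}_{A,A}\,\tau_{JA,JA}$ (preservation of the braiding by $J$) together with one more braiding-naturality square.

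For the second assertion I would first observe that $\ell=\mu$ is a left $(A,\mu,\eta)$-module by associativity and unitality of the monoid, and then check the compatibility diagram of Definition~\ref{def:staterep}~(c) by precomposing it with the isomorphism $\ast\otimes\id\otimes\id$. The defining relation and associativity reduce the upper path to $\omega\,\mu\,(\mu\otimes\id)$. For the lower path I would push $\ast\otimes\id\otimes\id$ through the braiding, assemble $\ast\otimes\ast\otimes\id$, and apply the order-reversing condition in the twisted form $J\mu\,{J_2}_{A,A}\,(\ast\otimes\ast)=\ast\,\mu^\op$, which follows from \eqref{eqn:reversingastMonoid1} using $\mu=\mu^\op\,\tau_{A,A}$ and preservation of the braiding by $J$. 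The defining relation then rewrites the lower path as $\omega\,\mu\,(\mu^\op\otimes\id)\,(\tau_{A,A}\otimes\id)$, which equals $\omega\,\mu\,(\mu\otimes\id)$ because $\mu^\op\,\tau_{A,A}=\mu$; this matches the upper path.

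The step I expect to be the main obstacle is the bookkeeping of the braiding intertwined with the order-reversal: in each part the two sides agree only after \eqref{eqn:reversingastMonoid1} is used in its twisted form, and the braidings produced in the process cancel precisely because $J$ is a \emph{symmetric} monoidal functor. Suppressing associators and unitors throughout, as elsewhere in the paper, causes no trouble here since every manipulation above is a composite of the naturality squares and defining relations just listed.
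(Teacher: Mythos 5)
Your proposal is correct and matches the paper's approach: the paper's own proof consists of the single sentence that this is an elementary diagram chase using the order-reversing property \eqref{eqn:reversingastMonoid}, and your write-up is precisely that chase carried out in full (I verified both reductions, including the twisted form $J\mu\,{J_2}_{A,A}\,(\ast\otimes\ast)=\ast\,\mu^\op$ and the cancellation of braidings via symmetry of $J$). No gaps.
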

\begin{proof}
This is an elementary diagram chase using in particular the property
\eqref{eqn:reversingastMonoid} for order-reversing $\ast$-monoids.
\end{proof}

\begin{ex}
This concept of states and $\ast$-representations generalizes immediately
to $\ast$-algebraic quantum field theory. Let $(\CC,\perp)$ be any
orthogonal category and $\AAA\in \astAlg(\O_{(\CC,\perp)})$
a $\ast$-algebra over the corresponding 
algebraic quantum field theory $\ast$-operad.
By Proposition \ref{propo:astAQFT}, we can describe $\AAA$ as
a $\perp$-commutative functor $\AAA : \CC\to\astMon_{\mathrm{rev}}(\MM,J,j)$
with values in the category of order-reversing $\ast$-monoids.
The usual concept of states considered in algebraic quantum field theory
is point-wise, see e.g.\ \cite{HaagKastler,Brunetti}. Concretely,
we define a state on $\AAA$ to be a family $\omega_c : \AAA(c)\to I$ of
states in the sense of Definition \ref{def:staterep},
for all objects $c\in\CC$, such that $\omega_{c^\prime}\,\AAA(f) = \omega_c$, 
for every $\CC$-morphism $f:c\to c^\prime$. Applying the GNS construction from
Proposition \ref{propo:GNS}, we obtain a family of inner product spaces
$(\AAA(c),\langle\cdot,\cdot\rangle_c)$ and a family of
$\ast$-representations that are functorial in $c$. In case $\CC$ has a terminal object $t\in \CC$,
e.g.\ $\CC=\Loc/M$ from Example \ref{ex:LocOCAT}, then every choice of
state $\omega_t : \AAA(t)\to I$ on the corresponding order-reversing $\ast$-monoid
defines a state on $\AAA$ via pullback $\omega_c := \omega_t \,\AAA(\exists! : c\to t)$
along the unique $\CC$-morphism to $t$. The GNS representation for $\omega_t:\AAA(t)\to I$
then defines a $\ast$-representation of $\AAA$ on a common inner product
space $(\AAA(t),\langle\cdot,\cdot\rangle_t)$. Such $\ast$-representations
are typically used for Haag-Kastler type algebraic 
quantum field theories on $\Loc/M$, cf.\ \cite{HaagKastler}.
\end{ex}

\paragraph*{$\E_\infty$-resolution and homotopy algebraic quantum field theories:}
The results of this section generalize to {\em homotopy algebraic quantum field theories}
\cite{BeniniSchenkelWoikehomotopy}. These are homotopy algebras over the colored operad
$\O_{(\CC,\perp)}$ in the symmetric monoidal {\em model} category $\Ch_\bbC$
of chain complexes of complex vector spaces. Concretely, we shall discuss the 
$\Sigma$-cofibrant resolution $w: \O_{(\CC,\perp)} \otimes \E_\infty \to \O_{(\CC,\perp)}$ 
obtained by the component-wise tensoring of the colored operad $\O_{(\CC,\perp)}$ 
and the Barratt-Eccles operad $\E_\infty$ from \cite{BergerFresse}. 
Algebras over the colored operad $\O_{(\CC,\perp)} \otimes \E_\infty $ 
play a prominent role in formalizing quantum gauge theories,
see \cite{BeniniSchenkelWoikehomotopy} for details. 
\sk

As a first step, we shall equip the {\em simplicial} Barratt-Eccles operad 
$\E_\infty^\sSet$ with a $\ast$-structure. Transfer along the 
normalized chains functor $N_\ast: \sSet \to \Ch_\bbC$ then will define a
$\ast$-structure on the operad $\E_\infty = N_\ast(\E_\infty^\sSet)$ in $\Ch_\bbC$.
Recall from e.g.\ \cite{BergerFresse} that the
simplicial set of $n$-ary operations in $\E_\infty^\sSet$ 
is the nerve of the action groupoid $\Sigma_n//\Sigma_n$. Explicitly, 
$\E_\infty^\sSet(n)_k := \Sigma_n^{\times k+1}$ 
is the set of $n$-ary operations of degree $k$. 
Consider now the trivial involutive symmetric monoidal category 
$(\sSet, \ID_\sSet, \id_{\ID_\sSet})$ of simplicial sets. 
We endow $\E_\infty^\sSet$ with a $\ast$-involution similar to that 
on the associative operad $\mathsf{As}$ in Example \ref{ex:reversal}, 
see also \eqref{eqn:AQFTast}. Explicitly, we define 
$\ast_\E: \E_\infty^\sSet \to \E_\infty^\sSet$ as the map that sends a tuple 
$(\sigma_0,\ldots,\sigma_k) \in \Sigma_n^{\times n+1}$ 
to $(\rho_n\sigma_0,\ldots,\rho_n\sigma_k) \in \Sigma_n^{\times n+1}$, 
where $\rho_n \in \Sigma_n$ is the order-reversal permutation 
from Example \ref{ex:Cprofiles}. Clearly, this provides 
a $\ast$-object structure on the underlying symmetric sequence, 
whose compatibility with the operadic composition and unit follows 
from elementary properties of the permutation group. 
\sk

Consider now the involutive symmetric monoidal category 
$(\Ch_\bbC,\overline{(-)},\id_{\ID_{\Ch_\bbC}})$ of chain complexes of complex vector spaces, 
obtained similarly to Examples \ref{ex:vec} and \ref{ex:vec3}. 
We equip the symmetric monoidal normalized chains functor 
$N_\ast: \sSet \to \Ch_\bbC$ with the structure 
of an involutive symmetric monoidal functor 
$(N_\ast,\nu): (\sSet,\ID_\sSet,\id_{\ID_\sSet}) \to 
(\Ch_\bbC,\overline{(-)},\id_{\ID_{\Ch_\bbC}})$ by declaring 
$\nu_X: N_\ast(X) \to \overline{N_\ast(X)}$ to 
act by complex conjugation on $\bbC$-valued chains in a simplicial set $X$. 
We define the Barratt-Eccles $\ast$-operad $\E_\infty$ in $\Ch_\bbC$
by applying the involutive symmetric monoidal functor $(N_\ast,\nu)$
to the $\ast$-operad $\E_\infty^\sSet$ in simplicial sets. 
Combining this with the colored $\ast$-operad structure from Proposition 
\ref{propo:AQFTastOp}, one immediately obtains the following result.
\begin{propo}
The component-wise tensor product of the $\ast$-involutions on $\O_{(\CC,\perp)}$ 
and $\E_\infty$ defines a colored $\ast$-operad structure on $\O_{(\CC,\perp)} \otimes \E_\infty $.
\end{propo}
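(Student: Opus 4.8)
The plan is to sidestep an explicit diagram chase by passing to the category of $\ast$-objects and invoking Proposition~\ref{propo:OpastObj}. By that result, a $\CC_0$-colored $\ast$-operad in $(\Ch_\bbC,\overline{(-)},\id_{\ID_{\Ch_\bbC}})$ is the same datum as a $\CC_0$-colored operad valued in $\astObj(\Ch_\bbC,\overline{(-)},\id_{\ID_{\Ch_\bbC}})$, which is a cocomplete closed symmetric monoidal category by Proposition~\ref{propo:astObjmonoidal} and Remark~\ref{rem:astObjlim}. Under this correspondence, the $\ast$-operad $\O_{(\CC,\perp)}$ of Proposition~\ref{propo:AQFTastOp} becomes a $\CC_0$-colored operad $\widetilde{\O}$ whose arity component at $\big(\substack{t\\\und{c}}\big)$ is the chain complex $\O_{(\CC,\perp)}\big(\substack{t\\\und{c}}\big)$ equipped with the $\ast$-object structure \eqref{eqn:AQFTast}, and the Barratt-Eccles $\ast$-operad becomes a one-colored operad $\widetilde{\E}$ in the same category.

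First I would recall that the component-wise (Hadamard) tensor product of a colored operad with a one-colored operad is a construction available in any symmetric monoidal category $\NN$: given a $\CC_0$-colored operad $P$ and a one-colored operad $Q$ in $\NN$, one forms the $\CC_0$-colored operad $P\otimes Q$ with arity components $P\big(\substack{t\\\und{c}}\big)\otimes Q(\vert\und{c}\vert)$, diagonal symmetric group action, and operadic composition and unit assembled from those of $P$ and $Q$, the braiding of $\NN$ being used to interleave the two families of tensor factors. Applying this in $\NN=\astObj(\Ch_\bbC,\overline{(-)},\id_{\ID_{\Ch_\bbC}})$ to $\widetilde{\O}$ and $\widetilde{\E}$ produces a $\CC_0$-colored operad $\widetilde{\O}\otimes\widetilde{\E}$ in $\astObj(\Ch_\bbC,\overline{(-)},\id_{\ID_{\Ch_\bbC}})$, which by Proposition~\ref{propo:OpastObj} is exactly a colored $\ast$-operad.

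It then remains to identify this object with the one in the statement. Using the monoidal isomorphism of Proposition~\ref{propo:astObjSymSeq}, tensor products of $\ast$-objects in $\SymSeq_{\CC_0}(\Ch_\bbC)$ are computed component-wise in $\astObj(\Ch_\bbC,\overline{(-)},\id_{\ID_{\Ch_\bbC}})$. Hence the forgetful functor sends $\widetilde{\O}\otimes\widetilde{\E}$ to the component-wise tensor product operad $\O_{(\CC,\perp)}\otimes\E_\infty$ in $\Ch_\bbC$, while its $\ast$-involution is precisely the component-wise tensor product $\ast_\O\otimes\ast_\E$ of the two given $\ast$-involutions, the factors being combined by the monoidal structure $J_2$ of $\overline{(-)}$. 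This is the asserted $\ast$-operad structure.

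The step requiring the most care is this last identification, where the abstract Hadamard product in $\astObj(\Ch_\bbC,\overline{(-)},\id_{\ID_{\Ch_\bbC}})$ must be matched with the concrete component-wise tensor of involutions. Two points need checking: that the braiding used to interleave tensor factors in the operadic composition of $\widetilde{\O}\otimes\widetilde{\E}$ is the one inherited on $\astObj(\Ch_\bbC,\overline{(-)},\id_{\ID_{\Ch_\bbC}})$ from the flip on $\Ch_\bbC$, so that no spurious conjugation enters; and that the $\ast$-structure on each arity component agrees with $\ast_\O\otimes\ast_\E$ under Proposition~\ref{propo:astObjSymSeq}. Both follow from the symmetry of the monoidal coherence data of $\overline{(-)}$, and granting them the conditions (1)--(3) of Remark~\ref{rem:astOp} hold automatically because each tensor factor already satisfies them. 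Alternatively one could avoid the abstract route and verify conditions (1)--(3) of Remark~\ref{rem:astOp} directly, reducing each compatibility for $\O_{(\CC,\perp)}\otimes\E_\infty$ to the corresponding compatibility for $\O_{(\CC,\perp)}$ and for $\E_\infty$ separately; this is more laborious but equally routine.
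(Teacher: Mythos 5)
Your proposal is correct. The paper itself offers no argument beyond asserting that the result is immediate, so the implicit intended proof is presumably the direct verification of conditions (1)--(3) of Remark \ref{rem:astOp} that you mention as the alternative at the end; your primary route is genuinely different and more structural. By transporting everything through the isomorphism of Proposition \ref{propo:OpastObj} you reduce the claim to the existence of the Hadamard (component-wise) tensor product of operads in the cocomplete closed symmetric monoidal category $\astObj(\Ch_\bbC,\overline{(-)},\id_{\ID_{\Ch_\bbC}})$, which is a purely classical fact; the compatibility of the $\ast$-involution with the operadic composition and unit then comes for free rather than being checked by hand. The price is the bookkeeping you correctly flag: one must confirm that the braiding of $\astObj(\Ch_\bbC,\overline{(-)},\id_{\ID_{\Ch_\bbC}})$ has as underlying morphism the braiding of $\Ch_\bbC$ (it does, since it is obtained by applying the $2$-functor $\astObj$ to the braiding of $(\Ch_\bbC,\overline{(-)},\id_{\ID_{\Ch_\bbC}})$ viewed as an involutive natural transformation, which does not alter underlying morphisms, so no spurious conjugation or sign enters), and that under Proposition \ref{propo:astObjSymSeq} the induced $\ast$-structure on each arity component is $J_2\,(\ast_{\O}\otimes\ast_{\E})$, i.e.\ exactly the component-wise tensor product of the two involutions. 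Both checks are as routine as you say, and this is precisely the kind of transfer of classical operadic constructions that Remark \ref{rem:OpastObj} advocates; the direct diagram chase buys explicitness, while your route buys conceptual economy and immediate reusability for other Hadamard-type products of $\ast$-operads.
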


\begin{rem}
Similarly to Remark \ref{rem:revMon}, the $\ast$-involution
on the Barratt-Eccles operad $\E_\infty$ considered above is not the only one. 
For example, one could replace order-reversal permutations 
by identity permutations. Our choice is motivated by the fact that
every $\ast$-algebra over $\E_\infty$ (in our sense) 
has an underlying order-reversing differential graded $\ast$-algebra.
This is a consequence of the evident $\ast$-operad
inclusion $\mathsf{As}\to \E_\infty$, where $\mathsf{As}$ carries the 
order-reversing $\ast$-structure from Example \ref{ex:reversal}.
\end{rem}


\section*{Acknowledgments}
We thank the anonymous referees for useful comments that helped us to improve this manuscript.
We also would like to thank John Barrett and 
Christoph Schweigert for useful discussions and comments.
The work of M.B.\ is supported by a research grant funded by 
the Deutsche Forschungsgemeinschaft (DFG, Germany). 
A.S.\ gratefully acknowledges the financial support of 
the Royal Society (UK) through a Royal Society University 
Research Fellowship, a Research Grant and an Enhancement Award. 
L.W.\ is supported by the RTG 1670 ``Mathematics inspired 
by String Theory and Quantum Field Theory''.


\end{document}